\newtheorem{assumption}{Assumption A.\!\!}
\newcommand{\Id}{\mathbb{I}}
\newcommand{\Tc}{\mathcal{T}}
\newcommand{\R}{\mathbb{R}}
\newcommand{\Rext}{\R\cup\{+\infty\}}
\newcommand{\abs}[1]{\left\vert#1\right\vert}
\newcommand{\absc}[1]{\big\vert#1\big\vert}
\newcommand{\set}[1]{\left\{#1\right\}}
\newcommand{\sets}[1]{\{#1\}}
\newcommand{\norm}[1]{\left\Vert#1\right\Vert}
\newcommand{\tnorm}[1]{\vert\!\Vert#1\vert\!\Vert}
\newcommand{\norms}[1]{\Vert#1\Vert}
\newcommand{\Eproof}{\hfill $\square$}
\newcommand{\prox}{\mathrm{prox}}
\newcommand{\relint}[1]{\mathrm{ri}\left(#1\right)}
\newcommand{\argmin}{\mathrm{arg}\!\displaystyle\min}
\newcommand{\dom}[1]{\mathrm{dom}(#1)}
\newcommand{\zero}[1]{\boldsymbol{0}}
\newcommand{\cl}[1]{\mathrm{cl}\left(#1\right)}
\newcommand{\xb}{x}
\newcommand{\ub}{u}
\newcommand{\vb}{v}
\newcommand{\Qc}{\mathcal{Q}}
\newcommand{\Xc}{\mathcal{X}}
\newcommand{\Sc}{\mathcal{S}}
\newcommand{\Dc}{\mathcal{D}}
\newcommand{\Ec}{\mathcal{E}}
\newcommand{\Uc}{\mathcal{U}}
\newcommand{\Pc}{\mathcal{P}}
\newcommand{\iprod}[1]{\left\langle #1\right\rangle}
\newcommand{\iprods}[1]{\langle #1\rangle}
\newcommand{\intx}[1]{\mathrm{int}\left(#1\right)}
\newcommand{\BigO}[1]{\mathcal{O}\left(#1\right)}
\newcommand{\rvnn}[1]{{#1}}
\newcommand{\beforesec}{\vspace{-3.5ex}}
\newcommand{\aftersec}{\vspace{-2.25ex}}
\newcommand{\beforesubsec}{\vspace{-4.5ex}}
\newcommand{\aftersubsec}{\vspace{-2.5ex}}
\newcommand{\beforesubsubsec}{\vspace{-3.0ex}}
\newcommand{\aftersubsubsec}{\vspace{-2.5ex}}
\newcommand{\beforeparagraph}{\vspace{-2.5ex}}
\journalname{Comput Optim Appl}
\begin{document}

\title{Composite Convex Optimization with Global and Local Inexact Oracles
\thanks{
The work of Q. Tran-Dinh  was supported by the National Science Foundation (NSF), grant: DMS-1619884, and the Office of Naval Research (ONR), grant: N00014-20-1-2088 (2020-2023).
The work of I. Necoara was supported by the Executive Agency for Higher Education, Research and Innovation Funding (UEFISCDI), Romania,  PNIII-P4-PCE-2016-0731, project ScaleFreeNet, no. 39/2017.
}
}

\titlerunning{Composite Convex Minimization with New Inexact Oracles}        
\authorrunning{T. Sun, I. Necoara, \textit{and} Q. Tran-Dinh}

\author{Tianxiao Sun$^{\dagger}$ \and Ion Necoara$^{\ddagger}$ \and Quoc Tran-Dinh$^{\dagger\ast}$
}


\institute{$^{\dagger}$Tianxiao Sun \and $^{\dagger}$Quoc Tran-Dinh \at
		Department of Statistics and Operations Research,  The University of North Carolina at Chapel Hill\\
		333 Hanes Hall, CB\# 3260, UNC Chapel Hill, NC 27599-3260, USA\\ 
		Email: \url{quoctd@email.unc.edu}
		\and
		$^{\ddagger}$Ion Necoara \at Department of Automatic Control and Systems Engineering, University Politehnica Bucharest\\ 
		Spl. Independentei 313, Bucharest, 060042, Romania\\ 
		Email: \url{ ion.necoara@acse.pub.ro}
		\and
		$^{\ast}$\textit{Corresponding author} (\email{\tt quoctd@email.unc.edu}).
}

\date{First version: 08/06/2018 (on Arxiv). Third version: 02/22/2020.}

\maketitle

\begin{abstract}
We introduce new  global and local  inexact oracle concepts for a wide class of convex functions in composite convex minimization.  
Such  inexact oracles naturally arise in many situations, including primal-dual frameworks, barrier smoothing, and inexact evaluations of gradients and Hessians. 
We also provide examples showing that the class of convex functions equipped with the newly inexact oracles is larger than standard self-concordant and  Lipschitz gradient  function classes.  
Further, we investigate several properties of convex and/or self-concordant functions under our inexact oracles which are useful for algorithmic development.  
Next, we apply our theory to develop inexact proximal Newton-type schemes for minimizing  general composite convex optimization problems  equipped with such inexact oracles. 
Our theoretical results consist of new optimization algorithms accompanied with global convergence guarantees to solve a wide class of composite convex optimization problems. 
When the first objective term is additionally  self-concordant,  we establish different local convergence results for our method.  
In particular, we prove that depending on the choice of accuracy levels of the inexact second-order oracles, we obtain different local convergence rates ranging from linear and superlinear to quadratic.  In special cases, where convergence bounds are known, our theory recovers the best known rates. We also apply our settings to derive a new primal-dual method for composite convex minimization problems involving linear operators.   Finally, we present some representative numerical examples to illustrate the benefit of the new algorithms. 

\keywords{
Self-concordant functions \and
composite convex minimization \and
local and global inexact oracles \and
inexact proximal Newton-type method \and
primal-dual second-order method.
}

\subclass{90C25   \and 90-08}
\end{abstract}

\beforesec
\section{Introduction}\label{sec:intro}
\aftersec
We consider the following composite convex optimization problem:
\begin{equation}\label{eq:c_sc_min}
F^{\star} = \min_{x\in\R^p} \Big\{ F(x) := f(x) + R(x) \Big\},
\end{equation}
where $f$ and $R$ are proper, closed, and convex from $\R^p \to \Rext$.
It is well-known that problem \eqref{eq:c_sc_min} covers various applications in machine learning, statistics, signal and image processing, and control. 
Very often in applications, $f$ can be considered as a loss or a data fidelity term, while $R$ is referred to as a regularizer that can promote desired structures of the solution. 
In particular, if $R$ is the indicator of a convex set $\Xc$, then  \eqref{eq:c_sc_min} also covers constrained settings.

Optimization methods for solving \eqref{eq:c_sc_min} often rely on a so-called \textit{oracle} \cite{Nemirovskii1983} to query information about proximal operators, function values, [sub]gradients, and Hessians of the objective functions in \eqref{eq:c_sc_min} for computing  an approximate solution.
However, such an oracle may not be available in practice, but only  approximate information can be accessed.
This paper is concerned with inexact oracles to design numerical methods for solving \eqref{eq:c_sc_min}.
We first focus on a relatively general convex setting of  \eqref{eq:c_sc_min} by equipping $f$ with a global inexact oracle. 
Then, we limit our consideration to a class of self-concordant functions $f$ and introduce a local  inexact second-order oracle.

Self-concordance is a mathematical concept introduced by Y. Nesterov and A. Nemirovskii in the early 1990s to develop polynomial-time interior-point methods for convex optimization. 
This function class, which will be formally defined in Definition~\ref{de:self_con_func} below, covers many key applications such as  conic programming \cite{BenTal2001},  graphical model selection \cite{Friedman2008}, Poisson imaging \cite{Harmany2012},  logistic regression \cite{zhang2015disco}, statistical learning \cite{Ostrovskii2018}, and control \cite{Necoara2009}. 
Although several inexact first-order oracles have been proposed for the class of smooth convex functions in many settings, see, e.g.,  \cite{d2008smooth,Devolder2010,necoara2015complexity}, inexact second-order oracles for self-concordant functions have not yet been studied in the literature to the best of our knowledge. 
Note that the inexact second-order oracles we discuss here are very different from the inexact methods for self-concordant minimization, where the subproblems in an optimization routine are approximately solved \cite{Lu2016a,TranDinh2012c,TranDinh2016c,zhang2015disco}.

\beforeparagraph
\paragraph{\bf Why inexact oracle?}
Inexact oracles arise in many practical problems under different situations.
Among these, in the following two cases, inexact oracles appear to be natural:
\begin{compactitem}
\item[(i)]\textit{Accumulation of errors:}
We often encounter the accumulation of errors during evaluating, storing, and transferring data. 
This happens  frequently in sequential methods and distributed computation, where accumulation of errors clearly affects input data of the underlying optimization problem or communication. Another situation is where we truncate the output of an evaluation to fit a storage device or approximate our computation to reduce execution time as well as memory storage.

\item[(ii)]\textit{Inexact evaluation:}
Inexact evaluations of function values and its proximal operator and derivatives arise in many  optimization algorithms. For instance, in a primal-dual method where the primal subproblem is approximately solved and thus we cannot evaluate the true function values and derivatives of the corresponding dual function. It also arises when we evaluate the functions and derivatives through its Fenchel conjugate or through smoothing techniques. These cases also lead to inexact oracles of the underlying functions.

\end{compactitem}
As the examples of Section \ref{sec:examples_of_inexact_oracle} will show, the class of functions equipped with inexact second-order  oracles  is rather large, covering convex  (non)smooth   functions and self-concordant functions.

\beforeparagraph
\paragraph{\bf Related work:}
Inexact oracles have been widely studied for smooth convex optimization in first-order methods, see, e.g.,  \cite{d2008smooth,Devolder2010,dvurechensky2016stochastic,necoara2015complexity}. Among these frameworks, \cite{Devolder2010} provides a general inexact first-order oracle  capturing  a wide class of objective functions, including nonsmooth functions, and covering many other existing inexact first-order oracles as special cases. However, \cite{Devolder2010} only studied a global first-order inexact oracle to  analyze the behavior of  first-order methods of smooth convex optimization. Such an oracle cannot be used  to study the local behavior of second-order methods, in particular, for self-concordant functions.  Moreover, in second-order methods, quasi-Newton algorithms are usually approximating the Hessian mapping  via secant equations \cite{gao2016quasi,Nocedal2006}. We show in this paper that this setup can also be cast into our  Newton-type methods with inexact oracles. Furthermore, inexact methods for self-concordant minimization, where the subproblems in an optimization routine are approximately solved \cite{li2017inexact,Lu2016a,TranDinh2012c,TranDinh2016c,zhang2015disco}, are also covered by our inexact oracle algorithmic framework. Alternative to deterministic inexact oracles, stochastic gradient-type schemes can be also viewed as optimization methods with inexact oracles \cite{Shapiro2009}. Function values and gradients are approximated by a stochastic sampling scheme to obtain also inexact oracles.  Finally, derivative-free optimization can be considered as optimization methods with inexact oracles \cite{bogolubsky2016learning,Conn2008}.

\beforeparagraph
\paragraph{\bf Our approach and contribution:}
Our approach, inspired by \cite{Devolder2010},  essentially introduces new global and local inexact second-order oracles and develops some key bounds to design inexact optimization algorithms for solving \eqref{eq:c_sc_min}. 
While \cite{Devolder2010} aimed at developing first-order methods, we focus on second-order methods, which not only require inexact function values and gradients, but also inexact Hessian mappings. 
Moreover, since we design Newton-type methods, we first introduce a global inexact second-order oracle to investigate the global behavior of the proposed algorithms. 
Then, we also define a local inexact second-order oracle to study the local convergence of our second-order methods.
In contrast to \cite{Devolder2010}, where the accuracy level is fixed at the target accuracy, our methods can either fix the accuracies  of the oracles or adapt them in such a way that they can be rough in the early iterations and become finer in the latter iterations.

\beforeparagraph
\paragraph{Our contribution}
To this end, our main contribution can be summarized as follows:
\begin{compactitem}
\item[(a)]
We introduce new global and local inexact second-order oracles for a large class of convex functions. 
The global inexact oracle covers a wide range  of convex functions including smooth convex functions with Lipschitz gradient continuity,  nonsmooth Lipschitz continuous  convex functions with bounded domain, and self-concordant convex functions.  
For the local inexact oracle, we limit our consideration to the class of self-concordant functions.  
Relying on these global and local inexact oracles, we develop several key properties that are useful for algorithmic development.

\item[(b)]
We develop a proximal Newton algorithm based on inexact oracles and approximate computations of the proximal Newton directions to solve the composite convex minimization problem \eqref{eq:c_sc_min}. 
Our global inexact oracle allows us to prove  global convergence guarantees for the proposed proximal Newton method. 
When limited to the self-concordant class for $f$, by using the new local inexact oracle,
we show how to adapt the inner accuracy parameters  of the oracles so that our algorithm still enjoys a global convergence guarantee, while having either linear, superlinear, or quadratic local convergence rate. 
Our new convergence analysis requires nontrivial technical steps and novel tools such as new local norms, inequalities, and bounds for inexact quantities compared to the analysis in the exact oracle case \cite{Nesterov1994}.

\item[(c)] 
Finally, we customize our method to handle a class of convex programs in a primal-dual setting, where our method is applied to solve the dual problem.
This particular application provides a new primal-dual method for handling some classes of convex optimization problems including constrained formulations and linear compositions.

\end{compactitem}
Let us emphasize the following points of our contributions.
Firstly, our global inexact second-order oracle  is defined via a weighted local norm and  via a non-quadratic term and thus very different from the inexact first-order oracle in  \cite{Devolder2010}. 
Secondly, our global convergence result is independent of the self-concordance of $f$. 
This global convergence result holds for a large class of functions, including Lipschitz gradient convex functions analyzed in \cite{Devolder2010}. 
Thirdly, our inexact algorithm covers the inexact Newton  methods from \cite{li2017inexact,Lu2016a,TranDinh2012c,zhang2015disco} and quasi-Newton methods developed in \cite{gao2016quasi,TranDinh2016c} as special cases   (see Subsection \ref{subsec:special_cases}). In these cases, where convergence bounds are known, our theory recovers the best known rates.   Finally, we strongly believe that our theory can be used to further develop other methods such as sub-sampled Newton-type methods rather than just the inexact proximal Newton method as in this paper.

\beforeparagraph
\paragraph{\bf Paper organization:}
The rest of this paper is organized as follows.
Section \ref{eq:inexact_oracle} recalls the concepts of self-concordant functions and self-concordant barriers from \cite{Nesterov1994,Nesterov2004}.
We also introduce the concept of global and local inexact oracles in this section.
Section \ref{sec:examples_of_inexact_oracle} presents several examples of inexact oracles.
Section \ref{sec:proximal_nt_methods} develops proximal Newton-type methods using inexact oracles. We show that the obtained algorithms achieve both global convergence and local convergence from linear to quadratic rates. We also show that our methods cover some existing inexact methods in the literature as special cases. Section \ref{sec:primal_dual_method} shows an application to primal-dual methods, and the last section provides some representative examples to illustrate the theory.


\beforesec
\section{Global and Local  Inexact Oracles}
\label{eq:inexact_oracle}
\aftersec
First, we introduce a global and a local inexact oracle concept  for a class of convex functions. 
Then, utilizing these new notions, we develop several properties for this function class that can be useful for algorithmic development. 

\beforesubsec
\subsection{\bf Basic notations and terminologies}\label{subsec:notation}
\aftersubsec
Let $\iprods{\ub, \vb}$ or $\ub^{\top}\vb$ denote standard inner product, and $\Vert \ub\Vert_2$ denote the Euclidean norm for any $\ub,\vb\in\R^p$.
For a nonempty, closed, and convex set $\Xc$ in $\R^p$, $\mathrm{ri}(\Xc)$ denotes the relative interior of $\Xc$ and $\mathrm{int}(\Xc)$ stands for the interior of $\Xc$.
For a proper, closed, and convex function $f : \R^p\to\Rext$,  $\dom{f} := \set{ x \in \R^p \mid f(x) < + \infty}$ denotes its domain, $\cl{\dom{f}}$ denotes the closure of $\dom{f}$,  $\partial{f}( x) := \big\{ w \in\R^p \mid f(\ub) \geq f( x) + \iprods{w, u -  x}, ~\forall \ub \in\dom{f} \big\}$ denotes its subdifferential at $ x$, and $f^{*}(y) := \sup_{x}\set{\iprods{x, y} - f(x)}$ denotes its Fenchel conjugate \cite{Rockafellar1970}.

We also use $\mathcal{C}^3(\mathcal{X})$ to denote the class of three-time continuously differentiable functions from $\mathcal{X}\subseteq\mathbb{R}^p$ to $\mathbb{R}$.
$\Sc^p_{+}$ stands for the symmetric positive semidefinite cone of order $p$, and $\Sc^p_{++}$ is its interior, i.e., $\Sc_{++}^p = \intx{\Sc_{+}^p}$.
For a three-time continuously differentiable and convex function $f : \R^p\to\Rext$ such that $\nabla^2{f}( x) \succ 0 $ at some $ x\in\dom{f}$ (i.e., $\nabla^2{f}( x)$ is symmetric positive definite), we define a local norm, and its corresponding dual norm, respectively as
\begin{equation}\label{eq:local_norm}
\norm{\ub}_{ x} := \iprods{\nabla^2{f}( x)\ub, \ub}^{1/2} ~~~\text{and}~~~ \norm{\vb}_{ x}^{\ast} := \iprods{\nabla^2{f}( x)^{-1}\vb, \vb}^{1/2},
\end{equation}
for given $\ub, \vb\in\R^p$.
It is obvious that $\iprods{\ub, \vb} \leq \norm{\ub}_{ x}\norm{\vb}^{\ast}_{ x}$ due to the Cauchy-Schwartz inequality.
Let $H(x) \in \Sc^p_{++}$ be an approximation of $\nabla^2{f}(x)$ at $x\in\dom{f}$.
We define the following weighted norm and its dual norm for any $u$ and $v$ as:
\begin{equation}\label{eq:weighted_norm}
\tnorm{u}_x := \norm{u}_{H(x)} = (u^{\top}H(x)u)^{1/2}~~~~\text{and}~~~~\tnorm{v}_x^{\ast} := \norms{v}_{H(x)^{-1}} = (v^{\top}H(x)^{-1}v)^{1/2}.
\end{equation}
We still have the relation $\iprods{u,v} \leq \tnorm{u}_x\tnorm{v}_x^{\ast}$.

We will repeatedly use the following two strictly univariate convex functions: 
\begin{compactitem}
\item[$\bullet$] $\omega : \R_{+}\to \R_{+}$ with $\omega(\tau) := \tau - \ln(1+\tau)$;
\item[$\bullet$] $\omega_{\ast} : [0, 1) \to \R_{+}$ with $\omega_{\ast}(\tau) := -\tau - \ln(1-\tau)$.
\end{compactitem}
As shown, e.g.,  in \cite{Nesterov1994,Nesterov2004} that $\omega(\tau) \geq \frac{\tau^2}{2(1+\tau)}$ for all $\tau \geq 0$ and $\omega_{\ast}(\tau) \leq \frac{\tau^2}{2(1-\tau)}$ for all $\tau \in [0, 1)$.
It is also easy to prove that $\omega_{\ast}(\tau) \geq \frac{\tau^2}{2}$ for all $\tau \in [0, 1)$.

\beforesubsec
\subsection{\bf Self-concordant functions}
\label{subsec:self_concordance}
\aftersubsec
We recall the self-concordant concept introduced in \cite{Nesterov1994,Nesterov2004}.
This concept has been intensively used in interior-point methods and  has recently been used in other applications of machine learning, image processing, and control  \cite{gao2016quasi,Lu2016a,Necoara2009,Ostrovskii2018,Tran-Dinh2013a,TranDinh2016c,zhang2015disco}.

\begin{definition}\label{de:self_con_func}
\textit{
A univariate convex function $\varphi \in \mathcal{C}^3(\dom{\varphi})$ is called $M_{\varphi}$-\emph{self-concordant} if $\abs{\varphi'''(\tau)}$ $  \leq M_{\varphi}\varphi''(\tau)^{3/2}$  for all $\tau\in\dom{\varphi}$, where $\dom{\varphi}$ is an open set in $\R$ and $M_{\varphi} \geq 0$.
A function $f : \dom{f}\subseteq \R^{p} \to \R$ is $M_f$-self-concordant if for any $ x\in\dom{f}$ and $\vb\in\R^p$, the univariate function $\varphi$ defined by $\tau \mapsto \varphi(\tau) := f( x + \tau\vb)$ is $M_f$-self-concordant.
If $M_f = 2$, then we say that $f$ is standard self-concordant. }
\end{definition}

Examples of self-concordant functions include, but not limited to $f(x) = -\sum_{j=1}^p\ln(x_j)$ on $\R^p_{++}$ (see \cite[page 213]{Nesterov2004}), $f(X) = -\ln\det(X)$ on $\Sc^p_{++}$ (see \cite[page 216]{Nesterov2004}), $f(x) = \sum_{i=1}^n\ln(1 + e^{-a_i^{\top}x}) + \tfrac{\mu}{2}\norms{x}^2$ with $\mu > 0$ on $\R^p$ (see \cite[Proposition 5]{SunTran2017gsc}), and $f(x) = -\sum_{j=1}^p(x_j\ln(x_j) - \ln(x_j))$ on $\R^p_{++}$ (see \cite[page 8]{SunTran2017gsc}).
For other examples, we refer to \cite{Nesterov2004,zhang2015disco}.

\begin{remark}\label{re:standard_self_con}
If $f$ is an $M_f$-self-concordant function, then we can rescale it as $\hat{f} := \frac{M_f^2}{4}f$ such that $\hat{f}$ is standard self-concordant.
Therefore, without loss of generality, from now on, if we say ``$f$ is a self-concordant function'', then it means that $f$ is a \textit{\textbf{standard self-concordant function}}.
\end{remark}

It is known that the following two inequalities are necessary and sufficient characteristics of self-concordant functions (see Theorem 4.1.9 in \cite{Nesterov2004}):
\begin{equation}\label{eq:ns_sc}
\omega(\norm{y-x}_x)  \leq f(y) - f(x) - \iprods{\nabla f(x), y-x} \leq \omega_{\ast}(\norm{y-x}_x)  \quad \forall x,y \in \dom{f},
\end{equation}
where the right-hand side holds for $\norm{y-x}_x <1$, and  $\omega(t) = t - \ln(1+t)$ and its conjugate $\omega_{\ast}(\tau) = -\tau - \ln(1 - \tau)$ are defined above.

This equivalent characterization of self-concordant functions motivates us to introduce in the next subsection  the notion of  inexact global and local oracles,
and analyze the  behavior  of Newton-type  methods  of  standard self-concordant  optimization using such oracles.
Intensive theory of self-concordance can be found, e.g.,  in \cite{Nesterov1994,Nesterov2004}.

\beforesubsec
\subsection{\bf Inexact oracles for convex functions}
\label{subsec:inexact_oracle}
\aftersubsec
Let $f$ be a  convex function with $\dom{f} \subseteq \R^p$.
Given three mappings $\tilde{f}(\cdot) \in\R$, $g(\cdot)\in\R^p$, and $H(\cdot) \in \Sc^p_{++}$ defined on $\dom{f}$, we introduce the following two types of inexact oracle of $f$.

\begin{definition}[Global inexact oracle]\label{de:global_inexact_oracle}
\textit{
For a general convex and possibly nonsmooth function $f$, a triple $(\tilde{f}, g, H)$ is called a $(\delta_0,\delta_1)$-global inexact oracle of $f$ with accuracies $\delta_0 \in [0, 1]$ and $\delta_1 \geq 0$, if for any $x\in\dom{f}$ and $y\in\R^p$ such that $\tnorm{y-x}_x < \frac{1}{1+\delta_0}$, we have $y\in\dom{f}$,   $H(x) \succ 0$, and the following inequality holds:
\begin{equation}\label{eq:global_inexact_oracle}
\omega\left( (1-\delta_0)\tnorm{y - x}_x\right) \leq f(y) - \tilde{f}(x) -  \iprods{g(x), y-x} \leq  \omega_{\ast}\left((1+\delta_0)\tnorm{y - x}_x\right) + \delta_1.
\end{equation}
}
\end{definition}

Note that the condition ``$\tnorm{y-x}_x < \frac{1}{1+\delta_0}$ implies $y\in\dom{f}$'' is required for the right-hand side of \eqref{eq:global_inexact_oracle} to be well-defined.  
It automatically holds if $f$ is self-concordant and $H(x) = \nabla^2{f}(x)$ with $\delta_0=0$.  
If $\dom{f} = \R^p$, then this condition  also holds.  
However, when $\dom{f} \subset \R^p$ we need to impose the condition ``$\tnorm{y-x}_x < \frac{1}{1+\delta_0}$ implies $y\in\dom{f}$'' in our definition for global inexact oracle.

Note also that the inexact oracle is defined at any $x \in \dom{f}$. Hence, it is referred to as a global inexact oracle.
Here, $H(\cdot)\succ 0$ is only required for $\xb$ in some level set of $\xb^0$, which will be further discussed later. 
Moreover, it does not require the differentiability of $f$.

Nevertheless, for this inexact oracle, if $f$ is twice differentiable, then $\tilde{f}$ gives an approximation to $f$, $g$ is an approximation to $\nabla{f}$, and $H$ is an approximation to $\nabla^2f$.  
Clearly, from \cite[Theorem 4.1.9]{Nesterov2004}, if $f$ is a self-concordant function, then it admits a $(0, 0)$-global inexact oracle, namely $\tilde{f}(x)=f(x)$, $g(x) = \nabla f(x)$, and $H(x) = \nabla^2 f(x)$ by setting $\delta_0 = 0$ and $\delta_1 = 0$.  

A global inexact oracle will be used to analyze global convergence of our algorithms developed in the next sections.
In order to investigate local convergence of Newton-type methods we also require a local inexact second-order oracle in addition to the above global inexact one.

\begin{definition}[Local inexact second-order  oracle]\label{de:local_inexact_oracle}
\textit{
For a twice differentiable convex function $f$ and a subset $\Xc \subset \dom{f}$, a triple $(\tilde{f}, g, H)$ is called a $(\delta_0,\delta_1,\delta_2,\delta_3)$-local inexact second-order oracle of $f$ on $\Xc$ if   for any $x\in\Xc$ and $y\in\R^p$ such that $\tnorm{y-x}_x < \frac{1}{1+\delta_0}$, we have $y\in\dom{f}$,   $H(x) \succ 0$, and \eqref{eq:global_inexact_oracle} holds.
Additionally, the following approximations for the gradient and Hessian mappings hold:
\begin{equation}\label{eq:local_inexact_oracle}
\left\{\begin{array}{ll}
&\tnorm{g(x)  - \nabla{f}(x)}_{x}^{\ast} \leq \delta_2, \vspace{1ex}\\
&(1-\delta_3)^2 \nabla^2{f}(x) \preceq H(x) \preceq (1+\delta_3)^2 \nabla^2{f}(x),
\end{array}\right.
\end{equation}
for all $x \in \Xc$, where $\boldsymbol{\delta} :=(\delta_0, \delta_1,\delta_2,\delta_3) \geq 0$   and $0 \leq \delta_0,\delta_3 < 1$.
}
\end{definition}

Note that we only require the two conditions of \eqref{eq:local_inexact_oracle} in a given subset $\Xc$ of $\dom{f}$, therefore this inexact oracle is local. 
Again, we observe that any self-concordant function admits a $(0, 0, 0, 0)$-local oracle, which is the exact second-order oracle.

\beforesubsec
\subsection{\bf Properties of global inexact oracle}
\label{subsec:properties_of_global_inexact_oracle}
\aftersubsec
Convex functions, including self-concordant functions, have many important properties on the function values, gradients, and Hessian mapping \cite{Nesterov1994,Nesterov2004}. 
These properties are necessary  to develop Newton-type and interior-point methods. 

The following lemma provides some key properties of our global inexact oracle of $f$ whose proof is given in Appendix \ref{apdx:le:global_properties}. 
Note that these properties hold for general convex functions endowed with such  a global inexact  oracle.

\begin{lemma}\label{le:global_properties}
Let $(\tilde{f}, g, H)$ be a $(\delta_0,\delta_1)$-global inexact oracle of a convex function $f$ as defined in Definition~\ref{de:global_inexact_oracle}.
Then:
\begin{compactitem}
\item[$\mathrm{(a)}$] For any $x\in\dom{f}$, we have
\begin{equation}\label{eq:global_inexact_oracle_pro1}
\tilde{f}(x)\leq f(x) \leq \tilde{f}(x) + \delta_1.
\end{equation}

\item[$\mathrm{(b)}$]
Assume that $f^{\star} := \min_{x}\set{ f(x) \mid x \in\dom{f}} > -\infty$.
Then, the inexact gradient $g(\bar{x})$ certifies a $\delta_1$-approximate minimizer $\bar{x} \in\dom{f}$ of $f$;
that is, if $\iprod{g(\bar{x}), y -\bar{x}}\geq 0$ for all $y\in\dom{f}$, then
\begin{equation*}
f^{\star} \leq f(\bar{x}) \leq f^{\star} + \delta_1.
\end{equation*}

\item[$\mathrm{(c)}$]
For any $x\in\intx{\dom{f}}$, the interior of $\dom{f}$, the difference between  $g(x)$ and the true $($sub$)$gradient of  a convex function  $f$ is bounded as
\begin{equation}\label{eq:global_inexact_oracle_pro4}
\tnorm{\nabla f(x)-g(x)}_{x}^{\ast} \leq \delta_2(\delta_0,\delta_1),
\end{equation}
where $\delta_2(\delta_0,\delta_1)$ is the unique nonnegative solution $($always exists$)$ of the equation $\omega \left(\frac{\delta_2}{1+\delta_0} \right) = \delta_1$ in $\delta_2$. 
Moreover, $\delta_2(\delta_0,\delta_1) \to 0$ as $\delta_0,\delta_1 \to 0$.

\item[$\mathrm{(d)}$]
For any $x \in\intx{\dom{f}}$ and $y \in\dom{f}$, we have
\begin{equation}\label{eq:global_inexact_oracle_pro6a}
\omega\left(\tfrac{\tnorm{g(x) - \nabla{f}(y)}_{x}^{\ast}}{1+\delta_0}\right) \leq  \tnorm{g(x) - \nabla{f}(y)}_{x}^{\ast}\tnorm{y - x}_{x} + \delta_1,
\end{equation}
\end{compactitem}
\end{lemma}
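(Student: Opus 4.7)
My plan is to derive the four parts in order from the two-sided oracle bound \eqref{eq:global_inexact_oracle}. Part~(a) is the specialization at $y = x$: the left bound collapses to $\omega(0) = 0 \le f(x) - \tilde f(x)$, and the right bound becomes $f(x) - \tilde f(x) \le \omega_{\ast}(0) + \delta_1 = \delta_1$. For part~(b), the nonnegativity of $\omega$ turns the LHS bound at $\bar x$ into $\tilde f(\bar x) + \iprods{g(\bar x), y-\bar x} \le f(y)$ on the local ball $\tnorm{y-\bar x}_{\bar x} < 1/(1+\delta_0)$, and the hypothesis $\iprods{g(\bar x), y-\bar x} \ge 0$ collapses this to $\tilde f(\bar x) \le f(y)$. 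I propagate the bound to $y^{\star}$ along the chord $y_t = (1-t)\bar x + t y^{\star}$: for admissible $t$, convexity of $f$ gives $\tilde f(\bar x) \le (1-t) f(\bar x) + t f^{\star}$, so $f(\bar x) - f^{\star} \le (f(\bar x) - \tilde f(\bar x))/t \le \delta_1/t$ by~(a); taking $t$ up to $t = 1$ yields $f(\bar x) \le f^{\star} + \delta_1$.

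For part~(c), I test the RHS oracle at $y = x + \alpha v$ with $\tnorm{v}_x = 1$ and $\alpha \in (0, 1/(1+\delta_0))$, combine with the subgradient inequality $f(y) \ge f(x) + \alpha\iprods{\nabla f(x), v}$, and drop the nonnegative $f(x) - \tilde f(x)$ to obtain
\[
\alpha\,\iprods{\nabla f(x) - g(x), v} \le \omega_{\ast}((1+\delta_0)\alpha) + \delta_1.
\]
Taking the supremum over unit $H(x)$-directions and dividing by $\alpha$ gives $\tnorm{\nabla f(x) - g(x)}_{x}^{\ast} \le \alpha^{-1}[\omega_{\ast}((1+\delta_0)\alpha) + \delta_1]$. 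Minimising the right-hand side through the substitution $\tau = (1+\delta_0)\alpha$ relies on the Fenchel-type identity $\tau\omega_{\ast}'(\tau) - \omega_{\ast}(\tau) = \omega(\tau/(1-\tau))$, which turns the first-order condition into $\omega(\tau/(1-\tau)) = \delta_1$; substituting $\tau/(1-\tau) = \omega^{-1}(\delta_1)$ back gives the bound $(1+\delta_0)\omega^{-1}(\delta_1) = \delta_2(\delta_0, \delta_1)$.

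Part~(d) follows the same template, but with the subgradient inequality applied at $y$ rather than $x$. Writing $u = g(x) - \nabla f(y)$ and $\nu = \tnorm{u}_{x}^{\ast}$, I combine $f(z) \ge f(y) + \iprods{\nabla f(y), z-y}$ with the RHS oracle at $x$ for $z = x + tw$, where $w := -H(x)^{-1} u/\nu$ is the unit $H(x)$-direction achieving $\iprods{-u, w} = \nu$. After rearrangement,
\[
t\nu - \omega_{\ast}((1+\delta_0)t) \le B + \delta_1, \quad B := \tilde f(x) - f(y) + \iprods{\nabla f(y), y - x},
\]
and maximising the left side in $t$ produces exactly $\omega(\nu/(1+\delta_0))$ via the same conjugate identity. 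To bound $B$, for $y$ inside the local ball I drop the nonnegative $\omega$-term in the LHS oracle at $x$ to get $\tilde f(x) \le f(y) - \iprods{g(x), y-x}$, whence $B \le \iprods{-u, y-x} \le \nu\,\tnorm{y-x}_x$ by Cauchy--Schwarz; for $y$ outside the ball ($\tnorm{y-x}_x \ge 1/(1+\delta_0)$), the elementary $\omega(\tau) \le \tau$ together with $\nu/(1+\delta_0) \le \nu\,\tnorm{y-x}_x$ settles~(d) directly.

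The main technical hurdle is the closed-form minimisation in~(c) and~(d): recognising the identity $\tau\omega_{\ast}'(\tau) - \omega_{\ast}(\tau) = \omega(\tau/(1-\tau))$ is the single step that produces the clean quantity $\delta_2(\delta_0, \delta_1) = (1+\delta_0)\omega^{-1}(\delta_1)$ in~(c) and $\omega(\nu/(1+\delta_0))$ in~(d); without it one only obtains constants that are off by geometric factors. A secondary point requiring care is the chord-extension in~(b), where the admissible range of $t$ must be pushed up to $t = 1$ so that the local estimate can be propagated globally.
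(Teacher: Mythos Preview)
Your argument is correct and tracks the paper's proof closely: parts~(a), (c), (d) use the same mechanism (specialise at $y=x$; combine the right-hand oracle bound with the subgradient inequality and optimise over a one-parameter perturbation; same for~(d) but with the subgradient taken at $y$). Your conjugate identity $\tau\omega_{\ast}'(\tau)-\omega_{\ast}(\tau)=\omega\!\left(\tfrac{\tau}{1-\tau}\right)$ is just a tidier way to carry out the minimisation that the paper performs by the substitution $\tau=\tfrac{\delta_2}{(1+\delta_0)(1+\delta_0+\delta_2)}$; both land on $\delta_2(\delta_0,\delta_1)=(1+\delta_0)\omega^{-1}(\delta_1)$.

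The one structural difference is your chord extension in~(b) and the case split in~(d), both stemming from your assumption that the \emph{left} inequality in~\eqref{eq:global_inexact_oracle} is only available on the local ball $\tnorm{y-x}_x<\tfrac{1}{1+\delta_0}$. In the paper's reading the lower bound holds for all $y\in\dom f$ (the ball restriction is there only so that $\omega_{\ast}((1+\delta_0)\tnorm{y-x}_x)$ on the right is finite), and accordingly the paper's proof of~(b) applies the lower oracle bound directly at $y=x^{\star}$ to get $f^{\star}\ge\tilde f(\bar x)\ge f(\bar x)-\delta_1$ in one line; likewise in~(d) it is used at an arbitrary $y\in\dom f$ without a separate case. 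Under your literal reading the chord argument in~(b) does \emph{not} close: if $x^{\star}$ lies outside the ball you obtain only $f(\bar x)-f^{\star}\le\delta_1/t_{\max}$ with $t_{\max}<1$, and there is no further mechanism to push $t$ to $1$. So either adopt the paper's interpretation---which dissolves the care-point you flagged and makes both the chord extension and the case split unnecessary---or observe that the lower oracle inequality, being phrased through $\omega$, extends to all $y\in\dom f$ since $\omega$ is finite on $[0,\infty)$.
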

Note that $\delta_2$ depends on $\delta_0$ and $\delta_1$ only in the statement $\mathrm{(c)}$ of Lemma~\ref{le:global_properties}.

\beforesubsec
\subsection{\bf Properties of local inexact oracle}
\label{subsec:properties_of_local_inexact_oracle}
\aftersubsec

First,  it follows from  Lemma~\ref{le:global_properties} that the condition \eqref{eq:global_inexact_oracle} is also sufficient to deduce that $\tnorm{g(x)  - \nabla{f}(x)}_{x}^{\ast} \leq \delta_2$. However, in this case $\delta_2$ is a function of $\delta_0$ and $\delta_1$, and $\delta_2 = \delta_2(\delta_0, \delta_1) \to 0$ as $\delta_0,\delta_1\to 0$. Therefore, the first condition \eqref{eq:local_inexact_oracle} can be guaranteed from the global inexact oracle in Definition \ref{de:global_inexact_oracle}. In order to make our method more flexible, we use the first condition of \eqref{eq:local_inexact_oracle} to define local inexact oracle instead of deriving it from a global inexact oracle as in Lemma~\ref{le:global_properties}. 
This allows all the tolerance parameters to be independent.
We now prove some properties of local inexact oracle in the following lemma, whose proof is given in Appendix \ref{apdx:le:norm_relation}.

\begin{lemma}\label{le:norm_relation}
Let $(\tilde{f}, g, H)$ be a local inexact oracle of a twice differentiable convex function  $f$ on $\Xc \subset \dom{f}$ defined in Definition \ref{de:local_inexact_oracle}. Then, for any $u, v\in\R^p$ and $x\in \Xc$, we have
\begin{equation}\label{eq:norm_relation}
(1-\delta_3)\norms{u}_x \leq \tnorm{u}_x \leq (1+\delta_3)\norms{u}_x ~~~\text{and}~~~\frac{1}{1+\delta_3}\norms{v}_x^{\ast} \leq \tnorm{v}_x^{\ast} \leq \frac{1}{1-\delta_3}\norms{v}_x^{\ast}.
\end{equation}
If, in addition, $f$ is self-concordant, then for any $x, y \in \Xc$ such that $\tnorm{y-x}_x < 1- \delta_3$, we  have:
\begin{equation}\label{eq:key_bound3}
\left\{\begin{array}{ll}
\Big[\frac{1 - \delta_3 - \tnorm{y-x}_x}{1+\delta_3} \Big]^2H(x) & \preceq H(y) \preceq \left[\frac{1+\delta_3}{1 - \delta_3 - \tnorm{y-x}_x}\right]^2 H(x), \vspace{1ex}\\
\tnorm{(\nabla^2 f(x)-H(x))v}_y^{\ast} & \leq \frac{2\delta_3 + \delta_3^2}{(1-\delta_3)(1 -\delta_3 - \tnorm{y-x}_x)} \tnorm{v}_x.
\end{array}\right.
\end{equation}
\end{lemma}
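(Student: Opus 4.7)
The plan is to reduce every inequality in the lemma to the operator bound $(1-\delta_3)^2\nabla^2 f(x) \preceq H(x) \preceq (1+\delta_3)^2\nabla^2 f(x)$ from \eqref{eq:local_inexact_oracle}, combined (when self-concordance is needed) with the classical Hessian--metric estimate
\begin{equation*}
(1-\norms{y-x}_x)^2\nabla^2 f(x) \ \preceq\ \nabla^2 f(y) \ \preceq\ (1-\norms{y-x}_x)^{-2}\nabla^2 f(x),
\end{equation*}
valid for $\norms{y-x}_x<1$ (Theorem~4.1.6 in \cite{Nesterov2004}).

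\textbf{Step 1 (the norm inequalities \eqref{eq:norm_relation}).} Evaluating the Loewner inequality on the quadratic form $u\mapsto u^\top(\cdot) u$ and taking square roots gives the two inequalities for $\tnorm{u}_x$. Inverting the operator inequality (which reverses the Loewner order) produces $(1+\delta_3)^{-2}\nabla^2 f(x)^{-1} \preceq H(x)^{-1} \preceq (1-\delta_3)^{-2}\nabla^2 f(x)^{-1}$, and the same quadratic-form argument applied to $v$ delivers the dual-norm bounds.

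\textbf{Step 2 (Hessian sandwich in \eqref{eq:key_bound3}).} By Step~1, the assumption $\tnorm{y-x}_x<1-\delta_3$ yields $\norms{y-x}_x\leq \tnorm{y-x}_x/(1-\delta_3)<1$, so the self-concordant Hessian estimate applies and $1-\norms{y-x}_x\geq (1-\delta_3-\tnorm{y-x}_x)/(1-\delta_3)$. Chaining
\begin{equation*}
H(y) \ \preceq\ (1+\delta_3)^2\nabla^2 f(y) \ \preceq\ \frac{(1+\delta_3)^2}{(1-\norms{y-x}_x)^{2}}\nabla^2 f(x) \ \preceq\ \frac{(1+\delta_3)^2}{(1-\delta_3)^{2}(1-\norms{y-x}_x)^{2}} H(x)
\end{equation*}
and substituting the lower bound on $1-\norms{y-x}_x$ produces the upper sandwich. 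The lower bound follows from the mirror-image chain beginning with $H(y)\succeq (1-\delta_3)^2 \nabla^2 f(y)$.

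\textbf{Step 3 (approximation bound).} Let $E(x):=\nabla^2 f(x)-H(x)$. Subtracting $\nabla^2 f(x)$ from the oracle inequality yields $-(2\delta_3+\delta_3^2)\nabla^2 f(x)\preceq E(x)\preceq (2\delta_3-\delta_3^2)\nabla^2 f(x)$; hence the symmetric matrix $A:=\nabla^2 f(x)^{-1/2}E(x)\nabla^2 f(x)^{-1/2}$ has spectral norm at most $2\delta_3+\delta_3^2$, so $\norms{E(x)v}_x^{\ast}\leq (2\delta_3+\delta_3^2)\norms{v}_x$. Transport the dual norm from $y$ to $x$ via Step~1 and self-concordance,
\begin{equation*}
\tnorm{E(x)v}_y^{\ast} \ \leq\ \frac{1}{1-\delta_3}\norms{E(x)v}_y^{\ast} \ \leq\ \frac{1}{(1-\delta_3)(1-\norms{y-x}_x)}\norms{E(x)v}_x^{\ast},
\end{equation*}
then plug in the spectral bound, convert $\norms{v}_x\leq \tnorm{v}_x/(1-\delta_3)$, and replace $1/(1-\norms{y-x}_x)$ by its upper bound in terms of $\tnorm{y-x}_x$ from Step~2. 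The $(1-\delta_3)$ factors telescope to exactly the stated constant $(2\delta_3+\delta_3^2)/[(1-\delta_3)(1-\delta_3-\tnorm{y-x}_x)]$.

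The only subtlety is bookkeeping in Step~3: the conversions must be carried out in the right order---passing through $\norms{\cdot}_y^{\ast}$ rather than through $\tnorm{\cdot}_x^{\ast}$---so that one power of $(1-\delta_3)$ cancels in the numerator; routing through $\tnorm{\cdot}_x^{\ast}$ would inflate the final constant by a spurious factor $(1+\delta_3)/(1-\delta_3)$ and would not match the stated bound.
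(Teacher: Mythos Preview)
Your proposal is correct and follows essentially the same route as the paper's proof: Step~1 and Step~2 match the paper's chaining of the oracle bound with \cite[Theorem~4.1.6]{Nesterov2004}, and in Step~3 the paper likewise introduces the normalized matrix $G_x=\nabla^2 f(x)^{-1/2}(\nabla^2 f(x)-H(x))\nabla^2 f(x)^{-1/2}$, bounds its spectral norm by $2\delta_3+\delta_3^2$, and passes through $\norms{\cdot}_y^{\ast}$ exactly as you describe. Your closing remark about routing through $\norms{\cdot}_y^{\ast}$ rather than $\tnorm{\cdot}_x^{\ast}$ to avoid a spurious $(1+\delta_3)/(1-\delta_3)$ factor is precisely the bookkeeping the paper carries out.
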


\beforesec
\section{Examples of Inexact Oracles}\label{sec:examples_of_inexact_oracle}
\aftersec
The notion of inexact oracles naturally appears in the context  of Fenchel conjugate, barrier smoothing, inexact computation, and many other situations. 
Below are some examples to show that our definitions of inexact oracle cover many subclasses of convex functions.

\beforesubsec
\subsection{\bf Example 1: The generality of new global inexact oracle}\label{ex:global}
\aftersubsubsec
We will show in this example that the class of convex functions satisfying Definition \ref{de:global_inexact_oracle} is larger than the class of standard self-concordant functions \cite{Nesterov1994} and Lipschitz gradient convex functions.

\vspace{1ex}
\noindent\textbf{(a)~Lipschitz gradient convex functions:}
Let $f$ be a convex function with $L_f$-Lipschitz gradient on $\dom{f} = \R^p$.
Then, $(f, \nabla{f}, \frac{L_f}{4}\Id)$ is a $(\delta_0, \delta_1)$-global inexact oracle of $f$ in the sense of Definition \ref{de:global_inexact_oracle} with $\delta_0 = 1$ and $\delta_1 := 0$. 

\begin{proof}
We have $0 \leq f(y) - f(x) - \iprods{\nabla{f}(x), y - x} \leq \frac{L_f}{2}\norms{y - x}^2_2$ for any $x, y \in\dom{f}$.
The left-hand side inequality of \eqref{eq:global_inexact_oracle} automatically holds since $\delta_0 = 1$. Now, note that $\frac{\tau^2}{2} \leq \omega_{\ast}(\tau)$ for all $\tau \in [0, 1)$.
Hence, using $H(x) = \frac{L_f}{4}\Id$, we can show that
\begin{equation*}
\frac{L_f}{2}\norm{y-x}_2^2 =   \frac{4\tnorm{y-x}^2_{x}}{2} \leq \omega_{\ast}(2\tnorm{y-x}_x),
\end{equation*}
provided that $\tnorm{y-x}_x < \frac{1}{2}$.
Therefore, we obtain $f(y) - f(x) - \iprods{\nabla{f}(x), y - x} \leq \omega_{\ast}(2\tnorm{y-x}_x)$, which means that the right-hand side of \eqref{eq:global_inexact_oracle} holds. 
The left-hand side inequality in \eqref{eq:global_inexact_oracle} of Definition \ref{de:global_inexact_oracle} automatically holds since $\dom{f} = \R^p$.
We emphasize that the class of Lipschitz gradient convex functions plays a central role in gradient and accelerated gradient-type methods \cite{Nesterov2004}, including inexact variants, e.g.,  in \cite{Devolder2010}.
\Eproof
\vspace{-1ex}
\end{proof}

\noindent\textbf{(b)~The sum of self-concordant and convex functions:}
Let us consider a function $f$ composed of a self-concordant  function $f_1$ and a possibly nonsmooth and convex function  $f_2$ as:
\begin{equation}\label{eq:exam1}
f(x) := f_1(x) + f_2(x).
\end{equation}
We have $\dom{f} = \dom{f_1}\cap\dom{f_2}$.
We assume that for any $g_2(x) \in \partial f_2(x)$  there exists a constant $\delta_1 > 0$, which may depend on $x$, such that  for any $x,y \in \dom{f}$ with $\tnorm{y-x}_x <1$, one has
\begin{equation}
\label{ex_f2nons}
f_2(y) - f_2(x) -  \iprods{g_2(x), y-x} \leq \delta_1.
\end{equation}
We construct  the following three quantities for $f$ defined by \eqref{eq:exam1}:
\begin{equation*}
\tilde{f}(x) := f_1(x) +  f_2(x), ~g(x) := \nabla f_1(x) +  g_2(x) ~ \text{for any} ~g_2(x) \in \partial f_2(x), ~\text{and} ~H(x) := \nabla^2 f_1(x).
\end{equation*}
Then, we can show that $(\tilde{f}, g, H)$ is a $(0, \delta_1)$-global inexact oracle of $f$ by Definition \ref{de:global_inexact_oracle}. 

\begin{proof}
Since $f_1$ is  self-concordant, it holds that
\begin{equation*}
\omega\left(\norm{y - x}_x \right) \leq f_1(y) - f_1(x) -  \iprods{\nabla f_1(x), y-x} \leq  \omega_{\ast}\left(\norm{y - x}_x\right),~~\forall x, y\in\dom{f_1},
\end{equation*}
for all $x,y \in \dom{f}$, where the right-hand side inequality holds for any $\tnorm{y-x}_x = \norms{y-x}_x <1$.
Moreover, by  convexity  of $f_2$ and \eqref{ex_f2nons} we also have
\begin{equation*}
0 \leq f_2(y) - f_2(x) -  \iprods{g_2(x), y-x} \leq \delta_1, ~~\forall x, y\in\dom{f}.
\end{equation*}
Summing up these two inequalities,  we can easily show that the triple $(\tilde{f}, g, H)$ defined above satisfies \eqref{eq:global_inexact_oracle} for  $(0, \delta_1)$-global inexact oracle. 
\Eproof
\vspace{-1ex}
\end{proof}

As a special case of \eqref{eq:exam1}, let us consider a function $f(x) := f_1(x) + \beta f_2(x)$, where $f_1$ is  a self-concordant barrier, $f_2$ is a Lipschitz continuous and convex, but possibly nonsmooth function, and  $\beta > 0$ is a given parameter. 
Assume that  $\dom{f} = \dom{f_1}\cap\dom{f_2}$ is bounded.
(In particular, if $\dom{f_1}$ or $\dom{f_2}$ is bounded, then $\dom{f}$ is bounded.)
Hence, the diameter of $\dom{f}$, $\Dc := \sup_{x,y \in \dom{f}} \norm{x-y}_2$ is finite.
Moreover, since $f_2$ is Lipschitz continuous, there exists  $L_2 > 0$ such that $\vert f_2(x) - f_2(y) \vert \leq L_2 \norms{x-y}_2$  for all $x,y \in \dom{f_2}$. 
We have $\sup_{x \in \dom{f_2}}\set{ \norms{g_2(x)}_2 \mid g_2(x) \in \partial{f_2(x)}} \leq L_2$.
Using these two facts, we can show that
\begin{equation*}
0 \leq f_2(y) - f_2(x) -  \iprods{g_2(x), y-x} \leq L_2 \norm{x-y}_2 + \norm{g_2(x)}_2 \norm{x-y}_2 \leq 2L_2\Dc,~~\forall x, y\in\dom{f}.
\end{equation*}
Therefore, we can construct a $(0,\delta_1)$-global inexact oracle for $f$ with $\delta_1 := 2\beta L_2\Dc$.

\noindent\textbf{(c)~An example with unbounded domain:}
The boundedness of  $\dom{f}$ in the previous example is not necessary.
For example, let us choose
\begin{equation*}
f(x) := f_1(x) + f_2(x), ~\text{where } f_1(x) := -\ln(x)~~\text{and}~~f_2(x) := \max\set{\delta_1, \delta_1 x}~~\text{for any $\delta_1 > 0$}.
\end{equation*}
It is clear that $\dom{f} = \set{ x\in\R \mid x > 0}$, which is unbounded. 
If we take $\tilde{f}(x) := f_1(x) + f_2(x)$, $g(x) := f_1^{\prime}(x) + g_2(x)$, with $g_2(x) \in \partial{f_2}(x)$, and $H(x) := f_1^{\prime\prime}(x)$, then we can show that $(\tilde{f}, g, H)$ is a $(0, \delta_1)$-global inexact oracle of $f$. 

\begin{proof}
Processing as before, the left-hand side inequality of \eqref{eq:global_inexact_oracle} holds for $\delta_0=0$.
The right-hand side inequality of \eqref{eq:global_inexact_oracle} has to hold for $\tnorm{y-x}_x < \frac{1}{1+\delta_0}$, which induces a bound on  $y$ of the form $(y-x)^2/x^2 < 1/(1+\delta_0)^2$, that is, for $\delta_0=0$ we have  $0 < y < 2x$. Then, by using this fact and the definition of $f_2$, we can show that (the detailed proof of this inequality is given in Appendix~\ref{app:detail_proofs}(b)):
\begin{equation}\label{eq:exam1c_bound}
f_2(y) - f_2(x) -  \iprods{g_2(x), y-x} \leq \delta_1,~~\forall x,y \in \dom{f}, ~~ \tnorm{y-x}_x < 1.
\end{equation}
This shows that the triple $(\tilde{f}, g, H)$ is a $(0, \delta_1)$-global inexact oracle of the nonsmooth convex function $f$ with unbounded domain.
\Eproof
\end{proof}
\vspace{-1ex}

\beforesubsec
\subsection{\bf Example 2: Inexact computation}\label{ex:inego}
\aftersubsec
It is natural to approximate the function value $f(x)$ at $x$ by $\tilde{f}(x)$ such that $\absc{f(x) - \hat{f}(x)} \leq \varepsilon$ for some $\varepsilon \geq 0$.
In this case, we can define a new inexact oracle as follows.
Assume that the triple $(\tilde{f}, g, H)$ satisfies the following inequalities:
\begin{equation}\label{eq:inexact_oracle2}
\left\{\begin{array}{lll}
&\absc{\hat{f}(x) - f(x)} &\leq \varepsilon, \vspace{1ex}\\
&\tnorm{g(x)  - \nabla{f}(x)}_{x}^{\ast} &\leq \delta_2, \vspace{1ex}\\
&(1-\delta_3)^2 \nabla^2{f}(x) &\preceq H(x) \preceq (1+\delta_3)^2 \nabla^2{f}(x),
\end{array}\right. ~~\forall x \in\dom{f}.
\end{equation}
where $\varepsilon \geq 0$, $\delta_2 \geq 0$,  and $\delta_3 \in [0, 1)$.
In addition, $H$ satisfies the condition that for any $x\in\dom{f}$, if $\tnorm{y - x}_x < \frac{1}{1+\delta_0}$ for $y\in\R^p$, then $y\in\dom{f}$. 

The last condition on $H(x)$ automatically holds if $\dom{f} = \R^p$.
It also holds if $f(x) := f_1(x) + f_2(x)$ and $H(x) := f''_1(x)$ as in Example 1(b) above such that $\dom{f} = \dom{f_1}$.{\!\!\!\!}

Clearly, if $f$ is self-concordant, then \eqref{eq:inexact_oracle2} is more restrictive than the oracles defined in Definitions~\ref{de:global_inexact_oracle} and  \ref{de:local_inexact_oracle} as we show in Lemma~\ref{le:inexact_oracle_bounds} below, whose proof is given in Appendix \ref{apdx:le:inexact_oracle_bounds}.

\begin{lemma}\label{le:inexact_oracle_bounds}
Let $f$ be a standard self-concordant function and $(\hat{f}, g, H)$  satisfy the condition \eqref{eq:inexact_oracle2} for some $\delta_2\geq 0$ and $\delta_3 \in [0,1)$ such that $2\delta_2 + \delta_3 < 1$.
Define 
\begin{equation}\label{eq:psi_stars}
\left\{\begin{array}{lll}
\underline{c}_{23} &:=  \sqrt{(1-\delta_2 - \delta_3)^2 + (1-\delta_3)(1-2\delta_2 - \delta_3)} &\geq 0, \vspace{1ex}\\
\bar{c}_{23} &:=  \sqrt{3(1+\delta_2+\delta_3)^2 - (1+\delta_3)(1+2\delta_2+\delta_3)} &\geq 0, \vspace{1ex}\\
\underline{\psi}^{*}(\delta_2,\delta_3) &:=  \frac{\delta_2}{ (1-\delta_2-\delta_3) + \underline{c}_{23} } - \ln\left(1 + \frac{2\delta_2}{ (2- 3\delta_2 - 2\delta_3) + \underline{c}_{23}}\right) & \geq 0, \vspace{1ex}\\
\bar{\psi}^{*}(\delta_2,\delta_3) &:= \frac{3\delta_2}{3(1+\delta_2+\delta_3) + \rvnn{\bar{c}_{23}}} - \ln\left(1 + \frac{2\delta_2}{2(1+\delta_2+\delta_3) + \bar{c}_{23}}\right) & \geq 0.
\end{array}\right.
\end{equation}
Let us also define $\delta_0 := 2\delta_2 + \delta_3 \in (0, 1]$, $\delta_1 := \max\set{0, 2\varepsilon + \bar{\psi}^{*}(\delta_2,\delta_3) - \underline{\psi}^{*}(\delta_2,\delta_3)} \geq 0$, and $\tilde{f}(x) := \hat{f}(x) - \varepsilon + \underline{\psi}^{*}(\delta_2,\delta_3)$.
Then, $(\tilde{f}, g, H)$ is a $(\delta_0, \delta_1)$-global inexact oracle of $f$ as in Definition~\ref{de:global_inexact_oracle}.
\end{lemma}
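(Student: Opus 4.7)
The plan is to derive the inequalities in Definition~\ref{de:global_inexact_oracle} by starting from the self-concordance characterization \eqref{eq:ns_sc} of $f$ and absorbing the three error sources of \eqref{eq:inexact_oracle2}---the value error $\varepsilon$, the gradient error $\delta_2$, and the Hessian error $\delta_3$---into the $\omega/\omega_{\ast}$ bounds via $\delta_0$, $\delta_1$, and the shift $\underline{\psi}^{\ast}$ built into $\tilde{f}$. Fix $x\in\dom{f}$ and $y\in\R^p$ with $r:=\tnorm{y-x}_x < 1/(1+\delta_0)$, and set $\rho:=\norms{y-x}_x$. The Hessian inclusion in \eqref{eq:inexact_oracle2} combined with Lemma~\ref{le:norm_relation} yields $r/(1+\delta_3) \leq \rho \leq r/(1-\delta_3)$; since $\delta_0 = 2\delta_2+\delta_3 < 1$, this forces $\rho < 1$, so \eqref{eq:ns_sc} applied to $f$ gives $\omega(\rho) \leq f(y)-f(x)-\iprods{\nabla f(x), y-x} \leq \omega_{\ast}(\rho)$. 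The remaining oracle conditions $H(x)\succ 0$ and $y\in\dom{f}$ are handled, respectively, by the Hessian bound and by the extra assumption on $H$ stated just before the lemma.

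Next, I would decompose
$$f(y) - \tilde{f}(x) - \iprods{g(x), y-x} = [f(x)-\tilde{f}(x)] + \iprods{\nabla f(x)-g(x), y-x} + [f(y)-f(x)-\iprods{\nabla f(x), y-x}],$$
and estimate each term: by $\lvert f(x)-\hat{f}(x)\rvert \leq \varepsilon$ and the definition of $\tilde{f}$, the first bracket lies in an interval of length $2\varepsilon$; Cauchy--Schwarz with $\tnorm{g(x)-\nabla f(x)}_x^{\ast} \leq \delta_2$ bounds the middle term by $\delta_2 r$ in absolute value; the last bracket is sandwiched by $\omega(\rho)$ and $\omega_{\ast}(\rho)$. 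Taking worst-case signs and substituting the extremal $\rho$ in each direction reduces the upper-side obligation to $\sup_r \bar{h}(r) \leq \bar{\psi}^{\ast}$ with $\bar{h}(r) := \delta_2 r + \omega_{\ast}(r/(1-\delta_3)) - \omega_{\ast}((1+\delta_0)r)$, and the lower-side obligation to the mirror statement involving $\underline{h}(r) := \omega(r/(1+\delta_3)) - \omega((1-\delta_0)r) - \delta_2 r$ and $\underline{\psi}^{\ast}$. Setting $\delta_1 := \max\{0, 2\varepsilon + \bar{\psi}^{\ast} - \underline{\psi}^{\ast}\}$ then absorbs the residual additive constants and delivers \eqref{eq:global_inexact_oracle}.

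The main obstacle is evaluating $\bar{\psi}^{\ast} = \sup_r \bar{h}(r)$ and the analogous extremum defining $\underline{\psi}^{\ast}$ in closed form. In each case, after clearing the rational terms $t/(1\pm t)$ stemming from $\omega^{\prime}$ and $\omega_{\ast}^{\prime}$, the stationarity condition reduces to a quadratic in $r$ whose discriminant is precisely $\bar{c}_{23}^{\,2}$ (respectively $\underline{c}_{23}^{\,2}$) in \eqref{eq:psi_stars}; substituting the unique admissible root back into $\bar{h}$ (respectively $\underline{h}$) produces the stated logarithmic-plus-rational closed forms for $\bar{\psi}^{\ast}$ and $\underline{\psi}^{\ast}$. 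Secondary checks---that under $2\delta_2+\delta_3 < 1$ the critical point lies in $[0, 1/(1+\delta_0))$, the discriminants are real, and the resulting $\bar{\psi}^{\ast}, \underline{\psi}^{\ast}$ are of the correct sign---follow by routine monotonicity arguments using the shape of $\omega$ and $\omega_{\ast}$.
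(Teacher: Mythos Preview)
Your overall strategy---decompose $f(y)-\tilde f(x)-\iprods{g(x),y-x}$ into the value, gradient, and self-concordance contributions and reduce each side of \eqref{eq:global_inexact_oracle} to a one-variable extremization in $r=\tnorm{y-x}_x$---is exactly the paper's. The gap is in the conversion between $\rho=\norms{y-x}_x$ and $r$. You use the sharp relations $r/(1+\delta_3)\le\rho\le r/(1-\delta_3)$ from Lemma~\ref{le:norm_relation}, so your auxiliary functions are
\[
\underline h(r)=\omega\!\left(\tfrac{r}{1+\delta_3}\right)-\omega\big((1-\delta_0)r\big)-\delta_2 r,
\qquad
\bar h(r)=\delta_2 r+\omega_{\ast}\!\left(\tfrac{r}{1-\delta_3}\right)-\omega_{\ast}\big((1+\delta_0)r\big).
\]
The paper instead works with the multiplicative factors $(1\mp\delta_3)$: its auxiliary functions are $\underline\psi(t)=-\delta_2 t+\omega\big((1-\delta_3)t\big)-\omega\big((1-\delta_0)t\big)$ and $\bar\psi(t)=\delta_2 t+\omega_{\ast}\big((1+\delta_3)t\big)-\omega_{\ast}\big((1+\delta_0)t\big)$, and the constants $\underline c_{23},\bar c_{23},\underline\psi^{\ast},\bar\psi^{\ast}$ in \eqref{eq:psi_stars} are precisely the discriminants and critical values that arise from \emph{these} functions (the paper in fact introduces free parameters $\alpha,\beta$ and then fixes $\alpha(1-\delta_3)=1-\delta_0$ and $\beta(1+\delta_3)=1+\delta_0$, which is equivalent).

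Since $1/(1+\delta_3)\neq 1-\delta_3$ and $1/(1-\delta_3)\neq 1+\delta_3$ for $\delta_3>0$, the quadratic you obtain from $\bar h'(r)=0$ (respectively $\underline h'(r)=0$) has a different discriminant and a different extremal value; your claim that solving it ``produces the stated logarithmic-plus-rational closed forms for $\bar\psi^{\ast}$ and $\underline\psi^{\ast}$'' is therefore where the argument fails to deliver the lemma as written. To recover \eqref{eq:psi_stars} you must replace $r/(1+\delta_3)$ by $(1-\delta_3)r$ on the lower side---a harmless weakening, since $(1-\delta_3)\le 1/(1+\delta_3)$---and use $(1+\delta_3)r$ on the upper side, exactly as the paper does.
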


\beforesubsubsec
\subsection{\bf Example 3: Fenchel conjugates}\label{subsec:Fen_conj}
\aftersubsubsec
Any convex function $f$ can be written as $f(x) = \sup_{y}\set{ \iprods{x, y} - f^{\ast}(y)}$, where $f^{\ast}$ is the Fenchel conjugate of $f$.
Borrowing this interpretation, we consider the following general convex function:
\begin{equation}\label{eq:max_func}
f(x) := \max_{u \in\dom{\varphi}}\big\{ \iprods{u,A^{\top}x} - \varphi(u) \big\},
\end{equation}
where $\varphi : \R^n\to\Rext$ is a self-concordant function, and $A\in\R^{p\times n}$ is a given linear operator.
In order to evaluate $f$ and its derivatives, we need to solve the following convex program:
\begin{equation}\label{eq:max_func_arg}
u^{\ast}(x) := \mathrm{arg}{\!\!\!\!\!\!\!}\min_{u \in\dom{\varphi}}\big\{ \varphi(u) - \iprods{u,A^{\top}x} \big\}, ~~\text{or equivalent to}~~\nabla{\varphi}(u^{\ast}(x)) - A^{\top}x = 0.
\end{equation}
Clearly,  $u^{\ast}(x) = \nabla\varphi^{\ast}(A^{\top}x)$.
As shown in \cite{Nesterov1994}, $f$ defined by \eqref{eq:max_func} is convex, twice differentiable, and  self-concordant on
\begin{equation*}
\dom{f} := \set{x \in\R^p \mid \varphi(u) - \iprods{u, A^{\top}x} ~\text{is bounded from below on $\dom{\varphi}$}}.
\end{equation*}
The exact gradient and Hessian maps of $f$ are respectively given by
\begin{equation}\label{eq:exact_derivs}
\nabla f(x) = A u^{\ast}(x)~~\text{and}~~\nabla^2 f(x) = A [\nabla^2 \varphi(u^{\ast}(x))]^{-1} A^{\top}.
\end{equation}
However, in many settings, we can only approximate $u^{\ast}(x)$ by $\tilde{u}^{\ast}(x)$ up to a given accuracy $\delta$ in the following sense, which leads to inexact evaluations of $\nabla{f}$ and $\nabla^2{f}$.

\begin{definition}\label{de:def_inexact_a}
Given $x \in \dom{f}$ and $\delta \in [0, 1)$, we say that $\tilde{u}^{\ast}(x)\in \dom{\varphi}$ is a $\delta$-solution of \eqref{eq:max_func_arg} if  $\delta(x) := \norm{\tilde{u}^{\ast}(x) - u^{\ast}(x)}_{\tilde{u}^{\ast}(x)} \leq \delta$,  where the local norm is defined w.r.t. $\nabla^2{\varphi}(\cdot)$.
\end{definition}

For $\tilde{u}^{\ast}(\cdot)$ given in Definition \ref{de:def_inexact_a}, we define
\begin{equation}\label{eq:inexact_oracle3}
\begin{array}{l}
\tilde{f}(x) := \iprods{\tilde{u}^{\ast}(x),A^{\top}x} - \varphi(\tilde{u}^{\ast}(x)), ~~~ g(x) := A\tilde{u}^{\ast}(x), ~\text{and}~ H(x) := A [\nabla^2{\varphi}(\tilde{u}^{\ast}(x))]^{-1} A^{\top}.
\end{array}
\end{equation}
We show in the following lemma that this triplet satisfies our conditions for inexact oracles.
In addition, since $u^{\ast}(x)$ is unknown, it is impractical to check $\delta(x) \leq \delta$ directly.
We show how to guarantee this condition by approximately checking the optimality condition of  \eqref{eq:max_func_arg} in the following lemma, whose proof is given in Appendix \ref{apdx:le:est_A}.

\begin{lemma}\label{le:est_A}
Let $\tilde{u}^{\ast}(\cdot)$ be a $\delta$-approximate solution of $u^{\ast}(\cdot)$ in the sense of Definition \ref{de:def_inexact_a} and $(\tilde{f}, g, H)$ be given by \eqref{eq:inexact_oracle3}.
If $\delta\in [0, 0.292]$, then $(\tilde{f}, g, H)$ with $\hat{f}(x) := \tilde{f}(x) - \omega_{\ast}\big(\frac{\delta}{1-\delta}\big) + \underline{\psi}^{*}(\delta, \frac{\delta}{1-\delta})$ is also a $(\delta_0, \delta_1)$-global inexact oracle of $f$ as in Definition \ref{de:global_inexact_oracle}, where $\delta_0, \delta_1$, and  $\underline{\psi}^{*}(\cdot)$  are defined in Lemma~\ref{le:inexact_oracle_bounds}.

Moreover,  we have the following estimates:
\begin{equation}\label{eq:bounds_exam1}
\tnorm{g(x) - \nabla f(x)}_x^{\ast} \leq \delta ~~~\text{and}~~(1-\delta_3)^2 \nabla^2 f(x)  \preceq H(x) \preceq (1+\delta_3)^2 \nabla^2 f(x),~~\text{with}~~\delta_3 := \tfrac{\delta}{1-\delta}.
\end{equation}
If $\norms{\nabla{\varphi}(\tilde{u}^{\ast}(x)) - A^{\top}x}_{\tilde{u}^{\ast}(x)}^{\ast} \leq \frac{\delta}{1 + \delta}$, then we can guarantee that $\delta(x) := \norm{\tilde{u}^{\ast}(x) - u^{\ast}(x)}_{\tilde{u}^{\ast}(x)} \leq \delta$.
\end{lemma}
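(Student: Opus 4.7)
\medskip
\noindent\textbf{Proof proposal for Lemma \ref{le:est_A}.}
My plan is to first establish the two bounds in \eqref{eq:bounds_exam1} directly, then bracket $f(x)$ by $\tilde{f}(x)$ with a controllable error $\varepsilon$, and finally invoke Lemma \ref{le:inexact_oracle_bounds} to upgrade these ingredients into a $(\delta_0,\delta_1)$-global inexact oracle. The last claim about the Newton-type stopping criterion is a standard self-concordant estimate. The main obstacle, in my view, is the gradient bound, since $g(x) - \nabla f(x) = A(\tilde u^{\ast}(x) - u^{\ast}(x))$ lives in $\R^p$ while the hypothesis on $\delta$ controls a local norm in $\R^n$; I will get around this via a Schur-type projection inequality. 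Also non-trivial is converting the local norm at $\tilde u^{\ast}(x)$ into one at $u^{\ast}(x)$ in order to measure the function-value gap.

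First I would prove the Hessian sandwich in \eqref{eq:bounds_exam1}. Since $\varphi$ is self-concordant and $\norms{\tilde u^{\ast}(x) - u^{\ast}(x)}_{\tilde u^{\ast}(x)} \leq \delta < 1$, the Dikin ellipsoid inequality gives $(1-\delta)^{2}\nabla^2\varphi(\tilde u^{\ast}(x)) \preceq \nabla^2\varphi(u^{\ast}(x)) \preceq (1-\delta)^{-2}\nabla^2\varphi(\tilde u^{\ast}(x))$. Inverting, pre-multiplying by $A$ and post-multiplying by $A^{\top}$, and using $\nabla^2 f(x) = A[\nabla^2\varphi(u^{\ast}(x))]^{-1}A^{\top}$ together with $H(x) = A[\nabla^2\varphi(\tilde u^{\ast}(x))]^{-1}A^{\top}$, yields $(1-\delta)^{2}\nabla^2 f(x) \preceq H(x) \preceq (1-\delta)^{-2}\nabla^2 f(x)$. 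Setting $\delta_3 := \delta/(1-\delta)$ gives $(1+\delta_3)^2 = (1-\delta)^{-2}$ and $(1-\delta_3)^2 = (1-2\delta)^2/(1-\delta)^2 \leq (1-\delta)^2$, so the required inequality $(1-\delta_3)^2\nabla^2 f(x) \preceq H(x) \preceq (1+\delta_3)^2\nabla^2 f(x)$ follows. For the gradient bound, let $\Delta := \tilde u^{\ast}(x) - u^{\ast}(x)$ and $M := \nabla^2\varphi(\tilde u^{\ast}(x))$, so $g(x) - \nabla f(x) = A\Delta$ and $H(x) = AM^{-1}A^{\top}$. Factoring $B := AM^{-1/2}$, the identity $y^{\top}(BB^{\top})^{-1}y = \min\{\norms{z}_2^2 : Bz = y\}$ for $y$ in the range of $B$, evaluated at the feasible $z = M^{1/2}\Delta$, gives $(A\Delta)^{\top}H(x)^{-1}(A\Delta) \leq \Delta^{\top}M\Delta = \norms{\Delta}_{\tilde u^{\ast}(x)}^{2} \leq \delta^{2}$, i.e.\ $\tnorm{g(x)-\nabla f(x)}_{x}^{\ast} \leq \delta$.

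Next I would produce the scalar $\varepsilon$ needed to feed Lemma \ref{le:inexact_oracle_bounds}. Let $\psi_x(u) := \varphi(u) - \iprods{A^{\top}x, u}$; this is self-concordant with $\nabla\psi_x(u^{\ast}(x))=0$ and $-\tilde f(x) = \psi_x(\tilde u^{\ast}(x))$, $-f(x) = \psi_x(u^{\ast}(x))$. Hence $\tilde f(x) \leq f(x)$. Converting norms via the Dikin inequality, $\norms{\Delta}_{u^{\ast}(x)} \leq (1-\delta)^{-1}\norms{\Delta}_{\tilde u^{\ast}(x)} \leq \delta/(1-\delta) < 1$ (for $\delta < 1/2$), so the right-hand inequality in \eqref{eq:ns_sc}, applied to $\psi_x$ at $u^{\ast}(x)$ with $\nabla\psi_x(u^{\ast}(x)) = 0$, gives $\psi_x(\tilde u^{\ast}(x)) - \psi_x(u^{\ast}(x)) \leq \omega_{\ast}(\delta/(1-\delta))$. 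Therefore $0 \leq f(x) - \tilde f(x) \leq \omega_{\ast}(\delta/(1-\delta)) =: \varepsilon$. Together with the previous paragraph, the triple $(\tilde f, g, H)$ from \eqref{eq:inexact_oracle3} satisfies \eqref{eq:inexact_oracle2} with $(\varepsilon,\delta_2,\delta_3) = (\omega_{\ast}(\delta/(1-\delta)),\,\delta,\,\delta/(1-\delta))$. A direct computation shows $2\delta_2+\delta_3 < 1 \iff 2\delta^2 - 4\delta + 1 > 0 \iff \delta < 1 - \sqrt{2}/2 \approx 0.2929$, which is exactly the range $\delta \in [0,0.292]$. Invoking Lemma \ref{le:inexact_oracle_bounds} with these parameters, the shifted function $\hat f(x) := \tilde f(x) - \omega_{\ast}(\delta/(1-\delta)) + \underline{\psi}^{\ast}(\delta, \delta/(1-\delta))$ turns $(\hat f, g, H)$ into a $(\delta_0,\delta_1)$-global inexact oracle with $\delta_0, \delta_1$ as declared.

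Finally, for the implication $\norms{\nabla\varphi(\tilde u^{\ast}(x)) - A^{\top}x}_{\tilde u^{\ast}(x)}^{\ast} \leq \delta/(1+\delta) \Rightarrow \norms{\tilde u^{\ast}(x) - u^{\ast}(x)}_{\tilde u^{\ast}(x)} \leq \delta$, set $\lambda := \norms{\nabla\psi_x(\tilde u^{\ast}(x))}_{\tilde u^{\ast}(x)}^{\ast} = \delta/(1+\delta) < 1$. Applying the standard self-concordant Newton-decrement bound \cite[Theorem 4.1.13]{Nesterov2004} to $\psi_x$ at $\tilde u^{\ast}(x)$ gives $\norms{\tilde u^{\ast}(x) - u^{\ast}(x)}_{\tilde u^{\ast}(x)} \leq \lambda/(1-\lambda)$; plugging in $\lambda = \delta/(1+\delta)$ yields $1-\lambda = 1/(1+\delta)$, hence $\lambda/(1-\lambda) = \delta$, as desired. \Eproof
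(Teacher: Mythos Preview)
Your proof is correct and follows essentially the same route as the paper: establish the Hessian sandwich via the Dikin ellipsoid, prove the gradient bound through the Schur-type projection inequality $A^{\top}(AQ^{-1}A^{\top})^{-1}A \preceq Q$ (which is exactly your least-squares characterization rewritten), bracket $f(x)-\tilde f(x)$ by $\omega_{\ast}\big(\tfrac{\delta}{1-\delta}\big)$ after converting the local norm from $\tilde u^{\ast}(x)$ to $u^{\ast}(x)$, and then invoke Lemma~\ref{le:inexact_oracle_bounds} with $(\varepsilon,\delta_2,\delta_3)=(\omega_{\ast}(\tfrac{\delta}{1-\delta}),\delta,\tfrac{\delta}{1-\delta})$, checking $2\delta_2+\delta_3<1$ numerically. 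The only substantive difference is in the last claim: the paper argues via the gradient-monotonicity inequality \cite[Theorem~4.1.7]{Nesterov2004}, obtaining $\tfrac{\delta(x)}{1+\delta(x)}\le \Vert r(x)\Vert^{\ast}_{\tilde u^{\ast}(x)}$, whereas you invoke the Newton-decrement bound \cite[Theorem~4.1.13]{Nesterov2004} directly; both are standard and yield the same conclusion (just note that your ``$\lambda=\delta/(1+\delta)$'' should read ``$\lambda\le\delta/(1+\delta)$'', after which the monotonicity of $t\mapsto t/(1-t)$ finishes the job).
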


As an example of \eqref{eq:max_func}, we consider the following constrained convex optimization problem:
\begin{equation*}
\min_{u\in\R^n}\Big\{ \phi(u) ~~\textrm{s.t.}~~ Au = b, ~u \in \Uc \Big\},
\end{equation*}
where $\phi$ is a standard self-concordant function, $A\in\R^{n\times p}$, $b\in\R^n$, and $\Uc$ is a nonempty, closed, and convex set in $\R^n$ that admits a self-concordant barrier (see \cite{Nesterov1994,Nesterov2004}).
The dual function associated with this problem is defined as
\begin{equation*}
f(x) := \displaystyle\max_{u\in\R^n}\set{  \iprods{x, Au - b} - \phi(u) \mid u\in\Uc}
\end{equation*}
is convex and differentiable, but does not have  Lipschitz gradient and is not  self-concordant in general.
Hence, we often smooth it using a self-concordant barrier function $b_{\Uc}$ of $\Uc$ to obtain
\begin{equation}\label{eq:barrier_smoothing}
f_{\gamma}(x) := \max_{u}\Big\{ \iprods{x, Au - b} - \phi(u) - \gamma b_{\Uc}(u) \Big\},
\end{equation}
where $\gamma > 0$ is a smoothness parameter. When $\gamma$ is sufficiently small, $f_{\gamma}(x)$ can be considered as an approximation of the dual function $f(x)$ at $x$.
Note that in this case $\varphi= \phi + \gamma b_{\Uc}$.
Similar to \eqref{eq:max_func_arg}, very often, we cannot solve the maximization problem \eqref{eq:barrier_smoothing} exactly to evaluate $f$ and its derivatives. We only obtain an approximate solution $\tilde{u}_{\gamma}^{\ast}(x)$ of its true solution $u_{\gamma}^{\ast}(x)$.
In this case, the oracle we obtain via $\tilde{u}_{\gamma}^{\ast}(\cdot)$ generates an inexact oracle for the function $f_{\gamma}(\cdot)$.

\beforesec
\section{Inexact Proximal Newton Methods Using Inexact Oracles}\label{sec:proximal_nt_methods}
\aftersec
We utilize our inexact oracles to develop an \underline{\textbf{i}}nexact \underline{\textbf{P}}roximal \underline{\textbf{N}}ewton \underline{\textbf{A}}lgorithm \eqref{eq:prox_nt_scheme} for solving \eqref{eq:c_sc_min}.
Our algorithm allows one to use both inexact oracles and inexact computation for the proximal Newton direction. Therefore, it is different from some recent works on this topic such as \cite{gao2016quasi,Lu2016a,zhang2015disco}. Note that 
\cite{Lu2016a,zhang2015disco} only focus on inexact computation of Newton-type directions, while \cite{gao2016quasi} approximates Hessian mappings using quasi-Newton schemes.
Our approach combines both aspects but for a more general setting.

\beforesubsec
\subsection{\bf Scaled proximal operator and fixed-point formulation}
\aftersubsec
A key component of our algorithm is the following scaled proximal operator of a proper, closed, and convex function $R$:
\begin{equation}\label{eq:scaled_prox_oper}
\Pc_x(u):=(\Id + H(x)^{-1}\partial R)^{-1}(u),
\end{equation}
where $H(x) \in \Sc^p_{++}$ is a given symmetric positive definite matrix at $x\in\dom{f}$ and $\Id$ is the identity mapping in $\R^p$.
Evaluating $\Pc_x(u)$ is equivalent to solving the following strongly convex subproblem defined through the local norm $\tnorm{\cdot}_x$:
\begin{equation}\label{eq:scvx_subprob}
\Pc_x(u) = \argmin_{z\in\R^n}\Big\{ R(z) + \tfrac{1}{2}\tnorm{z - u}_x^2 \Big\}.
\end{equation}
If $H(x)$ is the identity matrix, then \eqref{eq:scaled_prox_oper} reduces to a standard proximal operator.
In this case, we say that $R$ is proximally tractable if $\Pc_x(\cdot)$ can be computed efficiently (e.g.,  by a closed form or a low-order polynomial-time algorithm).
Examples of tractably proximal functions can be found, e.g.,  in \cite{Parikh2013}. 
One key property of  $\Pc_x(\cdot)$ defined by \eqref{eq:scaled_prox_oper} is the nonexpansiveness \cite[Lemma 2]{Tran-Dinh2013a}:
\begin{equation}\label{eq:nonexpansiveness}
\tnorm{\mathcal{P}_x(u)-\mathcal{P}_x(v)}_x\leq \tnorm{u-v}_x,~~\forall u, v\in\R^p,~x\in\dom{f}.
\end{equation}
Assuming that $\relint{\dom{f}}\cap\relint{\dom{R}} \neq\emptyset$, we can express the optimality condition of \eqref{eq:c_sc_min} as
\begin{equation}\label{eq:opt_cond}
0\in \partial{F}(x^{\star}) \equiv \nabla f(x^{\star})+\partial R(x^{\star})
\end{equation}
Using $\Pc_x$ and for any $x\in\dom{f}$, we can reformulate \eqref{eq:opt_cond} into a fixed-point formulation as:
\begin{equation}
\label{opt:quad0}
\begin{array}{lrl}
& x^{\star} - H(x)^{-1}\nabla f(x^{\star}) & \in x^{\star} + H(x)^{-1}\partial R(x^{\star}) \iff  x^{\star}  =\mathcal{P}_{x}\left(x^{\star} - H(x)^{-1}\nabla f(x^{\star})\right).
\end{array}
\end{equation}
This shows that $x^{\star}$ is a fixed-point of the mapping $\mathcal{R}_x(\cdot) := \Pc_x\left(\cdot - H(x)^{-1}\nabla{f}(\cdot) \right)$.

\beforesubsec
\subsection{\bf \ref{eq:prox_nt_scheme} with global inexact oracle: Global convergence}\label{pN_inx}
\aftersubsec
In this subsection, we  describe our new \eqref{eq:prox_nt_scheme} to solve \eqref{eq:c_sc_min} under  general setting. 

\beforeparagraph
\paragraph{\bf The inexact proximal Newton scheme:}
Given a  global   inexact oracle $(\tilde{f}, g, H)$ of $f$, we first build a quadratic surrogate of $f$ at $x^k\in\dom{F}$ as
\begin{equation*}
\Qc(x; x^k):= \tilde{f}(x^k) + \iprods{g(x^k), x - x^k} + \tfrac{1}{2}\iprods{H(x^k)(x-x^k),x-x^k}.
\end{equation*}
\eqref{eq:prox_nt_scheme}  for solving \eqref{eq:c_sc_min} consists of two steps:
\begin{equation}\label{eq:prox_nt_scheme}
\left\{\begin{array}{ll}
z^k &:\approx \displaystyle\argmin_{x\in\R^p}\big\{ \hat{F}_k(x) := \Qc(x; x^k) + R(x) \big\} \vspace{1.5ex}\\
x^{k+1} &:= (1-\alpha_k)x^k + \alpha_kz^k = x^k + \alpha_kd^k~~~\text{with}~d^k := z^k - x^k,
\end{array}\right.\tag{iPNA}
\end{equation}
where $d^k$ is called an inexact proximal Newton direction,  $\alpha_k\in (0, 1]$ is a given stepsize, and the approximation $:\approx$ means that $z^k$ is computed until satisfying the following stopping criterion:
\begin{equation}\label{eq:inexact_subp}
\tnorm{\nu^k}_{x^k}^{\ast} \leq \delta_4 \ \tnorm{z^k - x^k}_{x^k}, \; \text{where} \;  \nu^k\in g(x^k)+H(x^k)(z^k - x^k)+\partial R(z^k) \; \text{and} \; \delta_4 \in [0, 1).
\end{equation}
Note that one can solve the subproblem in \eqref{eq:prox_nt_scheme} by any first-order scheme, such as  FISTA \cite{Beck2009}, and check criterion \eqref{eq:inexact_subp} as described  in Appendix \ref{app:sub}.
One can also easily adapt the accuracy $\delta_4 := \delta_4^k \in [0, 1)$ at each iteration. However, for the sake of notation, we use the same $\delta_4$.
Clearly, if $\delta_4 = 0$, then $z^k = \bar{z}^k := \argmin_{x\in\R^p}\big\{ \hat{F}_k(x) := \Qc(x; x^k) + R(x) \big\}$, the exact solution of the subproblem in \eqref{eq:prox_nt_scheme}.
Practically, we do not need to evaluate $\tilde{f}(x^k)$ to form $\Qc$ in \eqref{eq:prox_nt_scheme}.

\beforeparagraph
\paragraph{\bf Global convergence:}
We now state one of our main results, which is the  global convergence of our inexact proximal Newton algorithm.

\begin{theorem}\label{th:global_convergence3}
Assume that $(\tilde{f}, g, H)$ is a $(\delta_0,\delta_1)$-global inexact oracle of $f$ given in Definition~\ref{de:global_inexact_oracle}.
Let $\sets{x^k}$ be the sequence computed by \eqref{eq:prox_nt_scheme} starting from $\xb^0 \in\dom{F}$ and using a stepsize $\alpha_k$ as
\begin{equation}\label{eq:step_size2}
\alpha_k :=\frac{1-\delta_4}{(1 + \delta_0)(1+\delta_0 + (1-\delta_4)\lambda_k)},~~~\text{with}~~~\lambda_k := \tnorm{d^k}_{x^k},
\end{equation}
where $\delta_4 \in [0, 1)$ is the accuracy of the inexact proximal Newton step defined by \eqref{eq:inexact_subp}.
Then the following statements hold:
\begin{compactitem}
\item[$\mathrm{(a)}$] 
The following property on $F$  is satisfied:
\begin{equation}\label{eq:descent_property3}
F(x^{k+1}) \leq F(x^k) - \omega\left(\frac{(1-\delta_4)\lambda_k}{1+\delta_0}\right) + \delta_1.
\end{equation}
Consequently, we have
\begin{equation}\label{eq:convergence_rate}
\min_{0 \leq i \leq k}\omega\left(\frac{(1-\delta_4)\lambda_i}{1+\delta_0}\right) \leq \frac{1}{k+1}\sum_{i=0}^k\omega\left(\frac{(1-\delta_4)\lambda_i}{1+\delta_0}\right) \leq \frac{F(x^0) - F^{\star}}{k+1} + \delta_1.
\end{equation}

\item[$\mathrm{(b)}$] 
Let  $k_{*} := \argmin\set{\lambda_i \mid 0 \leq i \leq k}$ and  $\hat{\varepsilon} :=  \omega\left( \frac{(1-\delta_4)\varepsilon}{1+\delta_0}\right)$ for a given tolerance $\varepsilon > 0$. If $\delta_1 \leq \frac{\hat{\varepsilon}}{2}$, then after 
\begin{equation*}
k := \BigO{\frac{2(F(x^0) - F^{\star})}{\hat{\varepsilon}}}~~ \text{iterations}, 
\end{equation*}
we obtain $\lambda_{k_{*}} \leq \varepsilon$. 

\item[$\mathrm{(c)}$] 
Let us choose $\varepsilon$ such that $0 < \varepsilon < \frac{1}{1+\delta_0}$ and $0 < \delta_1 \leq 1$.
If  $\lambda_{k_{*}} \leq \frac{1}{4(1+\delta_0)}$, then the sequence $\sets{z^k}$  satisfies
\begin{equation*}
\inf_{r(z^{k_{*}})  \in \partial{R}(z^{k_{*}})} \tnorm{\nabla f(z^{k_{*}}) + r(z^{k_{*}})}^{\ast}_{x^{k_{*}}} \equiv \inf_{\nabla{F}^{k_{*}} \in \partial{F}(z^{k_{*}})} \tnorm{\nabla{F}^{k_{*}}}^{\ast}_{x^{k_{*}}} \leq \BigO{ \lambda_{k_{*}} + \sqrt{\delta_1}}.
\end{equation*}
If $\lambda_{k_{*}} \leq \BigO{\varepsilon} \leq \frac{1}{4(1+\delta_0)}$ and $\delta_1 \leq \BigO{\varepsilon^2} \leq 1$, then $\inf_{r(z^{k_{*}})  \in \partial{R}(z^{k_{*}})} \tnorm{\nabla f(z^{k_{*}}) + r(z^{k_{*}})}^{\ast}_{x^{k_{*}}} \leq \BigO{\varepsilon}$, which $\BigO{\varepsilon}$-approximately guarantees the optimality condition of \eqref{eq:c_sc_min} in the weighted norm $\tnorm{\cdot}_{x^{k_{*}}}^*$ $($i.e., up to a constant factor$)$. 
\item[$\mathrm{(d)}$] 
In particular, under the conditions of $\mathrm{(c)}$, if there exists $L \in [0, +\infty)$ such that $H(x^{k_{*}}) \preceq L \Id$ and $\lambda_{k_{*}} \leq \BigO{\varepsilon} \leq \frac{1}{4(1+\delta_0)}$ for $x^{k_{*}} \in \dom{F}$ and $\delta_1 \leq \BigO{\varepsilon^2} \leq 1$, then  
\begin{equation*}
\inf\limits_{\nabla{F}(z^{k_{*}}) \in \partial{F}(z^{k_{*}})}\big\Vert \nabla{F}(z^{k_{*}})\big\Vert_2 \leq \BigO{\varepsilon}.
\end{equation*}
\end{compactitem}
\end{theorem}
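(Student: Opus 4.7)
The plan is to establish a per-iteration descent inequality for $F$ by combining the upper bound of the inexact oracle on $f(x^{k+1})$ with the approximate optimality of the inexact proximal Newton subproblem and convexity of $R$, then specialize $\alpha_k$ via Fenchel conjugacy of $\omega_{\ast}$. Starting from the right-hand side of \eqref{eq:global_inexact_oracle} at $x=x^k$, $y=x^{k+1}=x^k+\alpha_k d^k$ and using $\tnorm{x^{k+1}-x^k}_{x^k}=\alpha_k\lambda_k$, I obtain
\begin{equation*}
f(x^{k+1}) \leq \tilde{f}(x^k) + \alpha_k\iprods{g(x^k),d^k} + \omega_{\ast}\bigl((1+\delta_0)\alpha_k\lambda_k\bigr) + \delta_1.
\end{equation*}
Convexity of $R$ gives $R(x^{k+1})\leq (1-\alpha_k)R(x^k)+\alpha_k R(z^k)$, while the inclusion $\nu^k-g(x^k)-H(x^k)d^k\in\partial R(z^k)$ combined with \eqref{eq:inexact_subp} and Cauchy--Schwartz (in $\tnorm{\cdot}_{x^k}$/$\tnorm{\cdot}_{x^k}^{\ast}$) yields $R(z^k)-R(x^k)\leq -\iprods{g(x^k),d^k}-(1-\delta_4)\lambda_k^2$. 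Adding these and using $\tilde{f}(x^k)\leq f(x^k)$ from Lemma~\ref{le:global_properties}(a),
\begin{equation*}
F(x^{k+1})-F(x^k) \leq -(1-\delta_4)\alpha_k\lambda_k^2 + \omega_{\ast}\bigl((1+\delta_0)\alpha_k\lambda_k\bigr) + \delta_1.
\end{equation*}
Substituting $\beta:=(1+\delta_0)\alpha_k\lambda_k$, the right-hand side becomes $-\tfrac{(1-\delta_4)\lambda_k}{1+\delta_0}\beta+\omega_{\ast}(\beta)+\delta_1$, minimized at $\beta=\tfrac{(1-\delta_4)\lambda_k}{1+\delta_0+(1-\delta_4)\lambda_k}$, which gives exactly the stepsize in \eqref{eq:step_size2}. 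Since $\omega_{\ast}^{\ast}=\omega$, the minimum value equals $-\omega\bigl(\tfrac{(1-\delta_4)\lambda_k}{1+\delta_0}\bigr)$, proving \eqref{eq:descent_property3}. Telescoping and bounding $F(x^{k+1})\geq F^{\star}$ yields the averaged bound \eqref{eq:convergence_rate}, completing part (a).

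For part (b), strict monotonicity of $\omega$ turns \eqref{eq:convergence_rate} into an iteration complexity bound: once $\delta_1\leq\hat{\varepsilon}/2$, any $k+1\geq 2(F(x^0)-F^{\star})/\hat{\varepsilon}$ forces $\omega\bigl(\tfrac{(1-\delta_4)\lambda_{k_{\ast}}}{1+\delta_0}\bigr)\leq\hat{\varepsilon}=\omega\bigl(\tfrac{(1-\delta_4)\varepsilon}{1+\delta_0}\bigr)$, hence $\lambda_{k_{\ast}}\leq\varepsilon$. Part (d) is immediate from (c): when $H(x^{k_{\ast}})\preceq L\Id$ one has $\|u\|_2\leq\sqrt{L}\,\tnorm{u}_{x^{k_{\ast}}}^{\ast}$ for any $u$, so the weighted bound yields the Euclidean one up to a constant factor.

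The main obstacle is part (c). Using the inclusion in \eqref{eq:inexact_subp} I define $r(z^{k_{\ast}}):=\nu^{k_{\ast}}-g(x^{k_{\ast}})-H(x^{k_{\ast}})d^{k_{\ast}}\in\partial R(z^{k_{\ast}})$, and split
\begin{equation*}
\nabla f(z^{k_{\ast}})+r(z^{k_{\ast}}) = \bigl[\nabla f(z^{k_{\ast}})-g(x^{k_{\ast}})\bigr] - H(x^{k_{\ast}})d^{k_{\ast}} + \nu^{k_{\ast}}.
\end{equation*}
Applying $\tnorm{\cdot}_{x^{k_{\ast}}}^{\ast}$ contributes $\tnorm{H(x^{k_{\ast}})d^{k_{\ast}}}_{x^{k_{\ast}}}^{\ast}=\lambda_{k_{\ast}}$ and $\tnorm{\nu^{k_{\ast}}}_{x^{k_{\ast}}}^{\ast}\leq\delta_4\lambda_{k_{\ast}}$. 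The principal difficulty is bounding $\rho:=\tnorm{\nabla f(z^{k_{\ast}})-g(x^{k_{\ast}})}_{x^{k_{\ast}}}^{\ast}$, which via Lemma~\ref{le:global_properties}(d) satisfies the implicit relation $\omega\bigl(\tfrac{\rho}{1+\delta_0}\bigr)\leq \rho\lambda_{k_{\ast}}+\delta_1$. Using the lower bound $\omega(t)\geq t^2/4$ valid for $t\in[0,1]$ (guaranteed here by $\lambda_{k_{\ast}}\leq\tfrac{1}{4(1+\delta_0)}$ and $\delta_1\leq 1$), this becomes the quadratic inequality $t^2\leq 4(1+\delta_0)t\lambda_{k_{\ast}}+4\delta_1$ in $t=\rho/(1+\delta_0)$, whose solution gives $\rho=\BigO{\lambda_{k_{\ast}}+\sqrt{\delta_1}}$. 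Combining with the uniform bound $\delta_2(\delta_0,\delta_1)=\BigO{\sqrt{\delta_1}}$ (itself recovered from $\omega(\delta_2/(1+\delta_0))=\delta_1$ by the same quadratic trick) and gathering all three contributions yields the desired $\BigO{\lambda_{k_{\ast}}+\sqrt{\delta_1}}$ bound. The tricky point will be carefully carrying the constants through this inversion of the implicit inequality so that $\lambda_{k_{\ast}}\leq\BigO{\varepsilon}$ and $\delta_1\leq\BigO{\varepsilon^2}$ produce a clean $\BigO{\varepsilon}$ estimate.
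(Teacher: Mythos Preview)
Your argument for parts (a), (b), and (d) is essentially identical to the paper's: same descent inequality via the oracle upper bound plus convexity of $R$ and the subproblem inclusion, same optimization over $\alpha_k$ (the paper computes it directly rather than invoking $\omega_{\ast}^{\ast}=\omega$, but the outcome is the same), same telescoping, and the same $\sqrt{L}$ conversion for (d). One small omission: you should verify that $\alpha_k\lambda_k<\tfrac{1}{1+\delta_0}$ so the oracle upper bound at $(x^k,x^{k+1})$ is legitimate, and similarly that $\lambda_{k_\ast}<\tfrac{1}{1+\delta_0}$ forces $z^{k_\ast}\in\dom f$ before invoking Lemma~\ref{le:global_properties}(d); the paper checks both explicitly.

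For part (c) your decomposition and use of Lemma~\ref{le:global_properties}(d) match the paper, but there is a gap in the inversion step. You invoke $\omega(t)\geq t^2/4$ ``valid for $t\in[0,1]$,'' claiming this range is guaranteed by $\lambda_{k_\ast}\leq\tfrac{1}{4(1+\delta_0)}$ and $\delta_1\leq 1$. But $t=\rho/(1+\delta_0)$ is the unknown you are trying to bound, so you cannot assume $t\leq 1$ a priori; with $\delta_1$ as large as $1$ the implicit inequality $\omega(t)\leq \tfrac14 t+\delta_1$ allows $t$ well above $1$, and $\omega(t)\geq t^2/4$ fails beyond $t\approx 1.6$. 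The paper sidesteps this by using the globally valid bound $\omega(\tau)\geq \tau^2/(2(1+\tau))$, which after clearing denominators yields the genuine quadratic $(1-2\bar\lambda)s^2-2(\bar\lambda+\delta_1)s-2\delta_1\leq 0$ with $\bar\lambda:=(1+\delta_0)\lambda_{k_\ast}\leq\tfrac14$; solving it gives $s\leq 4(\bar\lambda+2\sqrt{\delta_1})$ and hence $\rho\leq 4(1+\delta_0)^2\lambda_{k_\ast}+8(1+\delta_0)\sqrt{\delta_1}$. Your separate invocation of $\delta_2(\delta_0,\delta_1)=\BigO{\sqrt{\delta_1}}$ is unnecessary: the bound on $\rho$ already controls $\tnorm{\nabla f(z^{k_\ast})-g(x^{k_\ast})}_{x^{k_\ast}}^{\ast}$ directly, and the paper does not use Lemma~\ref{le:global_properties}(c) here at all.
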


\begin{proof}
(a)~From \eqref{eq:inexact_subp} we have $\nu^k - H(x^k)d^k - g(x^k) \in \partial{R}(z^k)$ with $d^k := z^k - x^k$.
Combining this expression, $x^{k+1} := (1-\alpha_k)x^k + \alpha_kz^k$, and the convexity of $R$, we can derive
\begin{equation}\label{eq:th9_proof1}
\begin{array}{ll}
R(x^{k+1}) &\leq (1-\alpha_k)R(x^k) + \alpha_kR(z^k) \vspace{1ex}\\
&\leq R(x^k) + \alpha_k\iprods{\nu^k - H(x^k)d^k - g(x^k), d^k} \vspace{1ex}\\
&= R(x^k) + \alpha_k\iprods{\nu^k, d^k} - \alpha_k\iprods{g(x^k), d^k} - \alpha_k\tnorm{d^k}_{x^k}^2.
\end{array}
\end{equation}
Note that $\tnorm{x^{k+1} - x^k}_{x^k} = \alpha_k\tnorm{d^k}_{x^k} = \alpha_k\lambda_k$.
We need to choose $\alpha_k \in (0, 1)$ such that $\alpha_k\lambda_k < \frac{1}{1+\delta_0}$.
Under this condition, using \eqref{eq:global_inexact_oracle} with $(y, x) = (x^{k+1}, x^k)$ and \eqref{eq:global_inexact_oracle_pro1}, we can derive that
\begin{eqnarray*}
f(x^{k+1}) & \overset{\tiny\eqref{eq:global_inexact_oracle}}{\leq} & \tilde{f}(x^k) + \iprods{g(x^k), x^{k+1} - x^k} + \omega_{\ast}\left( (1+\delta_0)\tnorm{x^{k+1}-x^k}_{x^k} \right) + \delta_1\\
& \overset{\tiny\eqref{eq:global_inexact_oracle_pro1}}{\leq} & f(x^k) + \alpha_k\iprods{g(x^k),d^k} + \omega_{\ast}\left( (1+\delta_0)\alpha_k\tnorm{d^k}_{x^k} \right) + \delta_1.
\end{eqnarray*}
Adding this  to \eqref{eq:th9_proof1} and using $\lambda_k := \tnorm{d^k}_{x^k}$ and the first condition of \eqref{eq:inexact_subp} as $\iprods{\nu^k,d^k} \leq \tnorm{\nu^k}_{x^k}^{\ast} \tnorm{d^k}_{x^k} \leq \delta_4\tnorm{d^k}_{x^k}^2 = \delta_4\lambda_k^2$, we can show that
\begin{equation}\label{eq:descent_subinexact}
F(x^{k+1}) \leq F(x^k) - \alpha_k(1-\delta_4)\lambda_k^2  +~ \omega_{\ast}\left( (1+\delta_0)\alpha_k\tnorm{d^k}_{x^k} \right) + \delta_1.
\end{equation}
Note that the function $s_k(\alpha) := \lambda_k^2(1-\delta_4)\alpha - \omega_{\ast}((1+\delta_0)\lambda_k\alpha) =  \lambda_k^2(1-\delta_4)\alpha + (1+\delta_0)\lambda_k\alpha + \ln\left(1 - (1+\delta_0)\lambda_k\alpha\right)$ is concave in $\alpha$.  
We can find its maximum over $[0, 1]$ by solving the equation $s_k'(\alpha) = \lambda_k^2(1-\delta_4) + (1+\delta_0)\lambda_k - \frac{(1+\delta_0)\lambda_k}{1- (1+\delta_0)\lambda_k\alpha} =  0$, leading to:
\begin{equation*}
\alpha_k := \frac{1-\delta_4}{(1+\delta_0)(1+\delta_0 + (1-\delta_4)\lambda_k)} \in [0, 1],
\end{equation*}
as in \eqref{eq:step_size2} with the optimal value $s_k(\alpha_k) =\omega\left(\frac{(1-\delta_4)\lambda_k}{1+\delta_0}\right)$.
Substituting $s_k(\alpha_k)$ into \eqref{eq:descent_subinexact}, we get
\begin{equation*} 
F(x^{k+1}) \leq F(x^k) - \omega\left(\tfrac{(1-\delta_4)\lambda_k}{1+\delta_0}\right) + \delta_1,
\end{equation*}
which is  exactly \eqref{eq:descent_property3}. 
In addition, we have 
\begin{equation*}
\begin{array}{ll}
(1+\delta_0)\alpha_k\lambda_k = \frac{(1-\delta_4)\lambda_k}{(1+\delta_0 + (1-\delta_4)\lambda_k)} < 1.
\end{array}
\end{equation*}
Therefore, the choice of $\alpha_k$ guarantees $\alpha_k\lambda_k < \frac{1}{1+\delta_0}$.

\noindent 
Now, summing up \eqref{eq:descent_property3} from $i=0$ to $i=k$ and noting that  $F(x^{k+1}) \geq F^{\star} > -\infty$, we obtain
\begin{equation*}
\min_{0 \leq i \leq k}\omega\left(\tfrac{(1-\delta_4)\lambda_i}{1+\delta_0}\right) \leq \frac{1}{k+1}\sum_{i=0}^k \omega\left(\tfrac{(1-\delta_4)\lambda_i}{1+\delta_0}\right)  \leq \frac{F(x^0) - F^{\ast}}{k+1} + \delta_1,
\end{equation*}
which is exactly \eqref{eq:convergence_rate}. 

(b)~If we define ${\lambda}_{k_{*}} := \min\set{ \lambda_i \mid 0 \leq i \leq k}$, then since $\omega$ is an increasing function, we have 
\begin{equation*}
\omega\left(\frac{(1-\delta_4) {\lambda}_{k_{*}}}{1+\delta_0}\right) \leq \min_{0 \leq i \leq k}\omega\left(\tfrac{(1-\delta_4)\lambda_i}{1+\delta_0}\right) \leq \frac{F(x^0) - F^{\ast}}{k+1} + \delta_1.
\end{equation*}
Assume that $\frac{F(x^0) - F^{\ast}}{k+1} \leq \frac{\hat{\varepsilon}}{2}$ and $\delta_1 \leq \frac{\hat{\varepsilon}}{2}$ for some $\hat{\varepsilon} > 0$.
We can overestimate the above inequality as $\omega\left(\frac{(1-\delta_4){\lambda}_{k_{*}}}{1+\delta_0}\right) \leq \hat{\varepsilon}$.
This implies that ${\lambda}_{k_{*}} \leq \frac{(1+\delta_0)\omega^{-1}(\hat{\varepsilon})}{1-\delta_4} := \varepsilon$.
Therefore, we obtain $\hat{\varepsilon} := \omega\left( \frac{(1-\delta_4)\varepsilon}{1+\delta_0}\right)$.
Moreover, the condition $\frac{F(x^0) - F^{\ast}}{k+1} \leq \frac{\hat{\varepsilon}}{2}$ shows that $k = \BigO{\frac{2(F(x^0) - F^{\star})}{\hat{\varepsilon}}}$.

(c)~Next, from the optimality condition of \eqref{eq:inexact_subp}, we have $\nu^i =  g(x^i)+H(x^i)(z^i - x^i) + r(z^i)$, where $r(z^i) \in \partial{R}(z^i)$ for any $0 \leq i \leq k$. 
Since ${\lambda}_{k_{*}} :=\tnorm{x_{k_{*}} - z_{k_{*}}}_{x_{k_{*}}} \leq \varepsilon$  for ${k_{*}} := \argmin\set{\lambda_i \mid 0 \leq i \leq k}$ and $\delta_0 \in [0, 1]$, to guarantee  $z^{k_{*}}  \in \dom{f}$ by Definition \ref{de:global_inexact_oracle}, we need to choose $\varepsilon$ such that $\varepsilon < \frac{1}{1+\delta_0}$.

The  optimality condition of  \eqref{eq:inexact_subp} also leads to
\begin{equation*}
\nabla f(z^{k_{*}})+r(z^{k_{*}}) = -H(x^{k_{*}})(z^{k_{*}} - x^{k_{*}}) + (\nabla f(z^{k_{*}}) - g(x^{k_{*}})) + \nu^{k_{*}}.
\end{equation*}
By the property of the local norm and the definition of the stopping criterion \eqref{eq:inexact_subp}, we have
\begin{equation}\label{eq:th9_proof2}
\begin{array}{ll}
\tnorm{\nabla f(z^{k_{*}}) + r(z^{k_{*}})}^{\ast}_{x^{k_{*}}} & \leq \tnorm{H(x^{k_{*}})(z^{k_{*}} - x^{k_{*}})}_{x^{k_{*}}}^{\ast} + \tnorm{\nu^{k_{*}}}_{x^{k_{*}}}^{\ast} + \tnorm{\nabla f(z^{k_{*}}) - g(x^{k_{*}})}_{x^{k_{*}}}^{\ast}\vspace{1ex}\\
&\leq (1 + \delta_4)\lambda_{k_{*}} + \tnorm{\nabla f(z^{k_{*}}) - g(x^{k_{*}})}_{x^{k_{*}}}^{\ast}.
\end{array}
\end{equation}
From \eqref{eq:global_inexact_oracle_pro6a} it follows  that $\omega\left(\frac{\tnorm{g(x^{k_{*}}) - \nabla{f}(z^{k_{*}})}^{\ast}_{x^{k_{*}}}}{1+\delta_0}\right) \leq \tnorm{g(x^{k_{*}}) - \nabla{f}(z^{k_{*}})}^{\ast}_{x^{k_{*}}}\lambda_{k_{*}} + \delta_1$.
However, since $\omega(\tau) \geq \frac{\tau^2}{2(1+\tau)}$ for $\tau \geq 0$, by denoting $s_{k_{*}} := \frac{1}{1+\delta_0}\tnorm{g(x^{k_{*}}) - \nabla{f}(z^{k_{*}})}^{\ast}_{x^{k_{*}}}$ and $\bar{\lambda}_{k_{*}} := (1+\delta_0)\lambda_{k_{*}}$, the last estimate implies that $\frac{s_{k_{*}}^2}{2(1+s_{k_{*}})}  \leq s_{k_{*}} \bar{\lambda}_{k_{*}} + \delta_1$ and  hence leads to
\begin{equation*} 
(1-2\bar{\lambda}_{k_{*}}) s_{k_{*}}^2 - 2(\bar{\lambda}_{k_{*}}+\delta_1) s_{k_{*}} - 2\delta_1 \leq 0.
\end{equation*}
This is a quadratic inequality of the form $as_{k_{*}}^2 + 2b s_{k_{*}} + c \leq 0$ in $s_{k_{*}} \geq 0$ with $a := (1-2\bar{\lambda}_{k_{*}})$, $b := -(\bar{\lambda}_{k_{*}}+\delta_1)$, and $c := - 2\delta_1$.
Solving this inequality  and noting that $s_{k_{*}} \geq 0$, we obtain 
\begin{equation*}
0 \leq s_{k_{*}} \leq \frac{\sqrt{b^2 - ac} - b}{a} = \frac{\sqrt{(\bar{\lambda}_{k_{*}}  - \delta_1)^2 + 2\delta_1} + (\bar{\lambda}_{k_{*}}  +\delta_1)}{(1 - 2\bar{\lambda}_{k_{*}})}.
\end{equation*}
Assume that $\bar{\lambda}_{k_{\ast}} \leq \frac{1}{4}$ (or equivalently $\lambda_{k_{*}} \leq \frac{1}{4(1+\delta_0)}$) and $\delta_1 \leq 1$.
Then, by upper bounding the numerator $\sqrt{(\bar{\lambda}_{k_{*}}  - \delta_1)^2 + 2\delta_1} + (\bar{\lambda}_{k_{*}}  +\delta_1) \leq 2\bar{\lambda}_{k_{*}} + 4\sqrt{\delta_1}$ and lower bounding the denominator $1 - 2\bar{\lambda}_{k_{*}} \geq \frac{1}{2}$ of the right-hand side in the last inequality, we can overestimate $\frac{\sqrt{(\bar{\lambda}_{k_{*}}  - \delta_1)^2 + 2\delta_1} + (\bar{\lambda}_{k_{*}}  +\delta_1)}{(1-2\bar{\lambda}_{k_{*}})} \leq 4(\bar{\lambda}_{k_{*}} + 2\sqrt{\delta_1})$.
Therefore, we have
\begin{equation}\label{eq:norm_bound2}
\tnorm{g(x^{k_{*}}) - \nabla{f}(z^{k_{*}})}^{\ast}_{x^{k_{*}}} \leq 4(1+\delta_0)(\bar{\lambda}_{k_{*}} + 2\sqrt{\delta_1}) = 4(1+\delta_0)^2\lambda_{k_{*}} + 8(1+\delta_0)\sqrt{\delta_1}.
\end{equation}
Combining  \eqref{eq:th9_proof2} and \eqref{eq:norm_bound2}, we arrive at
\begin{equation*}
\inf_{r(z^{k_{*}}) \in \partial{R}(z^{k_{*}})} \tnorm{\nabla f(z^{k_{*}}) + r(z^{k_{*}})}^{\ast}_{x^{k_{*}}} \leq \left[1 + \delta_4 + 4(1+\delta_0)^2\right]\lambda_{k_{*}} + 8(1+\delta_0)\sqrt{\delta_1} = \BigO{\lambda_{k_{*}} + \sqrt{\delta_1}}.
\end{equation*}
Clearly, if we have $\lambda_{k_{*}} \leq \BigO{\varepsilon} \leq \frac{1}{4(1+\delta_0)}$ and $\delta_1$ is chosen such that $\delta_1 = \BigO{\varepsilon^2} \leq 1$, then the last inequality leads to $\inf_{r(z^{k_{*}}) \in \partial{R}(z^{k_{*}})} \tnorm{\nabla f(z^{k_{*}}) + r(z^{k_{*}})}^{\ast}_{x^{k_{*}}} \leq \BigO{\varepsilon}$, which $\BigO{\varepsilon}$-approximately guarantees the optimality condition of \eqref{eq:c_sc_min} in the weighted norm $\tnorm{\cdot}_{x^{k_{*}}}^*$.

(d)~Finally, the last statement of this theorem is an immediate  consequence of the last estimate since $\frac{1}{\sqrt{L}} \norms{\nabla f(z^{k_{*}}) + r(z^{k_{*}})}_2 \leq \tnorm{\nabla f(z^{k_{*}}) + r(z^{k_{*}})}^{\ast}_{x^{k_{*}}}$.  \hfill$\square$
\end{proof}

\begin{remark}
Since $\lim_{\varepsilon\to 0} \lambda_{k_{*}} = \lim_{\varepsilon\to 0}\tnorm{z^{k_{*}} - x^{k_{*}}}_{x^{k_{*}}} = 0$ in Theorem \ref{th:global_convergence3}, we can see that if there exists $L \in [0, +\infty)$ such that $H(x^{k_{*}}) \preceq L\Id$, $\delta_1 \leq \varepsilon^2$, and $\lambda_{k_{*}} \leq \varepsilon$ for $x^{k_{*}} \in \dom{F}$, then $\lim_{\varepsilon\to 0} z^{k_{*}} = \lim_{\varepsilon\to 0}x^{k_{*}} = x^{\ast}$ if these limits exist $($at least via a subsequence$)$. Hence, by \cite[Theorem 24.4]{Rockafellar1970}, we have $\inf_{r^{\ast} \in \partial{R}(x^{\ast})} \norm{\nabla f(x^{\ast}) + r^{\ast}}_2 = 0$.  
\Eproof
\end{remark}

\begin{remark}
To guarantee only the property \eqref{eq:descent_property3}, one can use a weaker stopping criterion $\iprods{\nu^k,d^k} \leq \delta_4 \lambda_k^2$ along with $\delta_4 < 1$ instead of \eqref{eq:inexact_subp} to avoid the matrix inversion in computing $\tnorm{\nu^k}_{x^k}^{\ast}$. 
In addition, the proof of \eqref{eq:descent_property3} holds using this criterion even when $\delta_4$ is nonpositive. 
\Eproof
\end{remark}

\noindent Note that the statements of Theorem \ref{th:global_convergence3} are also valid  for an adaptive inexact oracle framework. 
More precisely, in contrast to Theorem \ref{th:global_convergence3}, where the accuracy level of the oracle is fixed according to the desired accuracy, we can also consider an adaptive oracle, where its accuracy   in the  early iterations  can be  rough and becomes finer in the last iterations. 
We state such a result in the next theorem, whose proof is similar to Theorem \ref{th:global_convergence3} and is omitted:

\begin{theorem}\label{th:adaptive_global_convergence}
Assume that $(\tilde{f}_k, g_k, H_k)$ is an adaptive  $(\delta_0^k,\delta_1^k)$-global inexact oracle of $f$. Let $\sets{x^k}$ be the sequence computed by \eqref{eq:prox_nt_scheme} starting from $\xb^0$, where $\alpha_k :=\frac{1-\delta_4^k}{(1 + \delta_0^k)(1+\delta_0^k + (1-\delta_4^k)\lambda_k)}$, with $\lambda_k := \tnorm{d^k}_{x^k}$ and $\delta_4^k \in [0, 1)$ given in \eqref{eq:inexact_subp}. 
Then:
\begin{compactitem}
\item[$\mathrm{(a)}$] 
The following descent property holds:
\begin{equation*}
F(x^{k+1}) \leq F(x^k) - \omega\left(\frac{(1-\delta_4^k)\lambda_k}{1+\delta_0^k}\right) + \delta_1^k.
\end{equation*}
\item[$\mathrm{(b)}$] 
Assume, in addition, that $\delta_0^k \in [0, 1]$, $\delta_1^k$ and $\delta_4^k$ are chosen such that $\sum_{k=0}^{\infty}\delta_1^k < +\infty$  and  $0 \leq \delta_4^k \leq {\delta}_4 <1$, then the inexact Newton decrement sequence $\sets{\lambda_k}$ converges to zero as $k\to\infty$. 
\item[$\mathrm{(c)}$] 
Consequently, the sequence $\sets{z^k}$ also satisfies 
\begin{equation*}
\lim_{k\to\infty} \inf_{\nabla{F}(z^k) \in \partial{F}(z^k)} \tnorm{\nabla{F}(z^k)}^{\ast}_{x^k} = 0.
\end{equation*} 
\end{compactitem}
\end{theorem}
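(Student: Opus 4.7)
The strategy is to replay, essentially verbatim, the three-step argument of Theorem~\ref{th:global_convergence3}, but track the iteration-indexed accuracies $(\delta_0^k,\delta_1^k,\delta_4^k)$ instead of fixed constants, and then close the argument in part (b) by a summability/telescoping device and in part (c) by a limiting version of the perturbation estimate \eqref{eq:global_inexact_oracle_pro6a}.

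\textbf{Part (a).} I would first redo the one-step descent calculation from the proof of Theorem~\ref{th:global_convergence3}(a), replacing $\delta_0,\delta_1,\delta_4$ by $\delta_0^k,\delta_1^k,\delta_4^k$ throughout. Concretely: combine the stopping criterion \eqref{eq:inexact_subp} with $\delta_4=\delta_4^k$ and the convexity of $R$ to control $R(x^{k+1})$, and combine the upper bound in the global inexact oracle \eqref{eq:global_inexact_oracle} with $\delta_0=\delta_0^k$, $\delta_1=\delta_1^k$ and \eqref{eq:global_inexact_oracle_pro1} to bound $f(x^{k+1})$. Adding them yields the inequality \eqref{eq:descent_subinexact} with $k$-indexed constants, and the choice of $\alpha_k$ in the statement is precisely the maximizer over $\alpha\in[0,1]$ of the concave function $\alpha\mapsto (1-\delta_4^k)\lambda_k^2\alpha-\omega_{\ast}((1+\delta_0^k)\lambda_k\alpha)$, whose maximum value equals $\omega\!\left(\tfrac{(1-\delta_4^k)\lambda_k}{1+\delta_0^k}\right)$. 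Substituting back gives the claimed descent property.

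\textbf{Part (b).} I would sum the descent inequality of (a) from $i=0$ to $i=k$, use $F(x^{k+1})\ge F^{\star}$, and pass to the limit. Since $\sum_{i=0}^{\infty}\delta_1^i<+\infty$, we obtain $\sum_{i=0}^{\infty}\omega\!\left(\tfrac{(1-\delta_4^i)\lambda_i}{1+\delta_0^i}\right)<+\infty$, so the general term tends to zero. With $\delta_0^k\in[0,1]$ and $\delta_4^k\le\bar{\delta}_4<1$ we have the uniform lower bound $\tfrac{(1-\delta_4^k)\lambda_k}{1+\delta_0^k}\ge \tfrac{1-\bar{\delta}_4}{2}\lambda_k$; since $\omega$ is continuous, strictly increasing, and $\omega(0)=0$, this forces $\lambda_k\to 0$.

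\textbf{Part (c).} The plan here is to reuse the estimate \eqref{eq:th9_proof2} of the proof of Theorem~\ref{th:global_convergence3}(c), applied at the generic index $k$ instead of $k_{*}$: the stopping criterion \eqref{eq:inexact_subp} gives $\tnorm{\nabla f(z^k)+r(z^k)}_{x^k}^{\ast}\le (1+\delta_4^k)\lambda_k+\tnorm{\nabla f(z^k)-g(x^k)}_{x^k}^{\ast}$. To bound the last term I would invoke \eqref{eq:global_inexact_oracle_pro6a} with $(\delta_0,\delta_1)=(\delta_0^k,\delta_1^k)$ and the same quadratic-inequality trick as in the proof of Theorem~\ref{th:global_convergence3}(c); since summability of $\delta_1^k$ implies $\delta_1^k\to 0$, and since $\lambda_k\to 0$ by (b), both quantities on the right tend to zero, yielding $\inf_{\nabla F(z^k)\in\partial F(z^k)}\tnorm{\nabla F(z^k)}_{x^k}^{\ast}\to 0$.

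\textbf{Expected main difficulty.} Parts (a) and (b) are essentially bookkeeping. The delicate step is (c): the quadratic inequality that produced the clean bound $\BigO{\lambda_{k_{*}}+\sqrt{\delta_1}}$ in Theorem~\ref{th:global_convergence3}(c) was derived under $\lambda_{k_{*}}\le\tfrac{1}{4(1+\delta_0)}$ and $\delta_1\le 1$; in the adaptive setting I need to argue these smallness conditions hold for all sufficiently large $k$, which follows from $\lambda_k\to 0$ and $\delta_1^k\to 0$ but must be invoked explicitly before solving the quadratic. Once this is in place, passing $k\to\infty$ gives the final claim.
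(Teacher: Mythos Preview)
Your proposal is correct and matches the paper's approach exactly: the paper states that the proof ``is similar to Theorem~\ref{th:global_convergence3} and is omitted,'' and what you outline is precisely that---replaying the one-step descent calculation with $k$-indexed accuracies, telescoping and using $\sum_k\delta_1^k<\infty$ for (b), and reusing the perturbation bound from Theorem~\ref{th:global_convergence3}(c) together with $\lambda_k\to 0$ and $\delta_1^k\to 0$ for (c). Your identification of the only nontrivial point---that the smallness hypotheses $\lambda_k\le\tfrac{1}{4(1+\delta_0^k)}$ and $\delta_1^k\le 1$ needed for the quadratic-inequality step are only guaranteed for $k$ large---is accurate and handled correctly.
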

\beforesubsec
\subsection{\bf \ref{eq:prox_nt_scheme}  with local inexact oracle: Local convergence under self-concordance}
\aftersubsec
In this subsection, we analyze local convergence of \eqref{eq:prox_nt_scheme} for solving \eqref{eq:c_sc_min} with local inexact oracle under the self-concordance of $f$.
The following lemma is key to our analysis, whose proof is deferred to Appendix \ref{apdx:le:key_est_for_prox_nt_scheme_ine}.

\begin{lemma}\label{le:key_est_for_prox_nt_scheme_ine}
Let $\sets{x^k}$ be the sequence generated by \eqref{eq:prox_nt_scheme} and  $\lambda_k$ be defined by \eqref{eq:step_size2}.
If $\alpha_k\lambda_k + \delta_3 < 1$ and $\delta_4 < 1$, then we have:
\begin{equation}\label{eq:key_local_estimate_com_ine}
\begin{array}{lcl}
\lambda_{k+1} & \leq & \frac{\delta_2}{1-\delta_4} + \frac{(1 + \delta_3)}{(1-\delta_4)\left( 1 - \delta_3 - \alpha_k\lambda_k\right)} \cdot \left[\delta_2 +  \delta_4\lambda_k + 3(1-\alpha_k)\lambda_k \right]  \vspace{1ex}\\
& & + {~} \frac{1}{(1-\delta_3)(1-\delta_4)\left( 1 - \delta_3 - \alpha_k\lambda_k\right)} \cdot \left[ \alpha_k(2+\delta_3)\delta_3\lambda_k   + \frac{\alpha_k^2\lambda_k^2}{1 - \delta_3 - \alpha_k\lambda_k} \right].
\end{array}
\end{equation}
\end{lemma}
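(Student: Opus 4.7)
The strategy is the standard inexact proximal-Newton comparison: introduce the exact scaled proximal Newton step at $x^{k+1}$,
\begin{equation*}
\tilde{z}^{k+1} := \Pc_{x^{k+1}}\bigl(x^{k+1} - H(x^{k+1})^{-1}g(x^{k+1})\bigr),
\end{equation*}
and sandwich $\lambda_{k+1} = \tnorm{z^{k+1}-x^{k+1}}_{x^{k+1}}$ between the inner-solve error $\tnorm{z^{k+1}-\tilde{z}^{k+1}}_{x^{k+1}}$ (controlled by \eqref{eq:inexact_subp}) and $\tnorm{\tilde{z}^{k+1}-z^k}_{x^{k+1}} + \tnorm{z^k-x^{k+1}}_{x^{k+1}}$ (controlled by the nonexpansiveness \eqref{eq:nonexpansiveness} of $\Pc_{x^{k+1}}$ together with Lemma~\ref{le:norm_relation}). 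The standing assumption $\alpha_k\lambda_k + \delta_3 < 1$ gives $\tnorm{x^{k+1}-x^k}_{x^k} = \alpha_k\lambda_k < 1-\delta_3$, so both lines of \eqref{eq:key_bound3} apply with $(x,y)=(x^k,x^{k+1})$.

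\textbf{Step 1 (two applications of nonexpansiveness).} The stopping rule \eqref{eq:inexact_subp} rewrites as $z^{k+1} = \Pc_{x^{k+1}}\bigl(x^{k+1} - H(x^{k+1})^{-1}g(x^{k+1}) + H(x^{k+1})^{-1}\nu^{k+1}\bigr)$, so \eqref{eq:nonexpansiveness} and $\tnorm{\nu^{k+1}}^{\ast}_{x^{k+1}}\leq \delta_4\lambda_{k+1}$ give $\tnorm{z^{k+1}-\tilde{z}^{k+1}}_{x^{k+1}} \leq \delta_4\lambda_{k+1}$. The triangle inequality together with $z^k - x^{k+1} = (1-\alpha_k)d^k$ then yields
\begin{equation*}
(1-\delta_4)\lambda_{k+1} \leq \tnorm{\tilde{z}^{k+1}-z^k}_{x^{k+1}} + (1-\alpha_k)\tnorm{d^k}_{x^{k+1}}.
\end{equation*}
Set $r^k := \nu^k - g(x^k) - H(x^k)d^k \in \partial R(z^k)$; using the defining equivalence $\Pc_{x^{k+1}}(u)=w \Leftrightarrow H(x^{k+1})(u-w)\in \partial R(w)$ one checks that $z^k = \Pc_{x^{k+1}}\bigl(z^k + H(x^{k+1})^{-1}r^k\bigr)$, and a second application of \eqref{eq:nonexpansiveness} gives
\begin{equation*}
\tnorm{\tilde{z}^{k+1}-z^k}_{x^{k+1}} \leq \tnorm{H(x^{k+1})(x^{k+1}-z^k) - g(x^{k+1}) - r^k}^{\ast}_{x^{k+1}}.
\end{equation*}
Substituting $x^{k+1}-z^k = -(1-\alpha_k)d^k$ and the formula for $r^k$, then inserting and subtracting $\nabla f(x^k)$, $\nabla f(x^{k+1})$ and $\alpha_k\nabla^2 f(x^k)d^k$, the vector inside the dual norm splits as
\begin{align*}
 & [g(x^k) - \nabla f(x^k)] + [\nabla f(x^{k+1}) - g(x^{k+1})] - [\nabla f(x^{k+1}) - \nabla f(x^k) - \alpha_k\nabla^2 f(x^k)d^k] \\
 & \quad + \alpha_k[H(x^k) - \nabla^2 f(x^k)]d^k + (1-\alpha_k)[H(x^k) - H(x^{k+1})]d^k - \nu^k.
\end{align*}

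\textbf{Step 2 (term-by-term bounds in $\tnorm{\cdot}^{\ast}_{x^{k+1}}$).} Denote $\kappa := (1+\delta_3)/(1-\delta_3-\alpha_k\lambda_k)$, the one-step norm-change factor from the first line of \eqref{eq:key_bound3}. The two gradient-oracle residuals contribute $\delta_2$ (at $x^{k+1}$, with no norm change) and $\kappa\delta_2$ (at $x^k$) via \eqref{eq:local_inexact_oracle}; the stopping residual gives $\tnorm{\nu^k}^{\ast}_{x^{k+1}}\leq \kappa\delta_4\lambda_k$ via \eqref{eq:inexact_subp}. The Hessian-approximation term $\alpha_k[H(x^k)-\nabla^2 f(x^k)]d^k$ is handled directly by the second line of \eqref{eq:key_bound3} with $(x,y,v)=(x^k,x^{k+1},d^k)$, yielding $\alpha_k(2+\delta_3)\delta_3\lambda_k / [(1-\delta_3)(1-\delta_3-\alpha_k\lambda_k)]$. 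For the Hessian-variation term, the triangle inequality plus the first line of \eqref{eq:key_bound3} gives $\tnorm{H(x^k)d^k}^{\ast}_{x^{k+1}} \leq \kappa\lambda_k$ and $\tnorm{H(x^{k+1})d^k}^{\ast}_{x^{k+1}} = \tnorm{d^k}_{x^{k+1}} \leq \kappa\lambda_k$, hence $(1-\alpha_k)\tnorm{[H(x^k)-H(x^{k+1})]d^k}^{\ast}_{x^{k+1}} \leq 2\kappa(1-\alpha_k)\lambda_k$. Finally, the exact self-concordance residual is controlled by the classical bound $\norms{\nabla f(y)-\nabla f(x)-\nabla^2 f(x)(y-x)}^{\ast}_x \leq \norms{y-x}_x^2/(1-\norms{y-x}_x)$ of \cite{Nesterov2004} at $(x,y)=(x^k,x^{k+1})$; combining this with $\norms{d^k}_{x^k}\leq \lambda_k/(1-\delta_3)$ from Lemma~\ref{le:norm_relation} and the self-concordance Hessian inequality $\nabla^2 f(x^{k+1})\succeq (1-\norms{x^{k+1}-x^k}_{x^k})^2\nabla^2 f(x^k)$ (which gives the sharper conversion $\norms{v}^{\ast}_{x^{k+1}}\leq \tfrac{1-\delta_3}{1-\delta_3-\alpha_k\lambda_k}\norms{v}^{\ast}_{x^k}$) yields the $\alpha_k^2\lambda_k^2 / [(1-\delta_3)(1-\delta_3-\alpha_k\lambda_k)^2]$ piece.

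\textbf{Assembly and main obstacle.} The isolated $\delta_2/(1-\delta_4)$ term in \eqref{eq:key_local_estimate_com_ine} comes from the residual at $x^{k+1}$. The three $(1-\alpha_k)\lambda_k$ contributions --- the direct piece $(1-\alpha_k)\tnorm{d^k}_{x^{k+1}}\leq \kappa(1-\alpha_k)\lambda_k$ and the two pieces from $(1-\alpha_k)\tnorm{[H(x^k)-H(x^{k+1})]d^k}^{\ast}_{x^{k+1}}\leq 2\kappa(1-\alpha_k)\lambda_k$ --- combine with $\kappa\delta_2$ and $\kappa\delta_4\lambda_k$ into the middle bracket $\kappa(\delta_2 + \delta_4\lambda_k + 3(1-\alpha_k)\lambda_k)$. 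The Hessian-approximation and self-concordance contributions assemble into the final bracket with the extra $1/(1-\delta_3)$ factor. Dividing through by $(1-\delta_4)$ produces exactly \eqref{eq:key_local_estimate_com_ine}. The hardest part will be the bookkeeping in Step~2: choosing the base point for each residual, selecting the correct line of \eqref{eq:key_bound3} at each norm change, and in particular routing the self-concordance residual through $\nabla^2 f(x^{k+1})$ (instead of $H(x^{k+1})$) so that the quadratic pieces carry the denominator $(1-\delta_3)(1-\delta_3-\alpha_k\lambda_k)^2$ rather than the larger $(1-\delta_3)^2(1-\delta_3-\alpha_k\lambda_k)^2$ that a naive application of $\kappa$ would produce.
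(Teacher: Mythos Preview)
Your proposal is correct and follows essentially the same route as the paper. The only organizational difference is that you peel off the $\nu^{k+1}$ contribution by introducing the auxiliary point $\tilde{z}^{k+1}$ and applying nonexpansiveness twice, whereas the paper compares $z^{k+1}$ and $z^k$ directly as two $\Pc_{x^{k+1}}$-images and carries $\nu^{k+1}-\nu^k$ as a single block; likewise, the paper obtains the three $(1-\alpha_k)\lambda_k$ contributions by writing $\tnorm{H_{k+1}(x^{k+1}-z^k)}^{\ast}_{x^{k+1}}$ and $\tnorm{H_k(x^{k+1}-z^k)}^{\ast}_{x^{k+1}}$ separately (plus the direct $\tnorm{x^{k+1}-z^k}_{x^{k+1}}$ piece), while you group $(1-\alpha_k)[H(x^k)-H(x^{k+1})]d^k$ and bound it by $2\kappa(1-\alpha_k)\lambda_k$ via the triangle inequality---but the resulting term-by-term bounds, including the self-concordance residual routed through $\norms{\cdot}^{\ast}_{x^{k+1}}$ with the extra $1/(1-\delta_3)$ factor, coincide exactly.
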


Based on Lemma \ref{le:key_est_for_prox_nt_scheme_ine} and using  either a full step or a damped step scheme  we can prove  local convergence rates for  \eqref{eq:prox_nt_scheme}  in the following theorems.

\begin{theorem}\label{th:local_covergence1}
Let $\sets{x^k}$ be the sequence generated by \eqref{eq:prox_nt_scheme} using  the full step scheme, i.e., $\alpha_k := 1$ for all $k\geq 0$.
For $\delta_3$  in \eqref{eq:local_inexact_oracle} and $\delta_4$ in \eqref{eq:inexact_subp}, if $0 \leq \delta_3, \delta_4 \leq \delta \leq \frac{1}{100}$ and $0 \leq \lambda_k \leq \frac{1}{20}$, then
\begin{equation}\label{eq:main_bound_th1}
\lambda_{k+1} \leq \frac{1}{10}\big(12\lambda_k^2 + 33\delta\lambda_k + 21\delta_2\big).
\end{equation}
Moreover, for any $\varepsilon \in (0, \frac{1}{20})$ and $x^0\in\dom{F}$ such that $\lambda_0 \leq \frac{1}{20}$, the following statements hold:
\begin{compactitem}
\item[$\mathrm{(a)}$]
If we choose $0 \leq \delta_3, \delta_4 \leq \frac{1}{100}$ and $\delta_2 := \frac{5\varepsilon}{24}$, then after at most $k := \left\lfloor \frac{9}{20}\ln(\frac{1}{\varepsilon})\right\rfloor$ iterations, we have $\lambda_k \leq \varepsilon$ $($i.e., locally linear convergence rate$)$.

\vspace{1ex}
\item[$\mathrm{(b)}$]
If we choose $0 \leq \delta_3, \delta_4 \leq \frac{1}{200}\big(\frac{3}{5}\big)^{\ln(\frac{1}{\varepsilon})}$ and $0 \leq \delta_2 \leq  \frac{3}{5}\big(\frac{1}{5}\big)^{\ln(\frac{1}{\varepsilon})}$, then after at most $k := \left\lfloor \log_{3/2}(\ln(\frac{1}{\varepsilon}))\right\rfloor$ iterations, we get $\lambda_k \leq \varepsilon$ $($i.e., locally  superlinear convergence rate$)$.

\vspace{1ex}
\item[$\mathrm{(c)}$]
If we choose $0 \leq \delta_3, \delta_4\leq \frac{1}{10}(\frac{7}{20})^{\ln(\frac{1}{\varepsilon})}$ and $0 \leq \delta_2 \leq 28(\frac{3}{25})^{\ln(\frac{1}{\varepsilon})}$, then after at most $k := \left\lfloor \log_2(\ln(\frac{1}{\varepsilon}))\right\rfloor$ iterations, we obtain $\lambda_k \leq \varepsilon$ $($i.e., locally  quadratic convergence rate$)$.

\end{compactitem}
\vspace{1ex}
In addition, we have
\begin{equation*}
\inf_{\nabla{F}(z^k)\in\partial{F}(z^k)}\tnorm{\nabla{F}(z^k)}_{x^k}^{\ast} \leq \BigO{ \lambda_k + \sqrt{\delta_1} }.
\end{equation*}
Hence, if $\lambda_k \leq \BigO{\varepsilon}\leq \frac{1}{4(1+\delta_0)}$ and $\delta_1 \leq \BigO{\varepsilon^2} \leq 1$, then $\inf_{\nabla{F}(z^k) \in\partial{F}(z^k)}\tnorm{\nabla{F}(z^k)}_{x^k}^{\ast} \leq \BigO{\varepsilon}$.
\end{theorem}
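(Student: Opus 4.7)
The plan is to derive the recursion \eqref{eq:main_bound_th1} directly by specializing Lemma \ref{le:key_est_for_prox_nt_scheme_ine} to $\alpha_k=1$, and then to establish parts (a)--(c) by tracking the resulting one-step inequality under three different regimes for the tolerance triple $(\delta_2,\delta_3,\delta_4)$. Finally, the bound on $\inf_{\nabla F(z^k)\in\partial F(z^k)}\tnorm{\nabla F(z^k)}_{x^k}^{\ast}$ will be recovered from the same argument used in Theorem \ref{th:global_convergence3}(c).

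\emph{Step 1: Derivation of \eqref{eq:main_bound_th1}.} Setting $\alpha_k=1$ in \eqref{eq:key_local_estimate_com_ine} removes the $3(1-\alpha_k)\lambda_k$ term. Under the standing assumptions $\delta_3,\delta_4\leq \delta\leq \tfrac{1}{100}$ and $\lambda_k\leq \tfrac{1}{20}$, I would lower-bound the denominators uniformly: $1-\delta_4\geq \tfrac{99}{100}$, $1-\delta_3\geq \tfrac{99}{100}$, and $1-\delta_3-\lambda_k\geq \tfrac{94}{100}$. Plugging these bounds into the three summands of \eqref{eq:key_local_estimate_com_ine} and upper-bounding $1+\delta_3\leq \tfrac{101}{100}$ and $2+\delta_3\leq \tfrac{201}{100}$ yields
\[
\lambda_{k+1}\;\leq\; \tfrac{100}{99}\delta_2 \;+\; \tfrac{101\cdot 100}{99\cdot 94}(\delta_2+\delta_4\lambda_k) \;+\; \tfrac{100^2}{99^2\cdot 94}\Big[(2+\delta_3)\delta_3\lambda_k + \tfrac{100}{94}\lambda_k^2\Big],
\]
and a direct numerical estimation of the constants gives the claimed bound $\lambda_{k+1}\leq \tfrac{1}{10}(12\lambda_k^2+33\delta\lambda_k+21\delta_2)$.

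\emph{Step 2: Convergence rates (a)--(c).} From \eqref{eq:main_bound_th1} and $\lambda_k\leq \tfrac{1}{20}$ one gets $12\lambda_k^2\leq \tfrac{12}{20}\lambda_k$, so the recursion implies $\lambda_{k+1}\leq \tfrac{1}{10}\bigl((\tfrac{12}{20}+33\delta)\lambda_k+21\delta_2\bigr)$, which first of all confirms the invariant $\lambda_{k+1}\leq \tfrac{1}{20}$ needed to propagate the induction. For part (a), taking $\delta=\tfrac{1}{100}$ yields a linear contraction of the form $\lambda_{k+1}\leq c\lambda_k+b\varepsilon$ with $c<1$ and fixed point strictly below $\varepsilon$ when $\delta_2=\tfrac{5\varepsilon}{24}$; the stated iteration count follows from $c^k\lambda_0\leq \varepsilon$. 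For parts (b)--(c), I would choose $\delta_3,\delta_4,\delta_2$ as geometric sequences indexed by the iterate (matching the superlinear and quadratic rates, respectively) and verify by induction that $\lambda_{k+1}$ is dominated by the quadratic-in-$\lambda_k$ term in \eqref{eq:main_bound_th1}. In the superlinear case the recursion collapses to $\lambda_{k+1}\leq C\lambda_k^{3/2}$, and in the quadratic case to $\lambda_{k+1}\leq C\lambda_k^{2}$, from which the stated iteration counts $\lfloor\log_{3/2}\ln(1/\varepsilon)\rfloor$ and $\lfloor\log_2\ln(1/\varepsilon)\rfloor$ follow by unrolling.

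\emph{Step 3: Norm bound on $\nabla F(z^k)$.} The optimality condition \eqref{eq:inexact_subp} at iteration $k$ reads $\nabla f(z^k)+r(z^k) = -H(x^k)(z^k-x^k)+(\nabla f(z^k)-g(x^k))+\nu^k$ for some $r(z^k)\in\partial R(z^k)$. Taking the $\tnorm{\cdot}_{x^k}^{\ast}$-norm and using \eqref{eq:inexact_subp} together with \eqref{eq:global_inexact_oracle_pro6a} (as in Theorem \ref{th:global_convergence3}(c)) yields $\tnorm{\nabla F(z^k)}_{x^k}^{\ast}\leq \mathcal{O}(\lambda_k+\sqrt{\delta_1})$. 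Consequently $\lambda_k\leq\mathcal{O}(\varepsilon)$ and $\delta_1\leq\mathcal{O}(\varepsilon^2)$ give the desired $\mathcal{O}(\varepsilon)$ bound.

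\emph{Expected main obstacle.} The algebra of Step 1 is mechanical but needs to be carried out carefully so the numerical constants fit within $\tfrac{1}{10}(12,33,21)$; the real subtlety is in Step 2, parts (b) and (c), where the adaptively chosen tolerances $\delta_2,\delta_3,\delta_4$ must be shown to shrink faster than $\lambda_k$ does, so that the linear-in-$\lambda_k$ term $33\delta\lambda_k$ and the additive term $21\delta_2$ in \eqref{eq:main_bound_th1} do not spoil the superlinear/quadratic asymptotics. This requires selecting the geometric ratios in $\delta_3,\delta_4,\delta_2$ so that at each iteration they are dominated by the current $\lambda_k^2$, and verifying the resulting nonstandard induction.
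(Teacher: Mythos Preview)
Your plan matches the paper's proof. Step~1 is exactly what the paper does (it first uses monotonicity of the right-hand side of \eqref{eq:key_local_estimate_com_ine} in $\delta_3,\delta_4$ to replace both by $\delta$, then plugs in $\delta\leq\tfrac{1}{100}$ and $\lambda_k\leq\tfrac{1}{20}$ to obtain the coefficients $1.2,\ 3.3,\ 2.1$), and Step~3 is verbatim the argument from Theorem~\ref{th:global_convergence3}(c), as you say.

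One correction for Step~2(b)--(c): the theorem fixes $\delta_2,\delta_3,\delta_4$ as constants depending only on the target $\varepsilon$, not as sequences indexed by the iterate. The paper's argument is to posit that $\delta\leq\tfrac{1}{10}\lambda_j^{1/2}$ and $\delta_2\leq\lambda_j^{3/2}$ (for (b)), respectively $\delta\leq\tfrac{1}{5}\lambda_j$ and $\delta_2\leq 5\lambda_j^2$ (for (c)), hold at each step~$j$; under these hypotheses the recursion collapses to $\lambda_{j+1}\leq\tfrac{27}{10}\lambda_j^{3/2}$ (resp.\ $\lambda_{j+1}\leq 12.86\,\lambda_j^2$), which one unrolls to bound $\lambda_j$ explicitly, and then back-solves for how small the \emph{fixed} $\delta,\delta_2$ must be so that the per-step hypotheses remain valid until $\lambda_j\leq\varepsilon$. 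Since $\lambda_j$ is decreasing, the binding constraint is at the last iterate before termination, which is how the stated bounds such as $\delta\leq\tfrac{1}{200}(\tfrac35)^{\ln(1/\varepsilon)}$ arise. Your adaptive-sequence phrasing would also work, but it proves a slightly different statement than the one claimed.
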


\begin{proof}
By Lemma~\ref{le:apdx-monotone}, the right-hand side $H_1(\cdot)$ of  \eqref{eq:key_local_estimate_com_ine} with $\alpha_k = 1$ is monotonically increasing w.r.t. each variable, in particular, w.r.t. $\delta_3,\delta_4\in [0, 1)$.
If $0 \leq \delta_3 \leq \delta$, and $0 \leq \delta_4\leq\delta$ for some $\delta \in [0, 1)$, then \eqref{eq:key_local_estimate_com_ine} can be overestimated as
\begin{equation}\label{eq:th4_proof1} 
\begin{array}{ll}
\lambda_{k+1}  &\leq \frac{(2-\lambda_k)\delta_2}{(1-\delta)(1-\delta-\lambda_k)} +  \frac{(3 + \delta -\delta^2)\delta\lambda_k}{(1-\delta)^2(1-\delta-\lambda_k)} + \frac{\lambda_k^2}{(1-\delta)^2(1-\delta-\lambda_k)^2}.
\end{array}
\end{equation}
Now, assume that $\delta := \frac{1}{100}$ and $\lambda_k \leq \frac{1}{20}$.
Since the right-hand side of the last estimate is nondecreasing in $\lambda_k$, we have $\frac{(2-\lambda_k)\delta_2}{(1-\delta)(1-\delta-\lambda_k)}\leq 2.1\delta_2$, $\frac{(3 + \delta -\delta^2)\delta\lambda_k}{(1-\delta)^2(1-\delta-\lambda_k)}\leq 3.3\delta\lambda_k$, and $ \frac{\lambda_k^2}{(1-\delta)^2(1-\delta-\lambda_k)^2} \leq 1.2\lambda^2_k$.
Combining these three estimates, we can overestimate the above inequality as
\begin{equation*} 
\lambda_{k+1} \leq \frac{1}{10}\big(12\lambda_k^2 + 33\delta\lambda_k + 21\delta_2\big),
\end{equation*}
which is exactly \eqref{eq:main_bound_th1}.

$\mathrm{(a)}$~Since $\lambda_k \leq \frac{1}{20}$ and $\delta := \frac{1}{100}$, we can numerically overestimate \eqref{eq:main_bound_th1} as $\lambda_{k+1} \leq 0.1\lambda_k + 2.1\delta_2$.
Since $\lambda_0 \leq \frac{1}{20}$, by induction, we have $\lambda_k \leq (0.1)^k\lambda_0 + \frac{2.1\delta_2}{1-0.1} \leq \frac{(0.1)^k}{20} + 2.4\delta_2$.
Assume that $\delta_2 := \frac{\varepsilon}{4.8} = \frac{5\varepsilon}{24}$.
Then, from the last estimate, if we impose $\frac{(0.1)^k}{20} \leq \frac{\varepsilon}{2}$, then we have $\lambda_k \leq \varepsilon$.
This condition leads to $k \geq 0.45\ln(\frac{1}{\varepsilon}) - 1$.
Therefore, we can choose $k := \left\lfloor \frac{9}{20}\ln(\frac{1}{\varepsilon})\right\rfloor$. 

$\mathrm{(b)}$~
After $k$ iterations  of the full step scheme \eqref{eq:prox_nt_scheme}, we have $\delta \leq \frac{1}{10}\lambda_k^{\frac{1}{2}}$ and $\delta_2 \leq \lambda_k^{\frac{3}{2}}$.
Using these inequalities, and $\lambda_k \leq \frac{1}{20}$, we can numerically overestimate \eqref{eq:th4_proof1} as 
\begin{equation*}
\lambda_{k+1} \leq 2\lambda_k^{\frac{3}{2}}  +  0.31\lambda_k^{\frac{3}{2}} + 0.3\lambda_k^{\frac{3}{2}} \leq  \frac{27}{10}\lambda_k^{\frac{3}{2}}.
\end{equation*}
This is equivalent to  $c\lambda_{k+1} \leq (c\lambda_k)^{\frac{3}{2}}$, where $c := (\frac{27}{10})^2$.
By induction, we have $c\lambda_k \leq (c\lambda_0)^{(\frac{3}{2})^k} \leq (\frac{729}{2000})^{(\frac{3}{2})^k}$, which leads to $\lambda_k \leq \frac{100}{729}(\frac{729}{2000})^{(\frac{3}{2})^k}$.

Given $\varepsilon \in (0, \frac{1}{20})$, to obtain $\lambda_k \leq \varepsilon$, we need to impose $(\frac{3}{2})^k \geq \ln(\frac{1}{\varepsilon}) - 1$.
Hence, we can choose $k := \left\lfloor \log_{\frac{3}{2}}(\ln(\frac{1}{\varepsilon}))\right\rfloor$ as the maximum number of iterations.
Moreover, to bound $\delta$ and $\delta_2$ in terms of $\varepsilon$, we use $(\frac{3}{2})^k \geq \ln(\frac{1}{\varepsilon})-1$ to show that these quantities can be bounded by $\delta := \frac{1}{200}(\frac{3}{5})^{\ln(\frac{1}{\varepsilon})}$, and $\delta_2 := \frac{3}{5}(\frac{1}{5})^{\ln(\frac{1}{\varepsilon})}$, respectively.

$\mathrm{(c)}$~
Similar to $\mathrm{(b)}$, we assume that we run the full step scheme \eqref{eq:prox_nt_scheme} for $k$ iterations.
Then, after $k$ iterations, we have $\delta \leq \frac{\lambda_k}{5}$ and $\delta_2 \leq  5\lambda_k^2$.
Using these inequalities, and $\lambda_k \leq \frac{1}{10}$, we can numerically overestimate  \eqref{eq:th4_proof1} as 
\begin{equation*}
\lambda_{k+1} \leq  11.02\lambda_k^2 +  0.72\lambda_k^2 + 1.12\lambda_k^2 \leq 12.86\lambda_k^2.
\end{equation*}
This is equivalent to  $c\lambda_{k+1} \leq (c\lambda_k)^2$, where $c := 12.86$.
By induction, we have $c\lambda_k \leq (c\lambda_0)^{2^k} \leq (0.65)^{2^k}$, which leads to $\lambda_k \leq \frac{1}{12.86}(0.65)^{2^k}$.

Given $\varepsilon \in (0, \frac{1}{20})$, to obtain $\lambda_k \leq \varepsilon$, we need to impose $2^k \geq 2.4\ln(\frac{1}{\varepsilon})-5$.
Hence, we can choose $k := \left\lfloor \log_2(\ln(\frac{1}{\varepsilon}))\right\rfloor$ as the maximum number of iterations.
Moreover, to bound $\delta$ and $\delta_2$ in terms of $\varepsilon$, we use $2^k \geq 2.4\ln(\frac{1}{\varepsilon})-5$ to show that these quantities can be bounded by $\delta := \frac{1}{10}(\frac{7}{20})^{\ln(\frac{1}{\varepsilon})}$  and $\delta_2 := 28(\frac{3}{25})^{\ln(\frac{1}{\varepsilon})}$, respectively.

The last statement of this theorem follows from the same argument as  in Theorem~\ref{th:global_convergence3} but for the convergent sequence $\sets{z^k}$.
\Eproof
\end{proof}

As a concrete example, assume that we fix the target accuracy $\varepsilon := 10^{-4}$. 
Then $\delta_2$ in Statement $\mathrm{(a)}$ becomes $\delta_2 \leq 2.08\times 10^{-5}$ and $k = 5$.
For  Statement $\mathrm{(b)}$, we have $\delta_3, \delta_4 \leq 4.5\times 10^{-5}$, $\delta_2 \leq 2.2\times 10^{-7}$, and $k = 6$.
For  Statement $\mathrm{(c)}$, we have $\delta_3, \delta_4 \leq 6.3\times 10^{-6}$, $\delta_2 \leq 5\times 10^{-8}$, and $k = 4$.
In Statement $\mathrm{(c)}$,  the accuracy $\delta_2$ is  small due to the choice $\delta_2 \leq 5\lambda_k^2$.
Practically, we can always choose $\delta_2 \leq c\lambda_k^2$ for some large $c$ to relax this accuracy.

\begin{theorem}\label{th:local_covergence2}
Let $\sets{x^k}$ be the sequence generated by \eqref{eq:prox_nt_scheme} using the damped stepsize $\alpha_k$ in \eqref{eq:step_size2}.
For $\delta_0$ in \eqref{eq:global_inexact_oracle}, $\delta_3$ in \eqref{eq:local_inexact_oracle}, and $\delta_4$ in \eqref{eq:inexact_subp}, if  $\lambda_k \leq \frac{1}{20}$, $0 \leq \delta_3, \delta_4 \leq \delta \leq \frac{1}{100}$, and $\delta_0 = \delta$, then
\begin{equation}\label{eq:th3_key_est}
\lambda_{k+1} \leq \frac{1}{10}\big(41\lambda_k^2 + 125\delta\lambda_k + 21\delta_2\big).
\end{equation}
Moreover, for any $\varepsilon \in (0, \frac{1}{20})$ and $x^0\in\dom{F}$ such that $\lambda_0 \leq \frac{1}{10}$, the following statements hold:
\begin{compactitem}
\item[$\mathrm{(a)}$]
If we choose $0 \leq \delta_3, \delta_4 \leq \frac{1}{100}$ and $\delta_2 := \frac{5\varepsilon}{32}$, then after at most $k := \left\lfloor \ln(\frac{1}{\varepsilon})\right\rfloor$ iterations, we obtain $\lambda_k \leq \varepsilon$ $($i.e., locally linear convergence rate$)$.

\vspace{1ex}
\item[$\mathrm{(b)}$]
If we choose $0 \leq \delta_3, \delta_4 \leq \frac{1}{4}(\frac{3}{5})^{\ln(\frac{1}{\varepsilon})}$ and $0 \leq \delta_2 \leq \frac{37}{20}(\frac{11}{50})^{\ln(\frac{1}{\varepsilon})}$, then after at most $k := \left\lfloor \log_{3/2}(\ln(\frac{1}{\varepsilon}))\right\rfloor$ iterations, we get $\lambda_k \leq \varepsilon$ $($i.e., locally superlinear  convergence rate$)$.

\vspace{1ex}
\item[$\mathrm{(c)}$]
If we choose $0 \leq \delta_3, \delta_4\leq \frac{1}{10}(\frac{19}{50})^{\ln(\frac{1}{\varepsilon})}$ and $0 \leq \delta_2 \leq \frac{1}{5}(\frac{13}{50})^{\ln(\frac{1}{\varepsilon})}$, then after at most $k := \left\lfloor \log_2(\ln(\frac{1}{\varepsilon}))\right\rfloor$ iterations, we have $\lambda_k \leq \varepsilon$ $($i.e., locally  quadratic convergence rate$)$.

\end{compactitem}
\vspace{1ex}
In addition, we have
\begin{equation*}
\inf_{\nabla{F}(z^k)\in\partial{F}(z^k)}\tnorm{\nabla{F}(z^k)}_{x^k}^{\ast} \leq \BigO{ \lambda_k + \sqrt{\delta_1} }.
\end{equation*}
Hence, if $\lambda_k \leq \BigO{\varepsilon} \leq \frac{1}{4(1+\delta_0)}$ and $\delta_1 \leq \BigO{\varepsilon^2} \leq 1$, then $\inf_{\nabla{F}(z^k) \in\partial{F}(z^k)}\tnorm{\nabla{F}(z^k)}_{x^k}^{\ast} \leq \BigO{\varepsilon}$.
\end{theorem}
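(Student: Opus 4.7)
The plan is to mirror the structure of the proof of Theorem~\ref{th:local_covergence1}, but carefully track the extra terms that appear when $\alpha_k<1$. The starting point is the one-step contraction in Lemma~\ref{le:key_est_for_prox_nt_scheme_ine}. For the full-step case, $1-\alpha_k=0$ killed the middle summand $3(1-\alpha_k)\lambda_k$; here it is the term we must estimate. Substituting $\alpha_k$ from \eqref{eq:step_size2} with $\delta_0=\delta_3=\delta_4=\delta$ gives
\begin{equation*}
1-\alpha_k \;=\; \frac{3\delta+\delta^2+(1+\delta)(1-\delta)\lambda_k}{(1+\delta)\bigl(1+\delta+(1-\delta)\lambda_k\bigr)},
\end{equation*}
so under $\lambda_k\le 1/20$ and $\delta\le 1/100$ I would upper-bound $(1-\alpha_k)\lambda_k$ by a quantity of order $3\delta\lambda_k+\lambda_k^2$, and $\alpha_k\lambda_k$ simply by $\lambda_k$. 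Using monotonicity of the right-hand side of \eqref{eq:key_local_estimate_com_ine} in each of its arguments (Lemma~\ref{le:apdx-monotone}) and these elementary bounds, the three summands collapse to numerical constants, producing the key inequality \eqref{eq:th3_key_est}: $\lambda_{k+1}\le\tfrac{1}{10}\bigl(41\lambda_k^2+125\delta\lambda_k+21\delta_2\bigr)$. The coefficient jumps from $12$ to $41$ (and from $33$ to $125$) precisely because of the $3(1-\alpha_k)\lambda_k$ contribution, which adds roughly $3\lambda_k^2$ and $9\delta\lambda_k$ after being multiplied by the factor $(1+\delta_3)/[(1-\delta_4)(1-\delta_3-\alpha_k\lambda_k)]\approx 1$.

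Once \eqref{eq:th3_key_est} is in hand, each of (a)--(c) is obtained by a straightforward recursive argument, exactly parallel to the three parts of Theorem~\ref{th:local_covergence1}, but with the new coefficients.  For (a), with $\delta\le 1/100$ fixed and $\lambda_k\le 1/20$, the quadratic term is dominated by the linear one and I get a linear recurrence $\lambda_{k+1}\le c_1\lambda_k+c_2\delta_2$ with $c_1<1$; choosing $\delta_2=5\varepsilon/32$ and iterating yields $\lambda_k\le\varepsilon$ after $\lfloor\ln(1/\varepsilon)\rfloor$ steps. For (b), I would shrink $\delta$ and $\delta_2$ with $\lambda_k$: take $\delta\le c\lambda_k^{1/2}$ and $\delta_2\le\lambda_k^{3/2}$, so that \eqref{eq:th3_key_est} becomes $\lambda_{k+1}\le C\lambda_k^{3/2}$, and then translate the resulting $(3/2)$-power iteration into the stated values of $\delta,\delta_2$ in terms of $\varepsilon$ via $k:=\lfloor\log_{3/2}\ln(1/\varepsilon)\rfloor$. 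For (c), the same strategy with $\delta\le c\lambda_k$ and $\delta_2\le c'\lambda_k^2$ yields $\lambda_{k+1}\le C\lambda_k^{2}$ and a doubly-exponential decay.

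The last inequality $\inf_{\nabla F(z^k)\in\partial F(z^k)}\tnorm{\nabla F(z^k)}_{x^k}^{\ast}\le\BigO{\lambda_k+\sqrt{\delta_1}}$ is identical to the one derived in parts (c)--(d) of Theorem~\ref{th:global_convergence3}: from the stopping criterion \eqref{eq:inexact_subp} we have $\nu^k-H(x^k)d^k-g(x^k)\in\partial R(z^k)$, so $\nabla f(z^k)+r(z^k)=\nu^k-H(x^k)d^k+(\nabla f(z^k)-g(x^k))$; bounding each of the three terms in $\tnorm{\cdot}_{x^k}^{\ast}$ and invoking Lemma~\ref{le:global_properties}(d) (to turn $\delta_1$ into $\sqrt{\delta_1}$ through the quadratic inequality trick used in Theorem~\ref{th:global_convergence3}) gives the claim. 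The main obstacle is algebraic: accurately controlling the damped-step residual $(1-\alpha_k)\lambda_k$ and verifying that the resulting coefficients $41$, $125$, $21$ in \eqref{eq:th3_key_est} are not too loose to support the subsequent recurrences. Once that inequality is sharp enough, the three convergence-rate statements reduce to routine numerical verifications.
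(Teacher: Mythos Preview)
Your proposal is correct and follows essentially the same approach as the paper: both start from Lemma~\ref{le:key_est_for_prox_nt_scheme_ine}, substitute the damped stepsize to expose the $(1-\alpha_k)\lambda_k$ contribution, invoke the monotonicity in Lemma~\ref{le:apdx-monotone} to plug in the numerical upper bounds $\delta_3,\delta_4\le\delta\le 1/100$ and $\lambda_k\le 1/20$, and arrive at \eqref{eq:th3_key_est}; the three parts (a)--(c) then follow by the same linear/power-$3/2$/quadratic recurrences you outline, and the final subgradient bound is indeed just a repetition of the argument from Theorem~\ref{th:global_convergence3}(c). The only cosmetic difference is that the paper packages the whole right-hand side into the auxiliary function $H_2$ of \eqref{eq:H_func2} and evaluates it numerically, whereas you isolate $1-\alpha_k$ explicitly to explain the jump in coefficients from $(12,33)$ to $(41,125)$.
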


\begin{proof}
Assume that $\lambda_k \leq \frac{1}{20}$ and $\delta := \frac{1}{100}$.
Note that since $\delta_0 = \delta$ in \eqref{eq:step_size2} and $\delta_4 \in [0, 1)$, it leads to $\alpha_k = \frac{1-\delta_4}{(1+\delta_0)\left(1 + \delta_0 + (1 - \rvnn{\delta_4})\lambda_k\right)} \in (0, 1]$.
With this choice of $\alpha_k$, we can see that the right-hand side of  \eqref{eq:key_local_estimate_com_ine} is equal to $H_2$ defined by \eqref{eq:H_func2} in Appendix~\ref{app:detail_proofs}.
For $0 \leq \delta_3, \delta_4 \leq \delta  \leq \frac{1}{100}$, by using the upper bound $\frac{1}{100}$ of $\delta_3$ and $\delta_4$ and $\frac{1}{20}$ of $\lambda_k$  into $H_2$ of Lemma~\ref{le:apdx-monotone},  we can upper bound $\lambda_{k+1}$ from \eqref{eq:key_local_estimate_com_ine} as
\begin{equation*} 
\lambda_{k+1} \leq 4.1\lambda_k^2 + 12.5\delta\lambda_k + 2.1\delta_2,
\end{equation*}
which proves \eqref{eq:th3_key_est}.

$\mathrm{(a)}$~
Using $\lambda_k \leq \frac{1}{20}$ and $\delta := \frac{1}{100}$, we first numerically overestimate \eqref{eq:th3_key_est} as $\lambda_{k+1} \leq 0.33\lambda_k + 2.1\delta_2$.
If we define $c := 0.33$, then $\lambda_{k+1} \leq c\lambda_k + 2.1\delta_2$.
Since $\lambda_0 \leq \frac{1}{20}$ and $c = 0.33$, by induction, we have $\lambda_k \leq c^k\lambda_0 + \frac{2.1\delta_2}{1-c} \leq \frac{(0.33)^k}{20} + 3.2\delta_2$.
Assume that $\delta_2 := \frac{\varepsilon}{6.4} = \frac{5\varepsilon}{32}$.
Then, from the last estimate, if we impose $\frac{(0.33)^k}{20} \leq \frac{\varepsilon}{2}$, then we have $\lambda_k \leq \varepsilon$.
This condition leads to $k \geq \ln(\frac{1}{\varepsilon}) - 2.7$.
Therefore, we can choose $k := \left\lfloor \ln(\frac{1}{\varepsilon})\right\rfloor$. 

$\mathrm{(b)}$~
We assume that we run the damped  step scheme \eqref{eq:prox_nt_scheme} for $k$ iterations.
Then, after $k$ iterations, we have $\delta \leq \frac{1}{10}\lambda_k^{\frac{1}{2}}$ and $\delta_2 \leq \frac{1}{2}\lambda_k^{\frac{3}{2}}$.
Using these inequalities, and $\lambda_k \leq \frac{1}{20}$, we can numerically overestimate \eqref{eq:main_bound_th1} as 
\begin{equation*}
\lambda_{k+1} ~\leq \left(4.1\lambda_k^{\frac{1}{2}} + \frac{12.5}{10} + \frac{2.1}{2}\right)\lambda_k^{\frac{3}{2}} \leq \left(\frac{4.1}{\sqrt{20}}  + 2.3\right)\lambda_k^{\frac{3}{2}}~ \leq  3.22\lambda_k^{\frac{3}{2}}.
\end{equation*}
This is equivalent to  $c\lambda_{k+1} \leq (c\lambda_k)^{\frac{3}{2}}$, where $c := 3.22^2$.
By induction, we have $c\lambda_k \leq (c\lambda_0)^{(\frac{3}{2})^k} \leq (0.52)^{(\frac{3}{2})^k}$, which leads to $\lambda_k \leq \frac{1}{3.22^2}(0.52)^{(\frac{3}{2})^k}$.

Given $\varepsilon \in (0, \frac{1}{20})$, to obtain $\lambda_k \leq \varepsilon$, we need to impose $(\frac{3}{2})^k \geq 1.7\ln(\frac{1}{\varepsilon})-3$.
Hence, we can choose $k := \left\lfloor \log_{\frac{3}{2}}(\ln(\frac{1}{\varepsilon}))\right\rfloor$ as the maximum number of iterations.
Moreover, to bound $\delta$ and $\delta_2$ in terms of $\varepsilon$, we use $(\frac{3}{2})^k \geq 1.7\ln(\frac{1}{\varepsilon})-3$ to show that these quantities can be bounded by 
$\delta := \frac{1}{4}(\frac{3}{5})^{\ln(\frac{1}{\varepsilon})}$, and $\delta_2 := \frac{37}{20}(\frac{11}{50})^{\ln(\frac{1}{\varepsilon})}$, respectively.

$\mathrm{(c)}$~
Similar to $\mathrm{(b)}$, we assume that we run the damped step scheme \eqref{eq:prox_nt_scheme} for $k$ iterations.
Then, after $k$ iterations, we have $\delta \leq \frac{\lambda_k}{10}$ and $\delta_2 \leq \frac{\lambda_k^2}{5}$.
Using these inequalities, and $\lambda_k \leq \frac{1}{20}$, we can numerically overestimate \eqref{eq:main_bound_th1} as 
\begin{equation*}
\lambda_{k+1} ~\leq \left(4.1 + \frac{12.5}{10} + \frac{2.1}{5}  \right)\lambda_k^2 \leq~ 5.77\lambda_k^2.
\end{equation*}
This is equivalent to  $c\lambda_{k+1} \leq (c\lambda_k)^2$, where $c := 5.77$.
By induction, we have $c\lambda_k \leq (c\lambda_0)^{2^k} \leq (0.2885)^{2^k}$, which leads to $\lambda_k \leq \frac{1}{5.77}(0.2885)^{2^k}$.

Given $\varepsilon \in (0, \frac{1}{20})$, to obtain $\lambda_k \leq \varepsilon$, we need to impose $2^k \geq 0.9\ln(\frac{1}{\varepsilon})-1.4$.
Hence, we can choose $k := \left\lfloor \log_2(\ln(\frac{1}{\varepsilon}))\right\rfloor$ as the maximum number of iterations.
Moreover, to bound $\delta$ and $\delta_2$ in terms of $\varepsilon$, we use $2^k \geq 0.9\ln(\frac{1}{\varepsilon})-1.4$ to show that these quantities can be bounded by 
$\delta := \frac{1}{10}(\frac{1}{3})^{\ln(\frac{1}{\varepsilon})}$ and $\delta_2 := \frac{1}{5}(\frac{3}{25})^{\ln(\frac{1}{\varepsilon})}$, respectively.

The last statement of this theorem follows from the same argument as in Theorem~\ref{th:global_convergence3} for the convergent sequence $\sets{z^k}$.
\Eproof
\end{proof}

As an example,  assume that we fix the target accuracy $\varepsilon = 10^{-4}$, then $\delta_2$ in Statement $\mathrm{(a)}$ becomes $\delta_2 \leq 1.56\times 10^{-5}$ and $k = 10$.
For  Statement $\mathrm{(b)}$ we have $\delta_3, \delta_4 \leq 2.3\times 10^{-3}$, $\delta_2 \leq 1.6\times 10^{-6}$, and $k = 6$.
For  Statement $\mathrm{(c)}$ we have $\delta_3, \delta_4 \leq 0.5\times 10^{-5}$, $\delta_2 \leq 10^{-9}$, and $k = 4$.
In Statement $\mathrm{(c)}$  the accuracy $\delta_2$ is too small due to the choice $\delta_2 \leq \frac{\lambda_k^2}{5}$.
Again, we can always choose $\delta_2 \leq c\lambda_k^2$ for some positive value $c$ to relax this accuracy.

\begin{remark}\label{re:subgrad_convergence}
The last statement of Theorems \ref{th:local_covergence1} and \ref{th:local_covergence2} shows the bound on the subgradient sequence $\set{\inf_{\nabla{F}(z^k)\in\partial{F}(z^k)}\tnorm{\nabla{F}(z^k)}_{x^k}^{\ast}}$ of the objective function $F$.
This sequence decreases with the same rate as of  $\set{\lambda_k + \sqrt{\delta_1}}$.
\Eproof
\end{remark}

\begin{remark}\label{re:choice}
Due to the complexity of \eqref{eq:key_local_estimate_com_ine}, we only provide one explicit range of $\delta_i$ for $i=0,\ldots,4$ and $\lambda_k$ by numerically computing their upper bounds. 
However, we can choose different values than the ones we provided in Theorems \ref{th:local_covergence1} and \ref{th:local_covergence2}.   
Moreover, we can also adapt the choice of $\delta_i$ for $i=0,\ldots,4$ over the iteration $k$ instead of fixing them at given values according to the desired accuracy.  
In particular, the statements of Theorems \ref{th:local_covergence1} and \ref{th:local_covergence2} will also be valid  for an adaptive inexact oracle framework as in Theorem \ref{th:adaptive_global_convergence} where its accuracy in the  early iterations  can be  rough and becomes finer in the last iterations. 
For example, by considering an adaptive   local inexact oracle with varying accuracies $\delta_2^k \leq \rho^{k}$ and  $\delta_3^k \leq c$,  for some appropriate  constants  $c, \rho \in (0, \,1)$, at each iteration $k\geq 0$, we can obtain  linear convergence rate, that is $\lambda_k \leq  \rho^k$. 
Similarly, by choosing appropriate adaptive values for  $ \delta_2^k$ and $ \delta_3^k$, we can get superlinear or even quadratic convergence rates.    
\Eproof
\end{remark}

\beforesubsec
\subsection{\bf Relationship to other inexact methods}\label{subsec:special_cases}
\aftersubsec
We show that our \ref{eq:prox_nt_scheme} covers both inexact Newton methods in \cite{li2017inexact,zhang2015disco} and quasi-Newton method in \cite{gao2016quasi}. 
For these special cases, where convergence bounds are known, our theory allows one to recover or to reproduce the best known rates.

\beforeparagraph
\paragraph{\bf (a)~Inexact proximal Newton methods:}
In \cite{li2017inexact} a proximal Newton method was proposed, where the inexactness lies on the subproblem of computing proximal Newton directions.
This method can be viewed as a special case of our method by choosing $\delta_0 = \delta_1 = \delta_2 = \delta_3 = 0$ (i.e., no inexact oracle was considered in \cite{li2017inexact}). 
In this case, the subproblem \eqref{eq:inexact_subp} reduces to the following one by using $\delta_4 = 1 - \theta_k$ with $\theta_k$ defined in \cite{li2017inexact}:
\begin{equation}\label{inexact_subp_1}
\nu^k \in \nabla{f}(x^k) + \nabla^2{f}(x^k)(z^k-x^k)+\partial R(z^k),
\end{equation}
where $\norms{\nu^k}_{x^k}^{\ast}\leq (1-\theta_k)\norms{z^k-x^k}_{x^k}$.
For the damped step proximal Newton method, the corresponding stepsize reduces to $\alpha_k=\frac{1-\delta_4^k}{1+(1-\delta_4^k)\lambda_k} =\frac{\theta_k}{1+\theta_k\lambda_k}$, which is the same as the stepsize defined in \cite{li2017inexact}.
For the global convergence, \cite[Theorem 3]{li2017inexact} is a special case of our  Theorem \ref{th:global_convergence3} with exact Hessian, gradient, and function values. 
Furthermore, if we let $\alpha_k=1$ in Lemma \ref{le:key_est_for_prox_nt_scheme_ine}, then we get the same local convergence result as shown in \cite[Theorem 2]{li2017inexact}.

\beforeparagraph
\paragraph{\bf (b)~Quasi-Newton methods:}
In \cite{gao2016quasi}, a quasi-Newton method for self-concordant minimization is proposed  based on a curvature-adaptive stepsize that involves both inexact and the true Hessian.
Interestingly, we can reproduce the algorithms in \cite{gao2016quasi} using the same Lipschitz gradient and strong convexity assumptions.
We can classify the steps of this quasi-Newton method into our framework and routinely reproduce the same convergence results as in \cite{gao2016quasi}.

To avoid any notation ambiguity, we express related quantities in \cite{gao2016quasi} with a superscript ``$^G$'' (e.g.,  $\alpha_k^G$ means $t_k$ in \cite[Line 5 of Algorithm 1]{gao2016quasi}) and let $B_k^{\textrm{inv}}$ be the inverse inexact Hessian $B_k$ in \cite{gao2016quasi}.
Since $f$ is self-concordant, by using  $(\tilde{f}, g, H) = (f, \nabla f, \nabla^2 f)$, we obtain a $(0,0)$-global inexact oracle as in Definition \ref{de:global_inexact_oracle}. Since \cite{gao2016quasi} only deals with the non-composite form, we have $R(x)\equiv 0$ in our setting.
Therefore, our inexact proximal Newton scheme \eqref{eq:prox_nt_scheme} reduces to the following inexact Newton scheme with exact oracle:
\begin{equation}\label{eq:nt_scheme}
\left\{\begin{array}{ll}
z^k &:\approx \displaystyle x^k - \nabla^2f(x^k)^{-1}\nabla f(x^k) \vspace{1.5ex}\\
x^{k+1} &:= (1-\alpha_k)x^k + \alpha_kz^k = x^k + \alpha_kd^k,~~\text{where}~~d^k:= z^k - x^k.
\end{array}\right.\tag{\textrm{iNA}}
\end{equation}
To rewrite the quasi-Newton method in \cite{gao2016quasi} into \eqref{eq:nt_scheme}, we exactly compute $z^k := x^k - B_k^{\textrm{inv}}\nabla f(x^k)$.
In this case, $d^k = z^k - x^k = -B_k^{\textrm{inv}}\nabla f(x^k)$ is exactly the descent direction $d_k^G$ in  \cite{gao2016quasi}. 
Therefore, by choosing $\nu^k := \nabla f(x^k)+\nabla^2f(x^k)d^k$ in \eqref{eq:nt_scheme}, we can rewrite the quasi-Newton method in \cite{gao2016quasi} into \eqref{eq:nt_scheme}.
That is $\tnorm{\nabla f(x^k)+\nabla^2f(x^k)d^k}_{x^k}^{*} \leq \delta_4^k\tnorm{d^k}_{x^k}$.

Moreover, when $d^k$ is a descent direction, we have $\iprods{\nabla f(x^k),d^k} < 0$.
Hence, it implies that
\begin{equation*}
\delta_4^k := 1 - \alpha_k^G = 1 - \frac{\iprods{\nabla f(x^k),-d^k}}{\norms{d^k}_{x^k}^2} = 1 + \frac{\iprods{\nabla f(x^k),d^k}}{\norms{d^k}_{x^k}^2} = 1 + \frac{\iprods{\nabla f(x^k),d^k}}{\lambda_k^2} < 1.
\end{equation*}
The condition \eqref{eq:inexact_subp} becomes $\tnorm{\nu^k}^{*}_{x^k} = \tnorm{\nabla f(x^k)+\nabla^2f(x^k)d^k}_{x^k}^{*} \leq \left(\lambda_k + \frac{\iprods{\nabla f(x^k),d^k}}{\lambda_k}\right) = \delta_4^k\lambda_k$.

Now, using \eqref{eq:global_inexact_oracle} with $y := x^{k+1}$, $x := x^k$, and $\delta_1 = \delta_0 = 0$, we can show that
\begin{equation}\label{nt:desc_proof}
f(x^{k+1})  \leq f(x^k) + \alpha_k\iprods{\nabla f(x^k),d^k} + \omega_{\ast}(\alpha_k\lambda_k).
\end{equation}
Minimizing the right-hand side of this inequality over $\alpha_k\in [0,1]$, we obtain the optimal $\alpha_k$ as:
\begin{equation*}
\alpha_k = \frac{-\iprods{\nabla f(x^k),d^k}}{\lambda_k(\lambda_k-\iprods{\nabla f(x^k),d^k})} 
= \frac{-\iprods{\nabla f(x^k),d^k}/\lambda_k^2}{\lambda_k(\lambda_k-\iprods{\nabla f(x^k),d^k})/\lambda_k^2} 
= \frac{1-\delta_4^k}{1+(1-\delta_4^k)\lambda_k}.
\end{equation*}
Substituting this $\alpha_k$ into \eqref{nt:desc_proof} we obtain
\begin{equation}\label{nt:desc}
f(x^{k+1}) \leq f(x^k) - \omega((1-\delta_4^k)\lambda_k).
\end{equation}
Using this estimate and the above stepsize $\alpha_k$, the conclusion of Theorem \ref{th:global_convergence3} still holds.

To see the similarity between \cite{gao2016quasi} and \eqref{eq:nt_scheme}, we rearrange  our stepsize $\alpha_k$ above to get
\begin{equation*}
\alpha_k = \frac{1-\delta_4^k}{1+(1-\delta_4^k)\lambda_k} = \frac{\alpha_k^G}{1+\alpha_k^G\delta_k^G} = t_k^G,
\end{equation*}
which is exactly the stepsize $t_k^G$ used in \cite{gao2016quasi}.
For the descent property, the conclusion in \cite[Lemma 4.1]{gao2016quasi} is $f(x^{k+1}) \leq f(x^k) - \omega(\eta_k^G)$.
Comparing this with \eqref{nt:desc}, we have
\begin{equation*}
(1-\delta_4^k)\lambda_k = \frac{\iprods{\nabla f(x^k),-d^k}}{\norms{d^k}_{x^k}^2}\cdot\norms{d^k}_{x^k} = \frac{\iprods{\nabla f(x^k),B_k^{-1}\nabla f(x^k)}}{\lambda_k} = \frac{\rho_k^G}{\delta_k^G}=\eta_k^G.
\end{equation*}
\noindent 
Therefore, we have reproduced the main result of \cite[Section 4]{gao2016quasi} by using our framework and Theorem \ref{th:global_convergence3}.
Furthermore, \cite[Section 5]{gao2016quasi} analyzes the convergence behavior of the quasi-Newton method but relying on the condition $\underline{\lambda} \Id \preceq B_k^{\textrm{inv}} \preceq \bar{\lambda}\Id$ for either $\underline{\lambda} = \bar{\lambda} = 1$ (gradient descent) or for $\underline{\lambda}$ and $\bar{\lambda}$ being chosen as in \cite[Theorem 5.5]{gao2016quasi} (L-BFGS).
We emphasize that \cite[Section 6]{gao2016quasi} derives similar results for $B_k^{\textrm{inv}}$ based on BFGS updates.
Since  \cite[Sections 5 and 6]{gao2016quasi} are just two particular choices for $B_k^{\textrm{inv}}$ based on the scheme of \cite[Section 4]{gao2016quasi}, from our previous discussion, it follows immediately that we can reproduce   the local and global convergence results in \cite{gao2016quasi} under the Lipschitz gradient and strong convexity assumptions as considered in \cite{gao2016quasi}.

\beforesec
\section{Application to Primal-Dual Methods}\label{sec:primal_dual_method}
\aftersec
We have shown in Subsection \ref{subsec:Fen_conj} that inexact oracles of a convex function can be controlled by approximately evaluating its Fenchel conjugate.
In this section, we show how to apply this theory to design an inexact primal-dual method for solving composite minimization problem of a self-concordant objective term and a nonsmooth convex regularizer.

We consider the following composite convex problem:
\begin{equation}\label{eq:prob_main}
G^{\star}:=\min_{y\in\R^n}\Big\{ G(y) := \varphi(A^{\top}y) + \psi(y) \Big\},
\end{equation}
where $\varphi : \R^p \to \R\cup\set{+\infty}$ is proper, closed, and convex, and $\psi : \R^n\to\R\cup\set{+\infty}$ is a smooth convex function. 
We assume that $\psi$ is self-concordant, $\varphi$ is proximally tractable, and  $A\in\R^{n\times p}$ is not diagonal.
Problem \eqref{eq:prob_main} covers many applications in the literature such as image denoising and restoration \cite{Chambolle2011}, sparse inverse covariance estimation \cite{Friedman2008}, distance weighted discrimination \cite{marron2007distance}, robust PCA, and fused lasso problems. 

Since $\varphi$ is nonsmooth, and $A$ is not diagonal, the proximal operator of $\varphi(A^{\top}(\cdot))$ is not proximally tractable in general.
We instead consider the dual problem of \eqref{eq:prob_main}.
Using Fenchel conjugate, the dual problem of \eqref{eq:prob_main} can be written as
\begin{equation}\label{eq:prob_dual_ori2}
F^{\star} := \min_{x\in\R^p}\Big\{ F(x) := f(x) + R(x) \equiv \psi^{\ast}(Ax) + \varphi^{\ast}(-x) \Big\},
\end{equation}
which is exactly of the form \eqref{eq:c_sc_min}, where $f(x) := \psi^{\ast}(Ax)$ and $R(x) := \varphi^{\ast}(-x)$.
For our primal-dual framework to be well-defined, we impose the following assumption:
\begin{assumption}\label{as:A4}
The primal and dual solution sets of \eqref{eq:prob_main} and \eqref{eq:prob_dual_ori2} are nonempty.
Moreover, the strong duality between \eqref{eq:prob_main} and \eqref{eq:prob_dual_ori2} holds, i.e., $G^{\star} + F^{\star} = 0$.
\end{assumption}
Assumption~\ref{as:A4} is standard in primal-dual methods for convex optimization and can be stated in different ways, see, e.g.,  \cite{Chambolle2011,Necoara2009,TranDinh2016c}.
The optimality condition of \eqref{eq:prob_main} and \eqref{eq:prob_dual_ori2} becomes
\begin{equation}\label{eq:opt_cond3}
A x^{\star} = \nabla{\psi}(y^{\star})~~\text{and}~~ -x^{\star} \in \partial{\varphi}(A^{\top}y^{\star}) ~\Leftrightarrow~ 0 \in  -A^{\top}y^{\star} + \partial{\varphi}^{\ast}(-x^{\star}).
\end{equation}
Let $y^{\ast}(x) \in \displaystyle\mathrm{arg}{\!\!\!\!\!\!\!}\max_{y\in\dom{\psi}}\set{\iprods{x, A^{\top}y} - \psi(y)}$. 
Since the optimal set of \eqref{eq:prob_main} is nonempty and $\psi$ is self-concordant, $y^{\ast}(x)$ exists and is unique under mild conditions \cite[Theorem 4.1.11]{Nesterov2004}.

Moreover, we can show that the exact gradient and Hessian mappings of $f$ are $\nabla{f}(x) = A^{\top}y^{\ast}(x)$ and $\nabla^2{f}(x) = A^{\top}\nabla^2{\psi}(y^{\ast}(x))^{-1}A$, respectively.
However, in practice, we can only evaluate an inexact oracle of $f$ as
\begin{equation}\label{g,H:pd}
g(x) := A^{\top}\tilde{y}^{\ast}(x),~~~\text{and}~~~H(x) := A^{\top}\nabla^2{\psi}(\tilde{y}^{\ast}(x))^{-1}A,
\end{equation}
that approximate $\nabla{f}(x)$ and $\nabla^2{f}(x)$, respectively, 
where $\tilde{y}^{\ast}(x)$ is an approximate solution of $y^{\ast}(x)$ such that $\norms{Ax - \nabla{\psi}(\tilde{y}^{\ast}(x))}_{\tilde{y}^{\ast}(x)} \leq  \frac{\delta}{1 + \delta}$ as suggested by Lemma \ref{le:est_A}.
Here, the local dual norm $\norms{\cdot}_{y}^{\ast}$ is defined as $\norms{u}_{y}^{\ast} := \big(u^{\top}\nabla^2{\psi}(y)^{-1}u\big)^{1/2}$ based on the self-concordant function $\psi$.

Now, we can develop an inexact primal-dual method to solve \eqref{eq:prob_main} as follows.
Starting from an initial point $x^0\in\dom{F}$, at each iteration $k\geq 0$, we perform the following steps:
\begin{enumerate}
\item Approximately compute $\tilde{y}^{\ast}(x^k)$ such that $\norms{Ax^k - \nabla{\psi}(\tilde{y}^{\ast}(x^k))}_{\tilde{y}^{\ast}(x^k)}^{\ast} \leq \frac{\delta}{1 + \delta}$,  where $\delta$ is chosen according to Lemma \ref{le:est_A} and Theorem \ref{th:global_convergence3}.
\item Form an inexact oracle $g(x^k) := A^{\top}\tilde{y}^{\ast}(x^k)$ and $H(x^k) := A^{\top}\nabla^2{\psi}(\tilde{y}^{\ast}(x^k))^{-1}A$ of $f$ at $x^k$.
\item Approximately solve $z^k\approx \bar{z}^k:=\arg\min\big\{ \mathcal{Q}(x;x^k) + R(x) \big\}$ as in \eqref{eq:prox_nt_scheme}.
\item Compute a stepsize $\alpha_k$ as in \eqref{eq:step_size2}.
\item Update $x^{k+1} := (1 -  \alpha_k)x^k +  \alpha_kz^k$.
\end{enumerate}
Finally, we recover an approximate solution $y^k := \tilde{y}^{\ast}(x^k)$ of $y^{\star}$ for \eqref{eq:prob_main}.

The following theorem shows that $y^k$ is indeed an approximate solution of \eqref{eq:prob_main}.

\begin{theorem}\label{th:approx_sol}
Let $\sets{(z^k, y^k)}$ be the sequence generated by our primal-dual scheme above.
Then
\begin{equation}\label{eq:primal_dual_aprox_sol}
\Vert Az^k - \nabla{\psi}(y^k)\Vert_{y^k}^{\ast} \leq \frac{\delta}{1 + \delta} + \lambda_k~~~\text{and}~~r^k \in A^{\top}y^k  -\partial \varphi^{\ast}(-z^k) ~\text{with}~\tnorm{r^k}^{\ast}_{x^k} \leq (1+\delta_4)\lambda_k.
\end{equation}
Consequently, if we compute $\lambda_k$ and choose $\delta$ such that $\delta   \leq \frac{\varepsilon}{2 - \varepsilon}$ and $\lambda_k \leq \frac{\varepsilon}{2}$, then
$(z^k, y^k)$ is an $\varepsilon$-solution of the primal problem \eqref{eq:prob_main} and its dual \eqref{eq:prob_dual_ori2}, i.e., $\Vert Az^k - \nabla{\psi}(y^k)\Vert^{\ast}_{y^k} \leq \varepsilon$ and $\tnorm{r^k}^{\ast}_{x^k} \leq \varepsilon$ such that $r^k \in  A^{\top}y^k  -\partial \varphi^{\ast}(-z^k)$, which approximate \eqref{eq:opt_cond3}.
\end{theorem}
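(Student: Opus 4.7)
The plan is to establish the two estimates in \eqref{eq:primal_dual_aprox_sol} by direct manipulation, using the definitions of the inexact oracle \eqref{g,H:pd}, the stopping criterion \eqref{eq:inexact_subp}, and the identity $R(x) = \varphi^{\ast}(-x)$. The key observation that makes things fit is that $H(x^k) = A^{\top}\nabla^2\psi(y^k)^{-1}A$ with $y^k = \tilde{y}^{\ast}(x^k)$, so norms measured in $\tnorm{\cdot}_{x^k}$ are exactly the $\norm{A\cdot}_{y^k}^{\ast}$ norms in the primal space.

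First, I would prove the primal feasibility bound. Decompose
\begin{equation*}
Az^k - \nabla\psi(y^k) = A(z^k - x^k) + \bigl(Ax^k - \nabla\psi(y^k)\bigr)
\end{equation*}
and apply the triangle inequality for $\norm{\cdot}_{y^k}^{\ast}$. By construction of $\tilde{y}^{\ast}(x^k)$ (the inner stopping rule required in step 1 of the algorithm), the second term is bounded by $\tfrac{\delta}{1+\delta}$. For the first term, using $H(x^k) = A^{\top}\nabla^2\psi(y^k)^{-1}A$ directly gives
\begin{equation*}
\norm{A(z^k - x^k)}_{y^k}^{\ast} = \bigl\langle H(x^k)(z^k-x^k),\, z^k - x^k\bigr\rangle^{1/2} = \tnorm{z^k - x^k}_{x^k} = \lambda_k,
\end{equation*}
which yields the first estimate.

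Next, for the approximate dual feasibility, I would unpack the inexact subproblem criterion \eqref{eq:inexact_subp}. Since $R(x) = \varphi^{\ast}(-x)$, the chain rule gives $\partial R(z^k) = -\partial\varphi^{\ast}(-z^k)$, and using $g(x^k) = A^{\top}y^k$, the inclusion in \eqref{eq:inexact_subp} rearranges to
\begin{equation*}
r^k := \nu^k - H(x^k)(z^k - x^k) \in A^{\top}y^k - \partial\varphi^{\ast}(-z^k).
\end{equation*}
A triangle inequality in $\tnorm{\cdot}_{x^k}^{\ast}$ combined with $\tnorm{\nu^k}_{x^k}^{\ast} \leq \delta_4\lambda_k$ (from \eqref{eq:inexact_subp}) and the identity $\tnorm{H(x^k)(z^k - x^k)}_{x^k}^{\ast} = \tnorm{z^k - x^k}_{x^k} = \lambda_k$ then delivers $\tnorm{r^k}_{x^k}^{\ast} \leq (1+\delta_4)\lambda_k$.

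Finally, the $\varepsilon$-solution claim is a small arithmetic check: the map $\delta \mapsto \delta/(1+\delta)$ is increasing, so $\delta \leq \varepsilon/(2-\varepsilon)$ implies $\delta/(1+\delta) \leq \varepsilon/2$; combined with $\lambda_k \leq \varepsilon/2$ this makes the first bound at most $\varepsilon$, and (since $\delta_4 < 1$) the second at most $2\lambda_k \leq \varepsilon$. I do not anticipate a major obstacle here since everything follows by definition-chasing once the identification $H(x^k) = A^{\top}\nabla^2\psi(y^k)^{-1}A$ is exploited; the only subtle point is making sure that the inexact oracle condition from Lemma~\ref{le:est_A} is invoked in the correct form (with $\tilde y^{\ast}(x^k)$ replacing $u^{\ast}(x^k)$), and that the sign in $\partial R(z^k) = -\partial\varphi^{\ast}(-z^k)$ is handled carefully so that $r^k$ ends up on the correct side of the inclusion.
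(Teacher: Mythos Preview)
Your proposal is correct and follows essentially the same approach as the paper: both proofs hinge on the identity $H(x^k) = A^{\top}\nabla^2\psi(y^k)^{-1}A$ to translate between $\tnorm{\cdot}_{x^k}$ and $\norm{A\cdot}_{y^k}^{\ast}$, define $r^k := \nu^k - H(x^k)(z^k-x^k)$ from the subproblem inclusion \eqref{eq:inexact_subp}, and finish with the same triangle inequalities and arithmetic. The only difference is the order of presentation (you do the primal bound first, the paper does the dual bound first), which is immaterial.
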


\begin{proof}
Since we define $y^k:=\tilde{y}^{\ast}(x^k)$, from \eqref{eq:prox_nt_scheme}, \eqref{eq:inexact_subp}, and \eqref{g,H:pd}, we have 
\begin{equation*}
\nu^k\in A^{\top}y^k + A^{\top}\nabla^2{\psi}(y^k)^{-1}A(z^k-x^k)-\partial \varphi^{\ast}(-z^k).
\end{equation*} 
Let us define $r^k := \nu^k - A^{\top}\nabla^2{\psi}(y^k)^{-1}A(z^k-x^k)$.
Then, the last condition  leads to $r^k  \in A^{\top}y^k  -\partial \varphi^{\ast}(-z^k)$.
Hence, we can estimate $\tnorm{r^k}_{x^k}$ as follows:
\begin{equation*}
\tnorm{r^k}_{x^k}^{\ast} \overset{\tiny{(i)}}{\leq} \tnorm{\nu^k}_{x^k}^{\ast} + \tnorm{A^{\top}\nabla^2{\psi}(y^k)^{-1}A(z^k-x^k)}_{x^k}^{\ast}  \overset{\tiny{(ii)}}{\leq} \delta_4^k\lambda_k + \lambda_k = (1 + \delta_4^k)\lambda_k.
\end{equation*}
Here, we use the triangle inequality in (i), and $H(x^k) = A^{\top}\nabla^2{\psi}(y^k)^{-1}A$, $d^k := z^k - x^k$, and $\lambda_k$ defined by \eqref{eq:step_size2} into (ii).
Therefore, we get the second part of \eqref{eq:primal_dual_aprox_sol}.

We also have $\big[\Vert A(z^k - x^k)\Vert_{y^k}^{*}\big]^2 = (z^k-x^k)^{\top}A\nabla^2{\psi}(y^k)^{-1}A(z^k - x^k)  = (z^k - x^k)^{\top}H(x^k)(z^k - x^k)  = \tnorm{z^k - x^k}_{x^k}^2$, which implies $\Vert A(z^k - x^k)\Vert_{y^k}^{*} = \tnorm{z^k - x^k}_{x^k}$.
Using this relation and $\norms{Ax^k - \nabla{\psi}(y^k)}_{y^k}^{\ast} \leq \frac{\delta_k}{1 + \delta_k}$ we can show that $\norms{Az^k - \nabla{\psi}(y^k)}_{y^k}^{\ast} \leq \frac{\delta_k}{1 + \delta_k} + \norms{A(z^k - x^k)}_{y^k}^{\ast} = \frac{\delta_k}{1 + \delta_k} + \tnorm{z^k - x^k}_{x^k} = \frac{\delta_k}{1 + \delta_k} + \lambda_k$, which proves the first part of \eqref{eq:primal_dual_aprox_sol}.

Finally, since $\delta_4 \in [0,1]$, if we impose $\lambda_k \leq \frac{\varepsilon}{2}$, then we have $\tnorm{r^k}^{\ast}_{x^k} \leq 2\lambda_k \leq \varepsilon$.
Moreover, if $\delta   \leq \frac{\varepsilon}{2 - \varepsilon}$, then since $\lambda_k \leq \frac{\varepsilon}{2}$, from \eqref{eq:primal_dual_aprox_sol} we have $\Vert Az^k - \nabla{\psi}(y^k)\Vert_{y^k}^{\ast} \leq \frac{\delta}{1+\delta} + \frac{\varepsilon}{2} \leq \varepsilon$.
\Eproof
\end{proof}

By Theorems \ref{th:local_covergence1} and \ref{th:local_covergence2}, we can also prove locally linear, superlinear, and quadratic convergence rates of the two residual sequences $\big\{\Vert Az^k - \nabla{\psi}(y^k)\Vert^{\ast}_{y^k}\big\}$ and $\big\{\tnorm{r^k}^{\ast}_{x^k}\big\}$. 
However, we skip the details to avoid overloading the paper.

\beforesec
\section{Preliminary Numerical Experiments}\label{sec:Numerical}
\aftersec
We provide two numerical examples to verify several aspects of our theoretical results and also compare our algorithms with some state-of-the-art methods. 
We have implemented the proposed algorithms and competing methods (if needed) in Matlab 2018a running on a Lenovo Thinkpad 2.60GHz Intel Core i7 Laptop with 8Gb memory.

\beforesec
\subsection{\bf Composite convex optimization models involving log-barriers}
\aftersec
This example aims at illustrating several theoretical aspects of our theory developed in the previous sections. For this purpose, we consider the following composite convex minimization model involving a log-barrier as a special case of \eqref{eq:prob_main}:
\begin{equation}\label{ex_2:l1tv}
G^{\star} := \min_{y\in\R^p}\Big\{ G(y) := \varphi(A^{\top}y) + \psi(y) \Big\},
\end{equation}
where $\varphi : \R^n\to\Rext$ is a proper, closed, and convex function, $\psi(y) := -\sum_{i=1}^m w_i \ln(d_i - c_i^{\top}y)$, which can be viewed as a barrier function of a polyhedron $\Pc := \big\{ y \in \R^p \mid C^{\top}y \leq d\big\}$, $A\in\R^{p\times n}$, and $w\in\R^m_{+}$ is a given weight vector.
We emphasize that using log-barriers allows us to handle ``hard" constraints (i.e., the constraints that can not be violated).

In our experiments, we focus on the case $\varphi$ is a finite sum of $\ell_p$-norms. Problem \eqref{ex_2:l1tv} has concrete applications, including solving systems of linear equations and inequations, Poisson image processing \cite{Harmany2012,Lefkimmiatis2013}, and robust optimization \cite{Ben-Tal2009}.

Unlike several existing models, the linear operator $A$ in \eqref{ex_2:l1tv} is composited into a nonsmooth term $\varphi$ making proximal gradient-type methods to be inefficient due to the expensive proximal operator of $\varphi(A^{\top}(\cdot))$.
Instead of solving the primal problem \eqref{ex_2:l1tv} directly, we consider its dual formulation as in Section \ref{sec:primal_dual_method}:
\begin{equation}\label{ex_2:l1tvd}
F^{\star} := \min_{x}\Big\{ F(x) := \varphi^{\ast}(-x) + \psi^{\ast}(Ax)\Big\},
\end{equation}
where $\varphi^{\ast}$ and $\psi^{\ast}$ are the Fenchel conjugates of $\varphi$ and $\psi$, respectively.
Clearly, since $\psi$ is smooth, one can evaluate its conjugate $\psi^{\ast}$ as well as the derivatives of $\psi^{\ast}$ by solving
\begin{equation}\label{ex_2:fconj}
\psi^{\ast}(Ax) := \max_{u \in\R^n } \Big\{ h(u) :=  \iprod{Ax, u} + \sum_{i=1}^mw_i\ln(d_i - c_i^{\top}u) \Big\}.
\end{equation}
Let us denote by $u^{\ast}(x)$ the solution of this problem.
Since the underlying function is self-concordant, one can apply a Newton method to compute $u^{\ast}(x)$ \cite{Nesterov2004}.
However, we can only approximately compute $u^{\ast}(x)$, which leads to inexact oracle of $\psi^{\ast}$.
Hence, our theory, in particular the results developed in Section \ref{sec:primal_dual_method}, can be applied to solve \eqref{ex_2:fconj}.

\beforesubsubsec
\subsubsection{\textbf{Application to network allocation problems}}\label{eg:Network}
\aftersubsubsec
We first apply the composite setting \eqref{ex_2:l1tv} to model the following network allocation problem. 

Assume that we have $K$ cities described by polytopes as their possible area:
\begin{equation*}
\Pc_{[i]} := \set{ y \in\R^p \mid C^{[i]}y \leq d^{[i]}}~~~\text{for}~~i=1,\cdots, K.
\end{equation*}
These cities are connected by a delivery network describing the routes between each pair of cities.
Our goal is to locate a delivery center $y^{[i]} \in \Pc_{[i]}$ such that the total distance (or the total delivery costs) between these cities is minimized. 

In order to accurately guarantee $y^{[i]} \in \Pc_{[i]}$, we use a log-barrier function to handle this hard constraint.
Therefore, one way to model this problem is as in \eqref{ex_2:l1tv}, where
\begin{equation*}
\varphi(Ay) := \mu\sum_{(i,j) \in \Ec}c_{ij}\norms{y^{[i]} - y^{[j]}}_2=\mu\sum_{(i,j)\in\Ec} c_{ij}\sqrt{(y^{[i]}_1 - y^{[j]}_1)^2 + (y^{[i]}_2 - y^{[j]}_2)^2},
\end{equation*}
where $c_{ij} \geq 0$ is the cost that is proportional to the distance between the $i$-th and the $j$-th city, and $\Ec$ is the set of edges of the graph describing this network, $\mu > 0$ is a penalty parameter in the barrier formulation \eqref{ex_2:l1tv}, and $A$ is a matrix describing the difference operator. 

\beforeparagraph
\paragraph{\textbf{$(a)$~Geometric illustrations}:}
We illustrate our algorithm for solving this model by creating a UNC shape as a toy example and finding optimal site allocation solution of a US network.
Here, the network shape is downloaded from \href{http://esciencecommons.blogspot.com/2015/06/how-flu-viruses-use-transportation.html}{http://esciencecommons.blogspot.com/2015/06/how-flu-viruses-use-transportation.html}.

Let us focus on minimizing the total distance between cities in this experiment.
Hence, the cost $c_{ij} = 1$ for all $(i,j) \in\Ec$.
We implement our \ref{eq:prox_nt_scheme} to solve \eqref{ex_2:l1tvd} using these two network configurations to generate input data.
The results are visualized in Figure~\ref{fig:UNC_and_STOR}.
\begin{figure}[htp!]
\begin{center}
\includegraphics[width=0.9\linewidth]{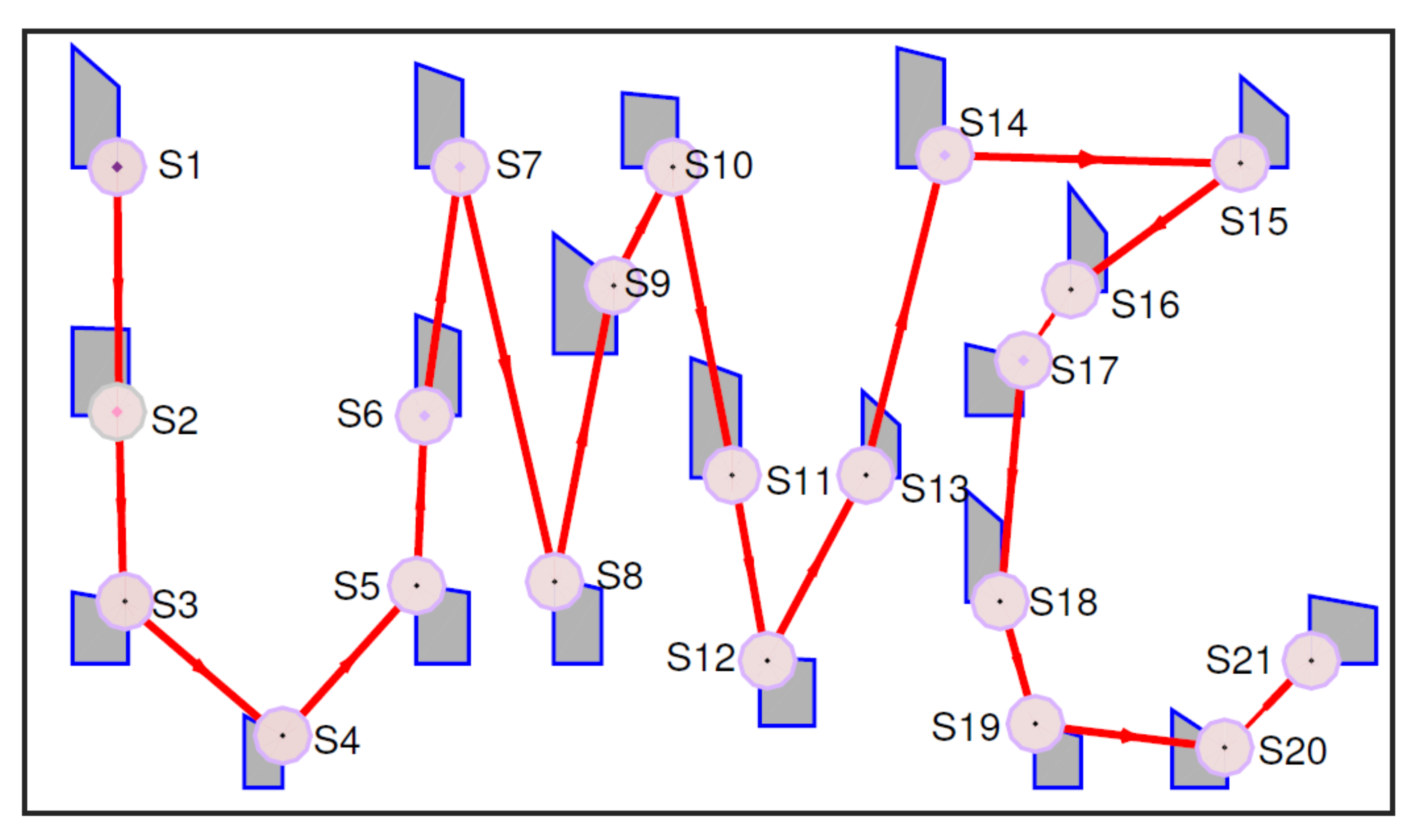}
\includegraphics[width=0.9\linewidth]{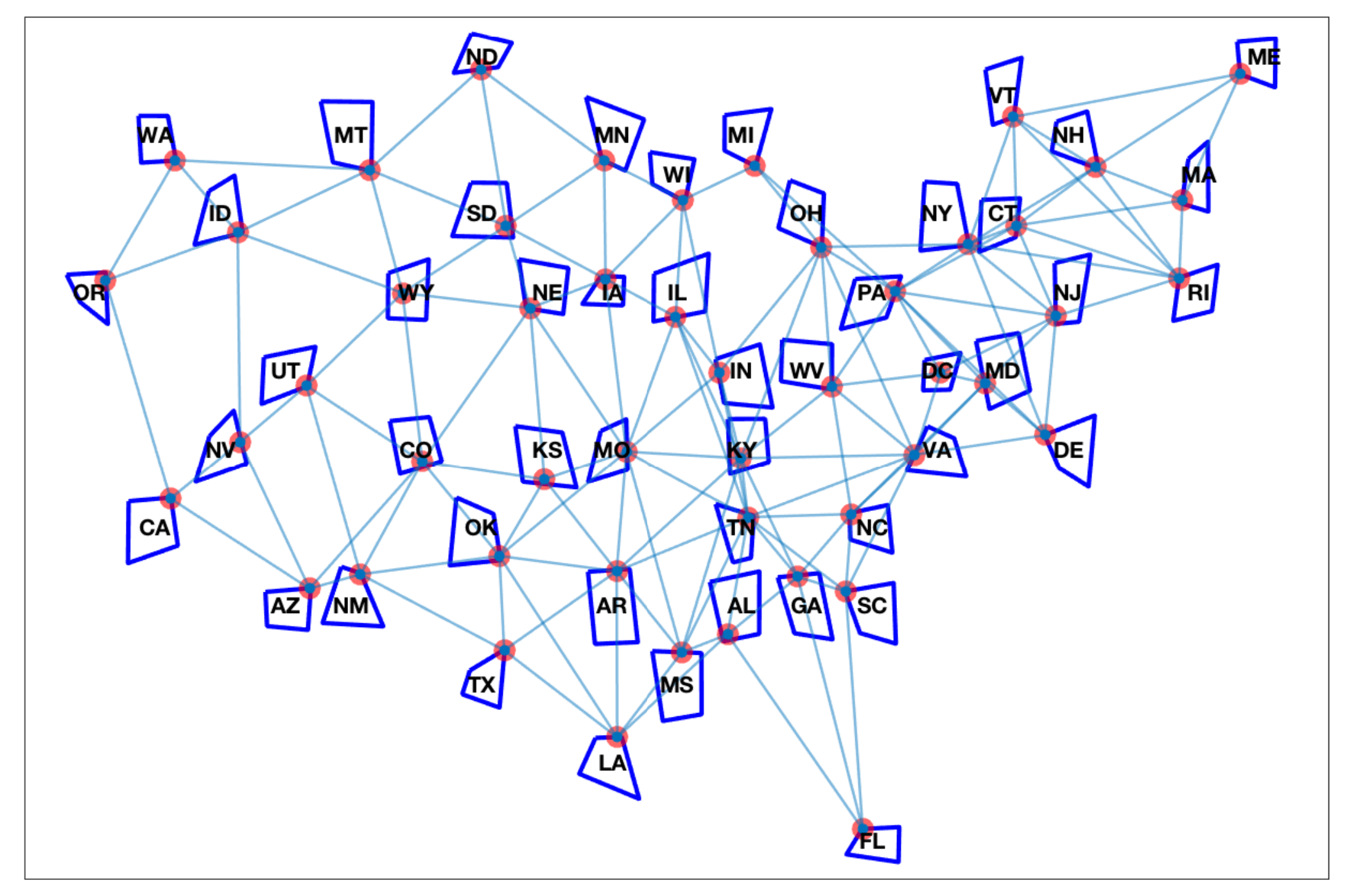} 
\vspace{-2ex}
\caption{Optimal site allocation for routes using a UNC shape (top) and  a US network (bottom).}\label{fig:UNC_and_STOR}
\vspace{-7ex}
\end{center}
\end{figure}
It is clear from this figure that our method automatically selects the delivery locations to minimize the total of distances between cities.

\beforeparagraph
\paragraph{\textbf{$(b)$~Comparison to other methods:}}
Next, we test our method on a collection of problems generated synthetically. 
The data is generated based on the geometric illustration above and is described as follows.

We simulate the data by generating $17$ problems with sparse network (the density $\rho=0.04$) and $13$ problems with dense network (the density $\rho=0.15$). 
For problems of size $2p$, we generate an $l$-by-$n$ rectangle area with $l=10$ and $n = \lfloor p/5\rfloor$ in our case with each area of size $10\times 10$ units. 
We randomly select $p$ positions from the $2p$ square. 
For each chosen position $i$ with the center point being the origin, we again randomly generates one point as a vertex in each quadrant of the square, 
and then link them together as the feasible region of site $i$, where $i=1,2,\cdots,p$, and the matrix $C$ and vector $d$ are generated from all feasible regions. 
We also generate a random adjacency matrix of size $p\times p$ with density $\rho=0.04$ (sparse) and $0.15$ (dense) as the network, which corresponds to the linear operator $A$ in the model setting \eqref{ex_2:fconj}. 

In our test, we choose $\mu=10$, which is appropriate to guarantee that the optimal points are sufficiently close to the boundary of the feasible regions. 
We again choose $c_{ij} = 1$ for all $(i,j) \in \Ec$. 
One can also use different $c_{ij}$ in order to reflect different practical situations.

We solve this problem using \ref{eq:prox_nt_scheme} as before. 
Since the problem shares a sparse structure of matrix $A$, we set the tolerance of the main loop to be $\texttt{tol}_{gap}:=10^{-10}$ and $\texttt{tol}_{sol}:=10^{-8}$, which measure the relative primal-dual gap defined by $r_{gap}:=\frac{\abs{F(x^k) +G(y^k)}}{1+\abs{F(x^k)}+\abs{G(y^k)}}$ and the maximum relative solution difference of the primal and dual solutions defined by
\begin{equation*}
r_{sol}:=\max\left\{\frac{\norms{x^{k+1}-x^k}_2}{\max\set{1,\norms{x^k}_2}},\frac{\norms{y^{k+1}-y^k}_2}{\max\set{1,\norms{y^k}_2}}\right\}.
\end{equation*}
We terminate our algorithm when both (1)~$r_{gap} \leq \texttt{tol}_{gap}$ and (2) $r_{sol} \leq \texttt{tol}_{sol}$ hold.

In this test, we show the advantages of our \ref{eq:prox_nt_scheme} to  state-of-the-art solvers such as SDPT3: a well-established interior-point solver to solve \eqref{ex_2:l1tv} \cite{Toh2010}, 
ADMM: the alternating direction method of multipliers \cite{Boyd2011}, and CP: Chambolle-Pock's primal-dual first-order algorithm \cite{Chambolle2011}.
We terminate all methods when both $\texttt{tol}_{gap}$ and $\texttt{tol}_{sol}$ are met.

For the first-order methods ADMM and CP, we lower $\texttt{tol}_{sol}$ to $10^{-6}$ instead of $10^{-8}$ in our implementation since these methods rarely achieve such a high accuracy. 
We run CP for $10,000$ iterations to get a solution with a very high accuracy as the ground truth, and compare the relative difference from the primal solution of all algorithms to the ground truth, which is denoted by $\texttt{qsol}$. 
Since there is no convergence rate guarantee at the first phase of \ref{eq:prox_nt_scheme} algorithm, we use ``$t_0/t$'' to represent the ratio between the number of iterations starting from $x^0$ until reaching the local quadratic convergence region (measured by $\lambda_k \leq 0.1$, where we start to apply Theorem \ref{th:local_covergence1}, \ref{th:local_covergence2}) and the total number of iterations. 
The results are listed in Table \ref{tbl:ex3}.

\begin{scriptsize}
\vspace{-4ex}
\begin{table}[hpt!]
\newcommand{\cell}[1]{{\!\!}#1{\!\!}}
\newcommand{\cellf}[1]{{\!\!\!}#1{\!\!}}
\newcommand{\cellbf}[1]{{\!\!}{#1}{\!\!\!}}
\begin{center}
\caption{The performance of $4$ algorithms for $\ell_{1,2}$-log barrier over $30$ problem instances.}\label{tbl:ex3}
\vspace{-1ex}
\begin{tabular}{ l | rrr | rrr | rrr | rrr}
\toprule
\multicolumn{1}{c|}{\cell{Problem}} & \multicolumn{3}{c|}{\cell{\ref{eq:prox_nt_scheme}}} & \multicolumn{3}{c|}{\cell{SDPT3}} & \multicolumn{3}{c|}{\cell{ADMM}} & \multicolumn{3}{c}{\cell{Chambolle-Pock}}\\  \midrule
\cell{Name} & \cell{$t_0/t$} & \cell{\!\texttt{t[s]}\!} &\cell{\texttt{qsol}} & \cell{\!\texttt{iter}\!} & \cell{\!\texttt{t[s]}} & \cell{\texttt{qsol}} & \cell{\!\texttt{iter}\!}  & \cell{\!\texttt{t[s]}} & \cell{\texttt{qsol}}  & \cell{\!\texttt{iter}\!} & \cell{\!\texttt{t[s]}} & \cell{\texttt{qsol}}\\
\midrule
\multicolumn{13}{c}{Sparse networks} \\ \midrule
\cell{p004120} & \cell{16/72} & \cell{2.8} & \cell{9.0e-06} & \cell{22} & \cell{2.4} & \cell{3.0e-05} & \cell{207} & \cell{0.4} & \cell{9.3e-07} & \cell{644} & \cell{2.4} & \cell{6.9e-05} \\
\cell{p004160} & \cell{16/79} & \cell{3.1} & \cell{4.4e-05} & \cell{28} & \cell{4.9} & \cell{5.8e-06} & \cell{253} & \cell{0.7} & \cell{1.2e-06} & \cell{681} & \cell{3.5} & \cell{3.7e-04} \\
\cell{p004200} & \cell{16/91} & \cell{6.0} & \cell{4.3e-05} & \cell{31} & \cell{8.0} & \cell{9.9e-06} & \cell{329} & \cell{1.3} & \cell{7.6e-07} & \cell{701} & \cell{5.5} & \cell{3.1e-04} \\
\cell{p004240} & \cell{17/98} & \cell{6.9} & \cell{1.1e-05} & \cell{29} & \cell{10.6} & \cell{5.9e-06} & \cell{336} & \cell{5.2} & \cell{1.4e-06} & \cell{789} & \cell{9.1} & \cell{9.6e-05} \\
\cell{p004280} & \cell{16/105} & \cell{8.7} & \cell{8.2e-05} & \cell{34} & \cell{18.6} & \cell{5.1e-06} & \cell{397} & \cell{13.9} & \cell{3.3e-06} & \cell{776} & \cell{12.0} & \cell{2.1e-04} \\
\cell{p004320} & \cell{18/114} & \cell{9.0} & \cell{1.4e-05} & \cell{34} & \cell{21.5} & \cell{6.5e-06} & \cell{375} & \cell{17.6} & \cell{1.7e-06} & \cell{733} & \cell{14.4} & \cell{7.4e-05} \\
\cell{p004360} & \cell{16/118} & \cell{10.0} & \cell{4.1e-05} & \cell{36} & \cell{32.4} & \cell{3.8e-06} & \cell{308} & \cell{20.6} & \cell{1.6e-06} & \cell{813} & \cell{21.9} & \cell{1.5e-04} \\
\cell{p004400} & \cell{18/131} & \cell{20.2} & \cell{2.1e-05} & \cell{41} & \cell{50.9} & \cell{2.9e-06} & \cell{677} & \cell{64.9} & \cell{4.2e-06} & \cell{866} & \cell{30.0} & \cell{6.3e-05} \\
\cell{p004440} & \cell{18/132} & \cell{18.7} & \cell{7.7e-05} & \cell{35} & \cell{60.4} & \cell{5.2e-06} & \cell{524} & \cell{59.7} & \cell{2.6e-06} & \cell{843} & \cell{39.5} & \cell{1.4e-04} \\
\cell{p004480} & \cell{20/146} & \cell{26.1} & \cell{1.5e-05} & \cell{42} & \cell{103.8} & \cell{1.7e-06} & \cell{584} & \cell{84.8} & \cell{5.9e-07} & \cell{790} & \cell{60.7} & \cell{9.5e-05} \\
\cell{p004520} & \cell{17/146} & \cell{29.3} & \cell{3.1e-05} & \cell{34} & \cell{99.1} & \cell{3.2e-06} & \cell{577} & \cell{102.7} & \cell{2.0e-06} & \cell{848} & \cell{96.4} & \cell{1.6e-04} \\
\cell{p004560} & \cell{17/150} & \cell{29.2} & \cell{2.6e-05} & \cell{31} & \cell{98.5} & \cell{4.2e-06} & \cell{447} & \cell{89.9} & \cell{6.7e-07} & \cell{815} & \cell{127.1} & \cell{1.2e-04} \\
\cell{p004600} & \cell{20/158} & \cell{42.5} & \cell{3.6e-05} & \cell{37} & \cell{264.6} & \cell{2.4e-06} & \cell{564} & \cell{141.3} & \cell{2.0e-06} & \cell{974} & \cell{197.3} & \cell{1.3e-04} \\
\cell{p004640} & \cell{18/172} & \cell{54.0} & \cell{2.8e-05} & \cell{36} & \cell{317.5} & \cell{2.4e-06} & \cell{649} & \cell{184.3} & \cell{1.1e-06} & \cell{889} & \cell{197.4} & \cell{9.7e-05} \\
\cell{p004680} & \cell{19/172} & \cell{61.2} & \cell{3.4e-05} & \cell{34} & \cell{380.9} & \cell{1.8e-06} & \cell{688} & \cell{230.6} & \cell{1.0e-06} & \cell{1042} & \cell{267.9} & \cell{9.2e-05} \\
\cell{p004720} & \cell{17/177} & \cell{68.5} & \cell{1.4e-05} & \cell{38} & \cell{539.5} & \cell{2.8e-06} & \cell{659} & \cell{269.0} & \cell{4.4e-07} & \cell{844} & \cell{290.1} & \cell{7.0e-05} \\
\cell{p004760} & \cell{20/190} & \cell{84.5} & \cell{3.7e-05} & \cell{40} & \cell{742.7} & \cell{1.5e-06} & \cell{780} & \cell{374.2} & \cell{7.4e-06} & \cell{1311} & \cell{1544.9} & \cell{8.6e-05} \\
\midrule
\multicolumn{13}{c}{Dense networks} \\ \midrule
\cell{p01580} & \cell{17/75} & \cell{1.7} & \cell{2.7e-05} & \cell{20} & \cell{3.4} & \cell{1.7e-05} & \cell{356} & \cell{0.5} & \cell{1.0e-06} & \cell{1107} & \cell{3.4} & \cell{3.1e-04} \\
\cell{p015120} & \cell{18/86} & \cell{2.9} & \cell{3.1e-06} & \cell{22} & \cell{8.0} & \cell{1.2e-05} & \cell{372} & \cell{0.9} & \cell{1.1e-07} & \cell{491} & \cell{2.6} & \cell{2.8e-05} \\
\cell{p015160} & \cell{17/97} & \cell{3.9} & \cell{3.9e-06} & \cell{22} & \cell{15.7} & \cell{5.8e-06} & \cell{501} & \cell{6.3} & \cell{5.2e-07} & \cell{640} & \cell{6.4} & \cell{4.0e-05} \\
\cell{p015200} & \cell{16/109} & \cell{5.7} & \cell{8.5e-06} & \cell{28} & \cell{37.1} & \cell{1.0e-05} & \cell{580} & \cell{16.1} & \cell{4.5e-07} & \cell{901} & \cell{12.6} & \cell{8.7e-05} \\
\cell{p015240} & \cell{19/121} & \cell{8.7} & \cell{4.9e-06} & \cell{29} & \cell{59.3} & \cell{6.3e-06} & \cell{469} & \cell{20.4} & \cell{3.5e-07} & \cell{613} & \cell{16.3} & \cell{3.9e-05} \\
\cell{p015280} & \cell{21/135} & \cell{13.4} & \cell{8.2e-06} & \cell{32} & \cell{193.6} & \cell{6.5e-06} & \cell{599} & \cell{46.3} & \cell{4.4e-07} & \cell{861} & \cell{25.1} & \cell{7.8e-05} \\
\cell{p015320} & \cell{20/152} & \cell{27.4} & \cell{6.4e-06} & \cell{33} & \cell{333.0} & \cell{4.8e-06} & \cell{736} & \cell{81.2} & \cell{5.1e-07} & \cell{1070} & \cell{44.9} & \cell{6.0e-05} \\
\cell{p015360} & \cell{19/161} & \cell{33.8} & \cell{2.6e-06} & \cell{32} & \cell{543.1} & \cell{3.0e-06} & \cell{694} & \cell{107.8} & \cell{3.8e-07} & \cell{805} & \cell{46.2} & \cell{1.9e-05} \\
\cell{p015400} & \cell{20/164} & \cell{34.2} & \cell{1.1e-05} & \cell{33} & \cell{991.1} & \cell{4.9e-06} & \cell{1042} & \cell{205.0} & \cell{1.9e-06} & \cell{946} & \cell{78.5} & \cell{7.9e-05} \\
\cell{p015440} & \cell{23/167} & \cell{41.7} & \cell{5.5e-06} & \cell{33} & \cell{1598.9} & \cell{4.8e-06} & \cell{755} & \cell{225.1} & \cell{8.0e-07} & \cell{997} & \cell{118.6} & \cell{5.9e-05} \\
\cell{p015480} & \cell{20/188} & \cell{82.0} & \cell{2.0e-05} & \cell{36} & \cell{2380.3} & \cell{4.0e-06} & \cell{854} & \cell{300.2} & \cell{9.8e-07} & \cell{872} & \cell{213.6} & \cell{8.6e-05} \\
\cell{p015520} & \cell{24/203} & \cell{103.8} & \cell{1.2e-05} & \cell{40} & \cell{3571.9} & \cell{2.5e-06} & \cell{820} & \cell{353.1} & \cell{3.2e-07} & \cell{922} & \cell{539.1} & \cell{7.1e-05} \\
\cell{p015560} & \cell{19/206} & \cell{121.1} & \cell{5.8e-06} & \cell{42} & \cell{5365.4} & \cell{1.8e-06} & \cell{823} & \cell{412.2} & \cell{1.1e-06} & \cell{1003} & \cell{776.1} & \cell{4.5e-05} \\
\bottomrule
\end{tabular}
\end{center}
\vspace{-3ex}
\end{table}
\end{scriptsize}

The results in Table~\ref{tbl:ex3} show that \ref{eq:prox_nt_scheme} outperforms all other methods when the problem size increases.
If the network is dense, \ref{eq:prox_nt_scheme} is much more efficient than SDPT3 compared to the sparse case.
It is not surprised that \ref{eq:prox_nt_scheme} also beats  both ADMM and Chambolle-Pock methods since we require a high accurate solution, which is often a disadvantage of first-order methods.

The performance profile can be considered as a standard way to compare different optimization algorithms.
A performance profile is built based on a set $\mathcal{S}$ of $n_s$ algorithms (solvers) and a collection $\mathcal{P}$ of $n_p$ problems. We  build a profile based on computational time.
We denote by $T_{ij} := \textit{computational time required to solve problem $i$ by solver $j$}$.
We compare the performance of solver $j$ on problem $i$ with the best performance of any algorithm on this problem; that is, we compute the performance ratio
$r_{ij} := \frac{T_{ij}}{\min\{T_{ik} \mid k \in \mathcal{S}\}}$.
Now, let $\tilde{\rho}_j(\tilde{\tau}) := \frac{1}{n_p}\mathrm{size}\left\{i\in\mathcal{P} \mid r_{ij}\leq \tilde{\tau}\right\}$ for
$\tilde{\tau} \in\R_{+}$. The function $\tilde{\rho}_j:\R\to [0,1]$ represents a probability for solver $j$ that a performance ratio is within a
factor $\tilde{\tau}$ of the best possible ratio. We use the term ``performance profile'' for the distribution function $\tilde{\rho}_j$ as a performance
metric.
In the following numerical examples, we plot the performance profiles in $\log_2$-scale, i.e., $\rho_j(\tau) :=
\frac{1}{n_p}\mathrm{size}\left\{i\in\mathcal{P} \mid \log_2(r_{i,j})\leq \tau := \log_2\tilde{\tau}\right\}$.

\begin{figure}[hpt!]
\vspace{-4ex}
\centering
\includegraphics[width=0.70\linewidth]{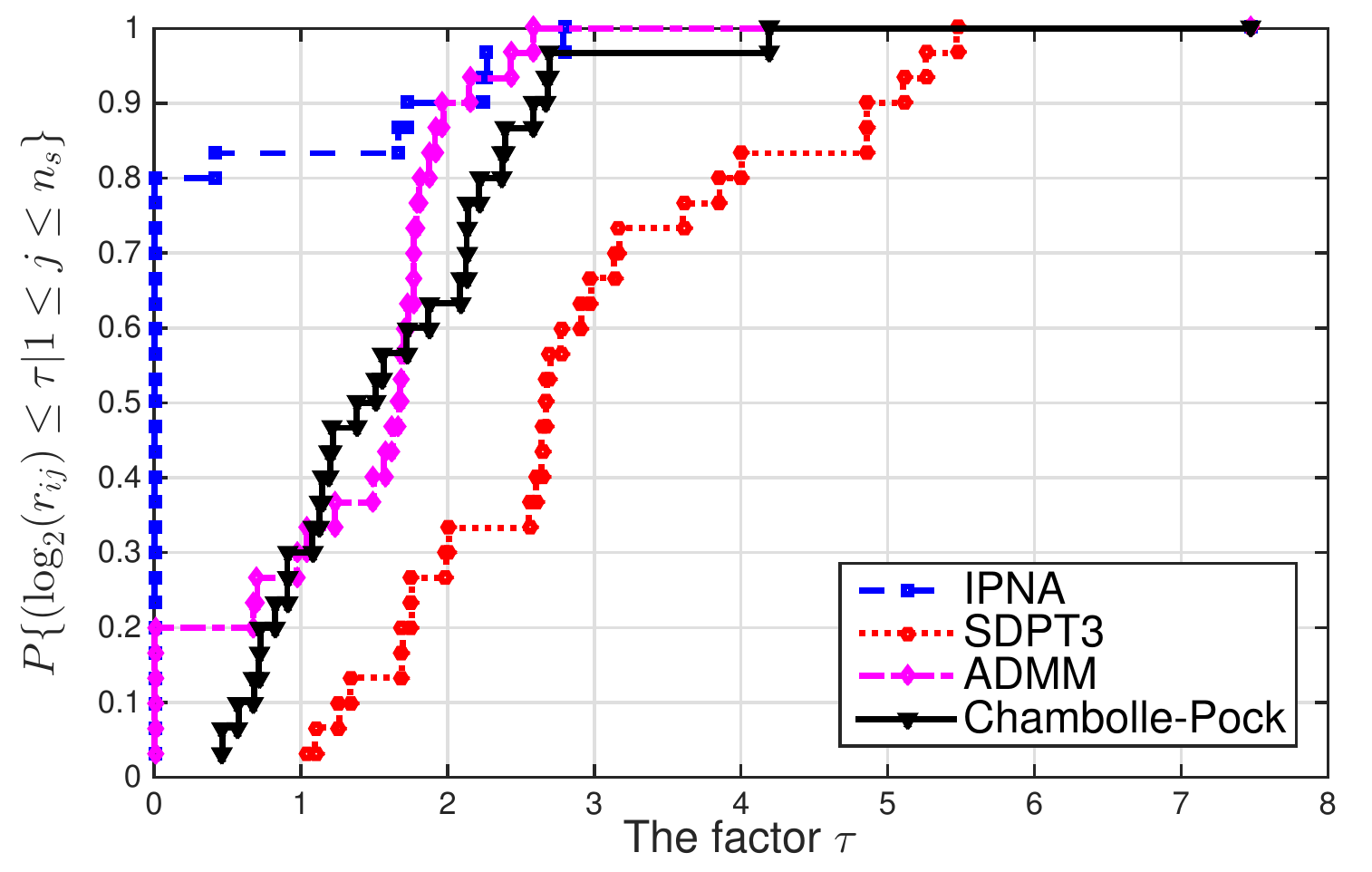}
\vspace{-1ex}
\caption{{\small Performance Profile in time[s] of $4$ methods over $30$ problem instances}}\label{fig:Profile}
\vspace{-4ex}
\end{figure}

Figure~\ref{fig:Profile} shows the performance profile of the four algorithms on the set of $30$ problem instances. 
\ref{eq:prox_nt_scheme} achieves $24/30$ ($80\%$) with the best performance, while ADMM obtains $6/30$ ($20\%$) with the best performance.
In terms of time, both inexact proximal Newton method and first-order methods outperform SDPT3 in this experiment. 
We can also see from Table~\ref{tbl:ex3} that ADMM gives the best solution quality in most cases, while CP gives the worst solution quality.

\beforesubsubsec
\subsubsection{\textbf{The effect of inexactness to the convergence of \ref{eq:prox_nt_scheme}}}
\aftersubsubsec
Now, we show how the accuracy of inexact oracles affects the overall convergence of \ref{eq:prox_nt_scheme} when solving \eqref{ex_2:l1tvd}.
As indicated by Theorems \ref{th:global_convergence3}, \ref{th:local_covergence1}, and \ref{th:local_covergence2}, \ref{eq:prox_nt_scheme} can achieve different local convergence rates, or can diverge. In this experiment, we  analyze the convergence or divergence of \ref{eq:prox_nt_scheme} under different accuracy levels of inexact oracles. 

We use the same model \eqref{ex_2:l1tvd} and generate data according to Subsection \ref{eg:Network} but using $A := \texttt{rand}(p, 0.1p)$, where $p = 500$. 
For configuration of the experiment, we set the maximum number of iterations at $100$ as a safeguard, but also terminate the algorithm if $\lambda_k \leq 10^{-9}$ and the relative objective value satisfies $F(x^k) - F^{\star} \leq \varepsilon\max\set{1, \vert F^{\star}\vert}$, where $\varepsilon = 10^{-11}$ for the linear convergence rate, and $\varepsilon = 10^{-12}$ for the quadratic convergence rate, respectively.
The optimal value $F^{\star}$ is computed by running SDPT3 to the best accuracy.
We also choose $\delta := \frac{1}{100}$ as suggested by Theorem~\ref{th:local_covergence2}(a).

The global convergence of \ref{eq:prox_nt_scheme} is reflected in Figure \ref{fig:global_convergence}.
\begin{figure}[htp!]
\begin{center}
\includegraphics[width=0.95\linewidth]{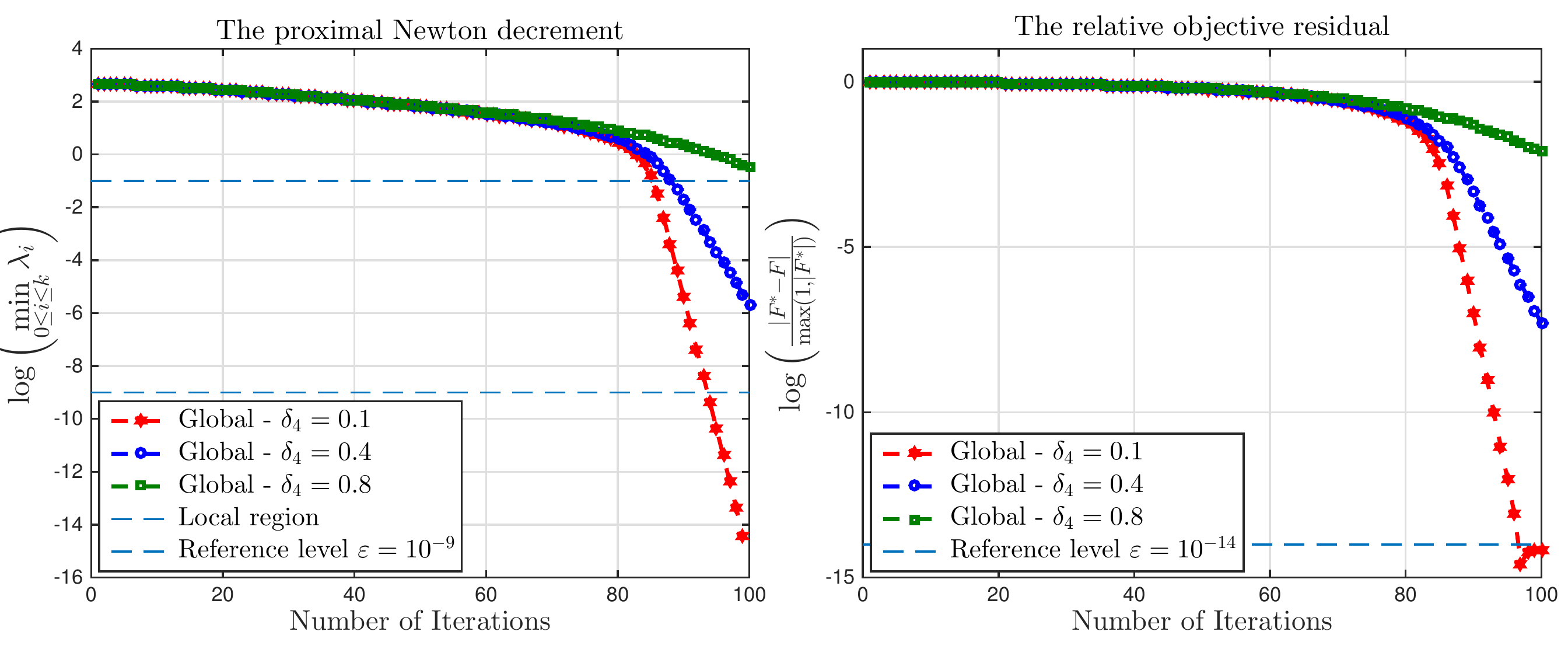}
\vspace{-1ex}
\caption{{\small Global convergence behavior of \ref{eq:prox_nt_scheme} in Theorem \ref{th:global_convergence3}.}}\label{fig:global_convergence}
\vspace{-6ex}
\end{center}
\end{figure}
The left plot reveals the inexact proximal Newton decrement $\lambda_{k_{\ast}} := \min\set{\lambda_i \mid 1 \leq i \leq k}$ computed from different accuracy levels of the subproblem in \eqref{eq:inexact_subp} as presented in Theorem \ref{th:global_convergence3}. 
Clearly, the more accurate is \eqref{eq:inexact_subp}, the faster convergence in $\lambda_k$ is achieved.
If $\delta_4$ is too rough, e.g.,  $\delta_4 = 0.8$, then both global and local behaviors of the algorithm become worse.
The right plot provides the convergence of the relative objective residuals $\frac{F(x^k) - F^{\star}}{\max\set{\vert F^{\star}\vert, 1}}$ under different accuracy level $\delta_4$ of the subproblem.
We can see  that the global convergence rate is sublinear, i.e., $\BigO{\frac{1}{k}}$, while its local convergence can range from linear to quadratic. 

Our next step is to verify the local convergence of Theorem \ref{th:local_covergence2}, and how inexact oracles affect the local convergence of  \ref{eq:prox_nt_scheme}. 
By choosing different values of $\delta$ we obtain different levels of inexact oracles in $\psi^{\ast}$.
Figure \ref{fig:local_ln_convergence}, Figure \ref{fig:local_sl_convergence}, and Figure \ref{fig:local_qr_convergence} show an R-linear, R-superlinear, and R-quadratic convergence rate of \ref{eq:prox_nt_scheme}, respectively. Here, the reference level $\varepsilon$ representing the desired accuracy of the solution is given in the legend of these figures.
\begin{figure}[htp!]
\vspace{-3ex}
\begin{center}
\includegraphics[width=0.9\linewidth]{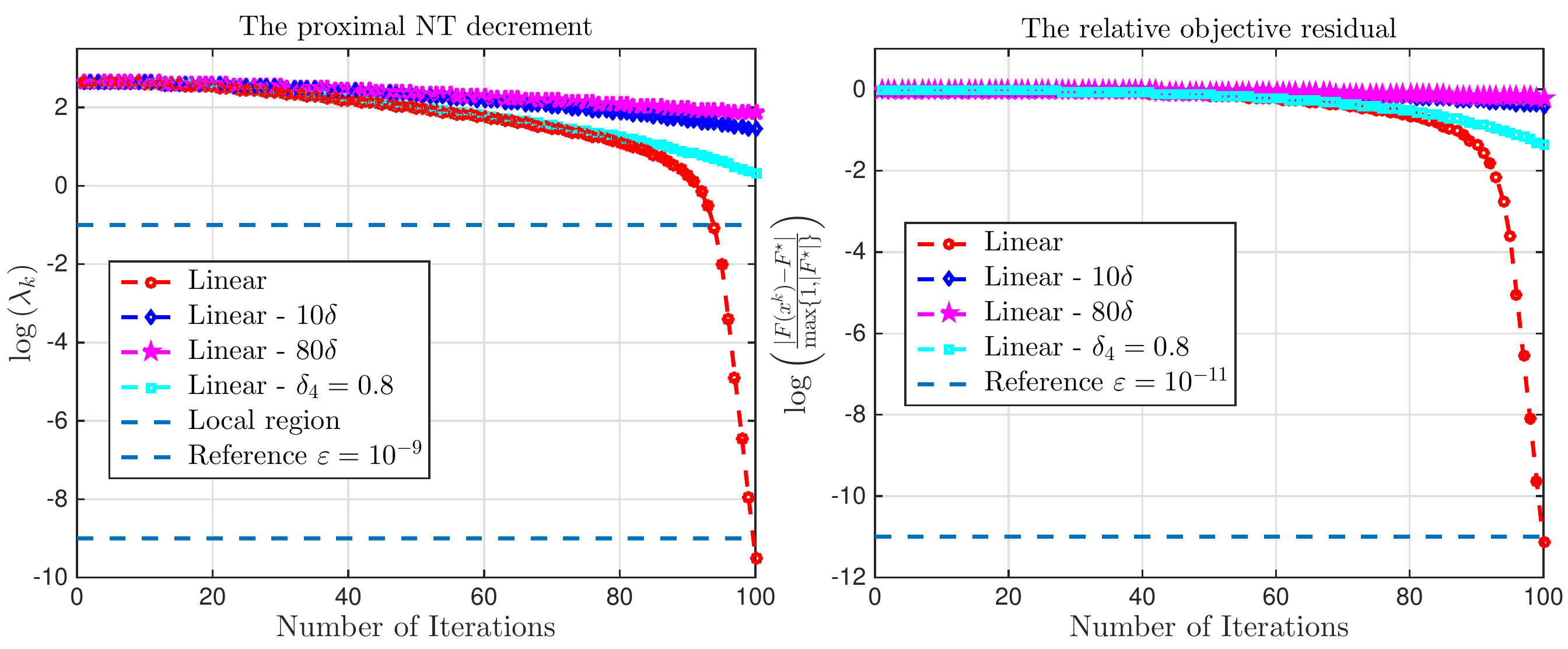}
\vspace{-1ex}
\caption{Local linear convergence of \ref{eq:prox_nt_scheme} under the effect of inexact oracles.}\label{fig:local_ln_convergence}
\vspace{-6ex}
\end{center}
\end{figure}

\begin{figure}[htp!]
\vspace{-4ex}
\begin{center}
\includegraphics[width=0.9\linewidth]{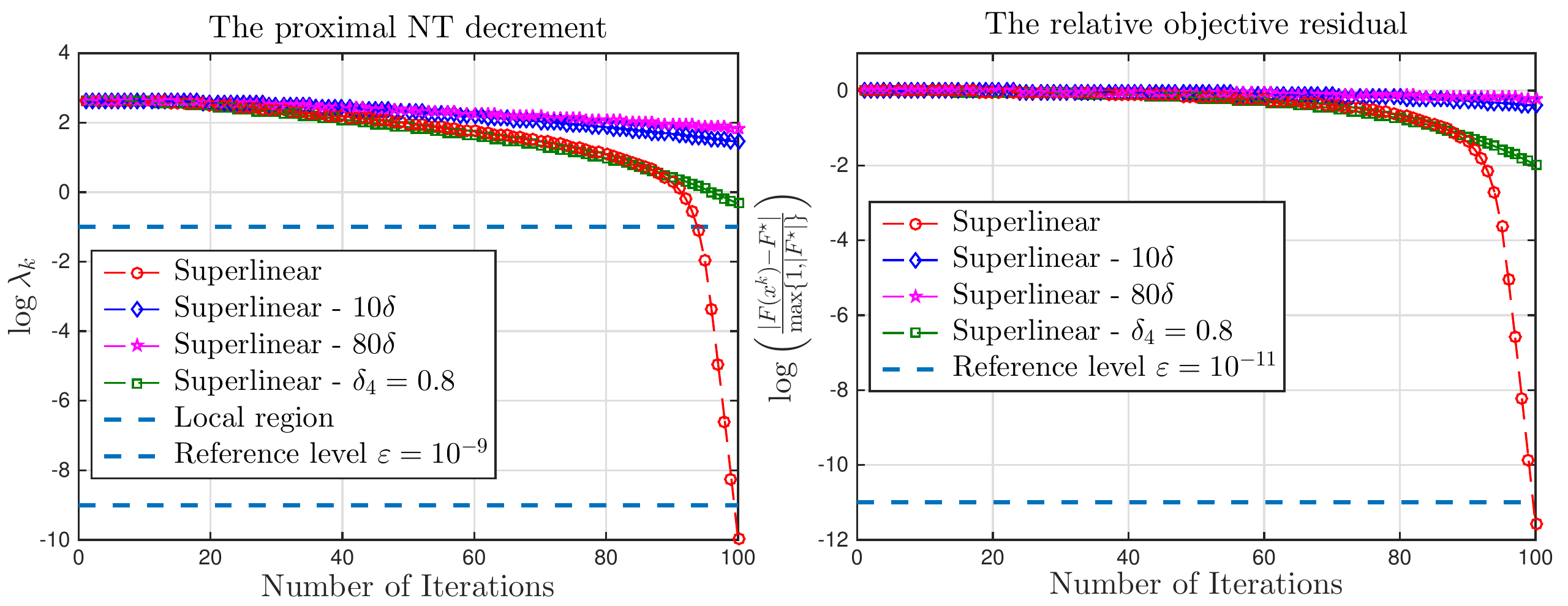}
\vspace{-1ex}
\caption{Local superlinear convergence of \ref{eq:prox_nt_scheme} under the effect of inexact oracles.}\label{fig:local_sl_convergence}
\vspace{-6ex}
\end{center}
\end{figure}

\begin{figure}[htp!]
\begin{center}
\includegraphics[width=0.9\linewidth]{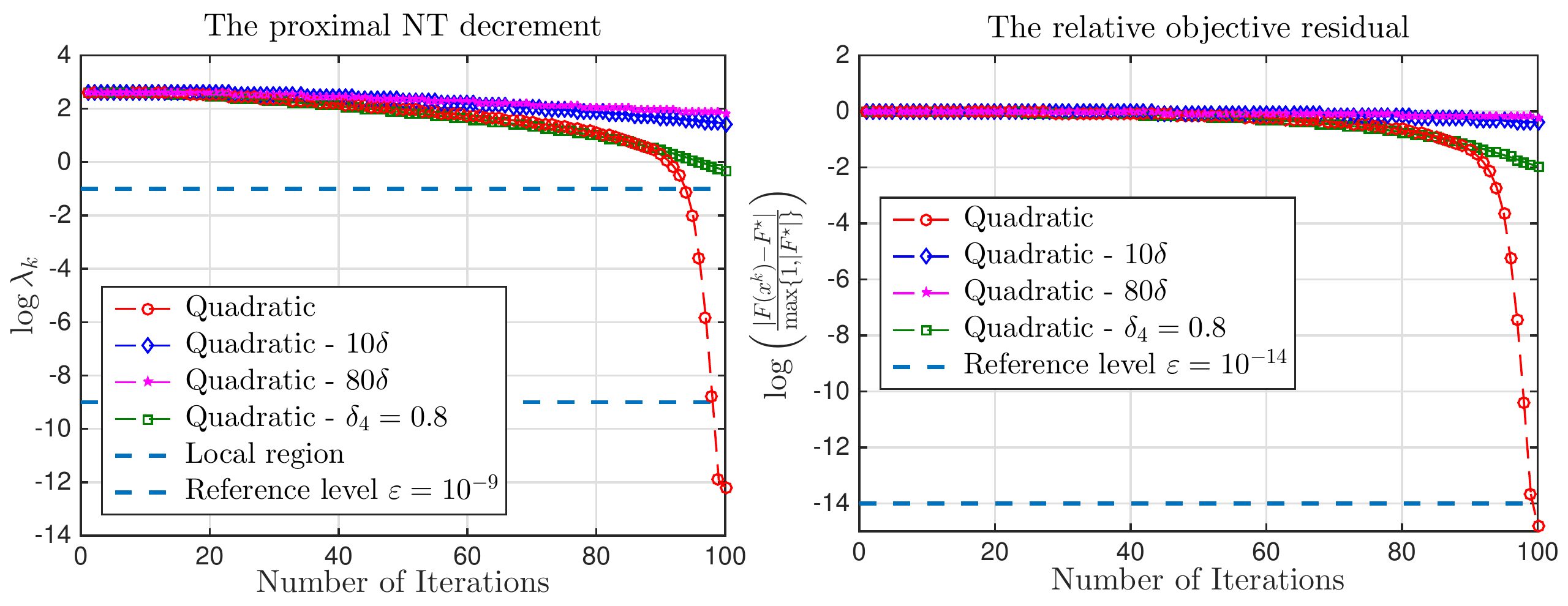}
\vspace{-1ex}
\caption{Local quadratic convergence of \ref{eq:prox_nt_scheme} under the effect of inexact oracles. }
\label{fig:local_qr_convergence}
\vspace{-5ex}
\end{center}
\end{figure}

As we can see from Figure~\ref{fig:local_ln_convergence}, if we choose the parameters as in Theorem \ref{th:local_covergence2}(a) to reflect a local linear convergence rate, then we observe a sublinear convergence in a few dozen of iterations due to slow global convergence rate, but  a fast local convergence at the last iterations.
If we multiply the accuracy $\delta$ by $10$ and $80$, respectively, we can see from this figure that the linear convergence is lost and the method tends to diverge. 
If we choose the inexact level $\delta_4 = 0.8$ in the subproblem  \eqref{eq:inexact_subp}, then we also get a significantly slow linear convergence rate, thus confirming our theory. 

The superlinear and quadratic convergence rates are reflected in Figure \ref{fig:local_sl_convergence} and  Figure \ref{fig:local_qr_convergence}, respectively. 
Both figures look very similar, but the quadratic convergence case achieves much higher accuracy up to $10^{-12}$ after around $100$ iterations.
If we increase the inexactness of the inexact oracle by multiplying $\delta$ by $10$ and $80$, respectively, then \ref{eq:prox_nt_scheme} shows a slow convergence or even divergence. 
If we increase the inexactness $\delta_4$ of the subproblem in \eqref{eq:inexact_subp} to $0.8$, we again obtain a much slower convergence rate.

\beforesubsubsec
\subsection{\bf \ref{eq:prox_nt_scheme} for Graphical Lasso with inexact oracles}
\aftersubsubsec
Proximal Newton-type methods have been proven to be efficient for graphical LASSO \cite{Friedman2008,Hsieh2011,Olsen2012}.
In this example, we also show that our theory can be useful for this problem.

Consider a recent graphical LASSO model in \cite{zhang2018linear}. 
Assume that the data matrix has a sparse structure $G$, then the original model can be written as
\begin{equation}\label{eq:ex1_gr_p}
F^{\star} := \min_{Y\succ 0} \Big\{ F(Y) := \iprods{C_{\lambda}, Y}-\log\det(Y) ~~~~\mathrm{s.t.}~~~ Y_{ij}=0,~\forall (i,j)\notin G \Big\},
\end{equation}
where $C_{\lambda}$ is a soft-threshold operator which serves as the penalty item, that can recover the sparse graph $G$. 
This problem can be cast into \eqref{eq:prob_main} with $\varphi(A^{\top}Y) := \delta_{\set{Y_{ij}=0,~\forall\ (i,j)\notin G}}(Y)$ the indicator function of the feasible set and $\psi(Y) := \iprods{C_{\lambda}, Y} - \log\det(Y)$.
The dual problem of \eqref{eq:ex1_gr_p} becomes \eqref{eq:prob_dual_ori2}, which has the same form as problem (15) in \cite{zhang2018linear}.

We focus on aspects of inexactness: (i) the inexactness of solutions of \eqref{eq:inexact_subp} and (ii) the Hessian and the Newton decrement measurement reflected by Cholesky decomposition. 
Instead of using linesearch, we use the stepsize given by \eqref{eq:step_size2} based on the self-concordance theory.

For (i), we compute the Newton direction inexactly by controlling the tolerance of the pre-conditioned conjugate gradient (PCG) method. 
For (ii), we use an incomplete Cholesky decomposition instead of the exact Cholesky decomposition.
More concretely, when we approximately compute the lower triangular matrix $\tilde{L}$ of $C_{\lambda} - AX$ in the dual problem  \eqref{eq:prob_dual_ori2} such that $C_{\lambda} - AX \approx \tilde{L}\tilde{L}^{\top}$, we fill all other off-diagonal elements by zeros, if the corresponding original entries of the input matrix are zeros.
In this way, we further take advantages of sparsity structure of the original method and bring the inexactness to the Hessian-related quantity indirectly.
Due to the expensive computation, we do not exactly calculate $\delta_2$ and $\delta_3$ in this example since they do not enter in our stepsize as well as the subproblem \eqref{eq:inexact_subp}.

For data, we use both the real biology dataset from \cite{Li2010} and the synthetic data with sample covariance matrices and the threshold parameter generated from real sparse matrix collection in \cite{zhang2018linear} (\href{https://sparse.tamu.edu/}{https://sparse.tamu.edu/}).
Since the Newton-CG (NCG) method in the latest paper \cite{zhang2018linear} already compared and beat QUIC \cite{Hsieh2011} in their experiments, we make use of the chordal property and only compare our algorithm with the one in \cite{zhang2018linear}.
More concretely, the parameter configuration of our algorithm is specified as follows:
\begin{compactitem}
\item We choose $\delta_0 := 0$ and $\delta_4 := 10^{-3}$ leading to $\alpha_k = \frac{1-\delta_4}{1 + (1-\delta_4)\lambda_k}$ as our stepsize.
\item We terminate the PCG for computing the search direction if the default error is below $10^{-3}$.
\item Since we do not use $\delta_2$ and $\delta_3$ in our implementation, we leave them unspecified since evaluating these quantities are expensive in this example.
\item Following \cite{zhang2018linear}, we terminate both algorithms: \ref{eq:prox_nt_scheme} and NCG by $\lambda_k \leq 10^{-6}$.
\item The maximum number of iterations in \ref{eq:prox_nt_scheme} is set to $100$.
\end{compactitem}
The results are listed in Table \ref{tbl:portf_cp1}, where $p$ is the dimension of the original graph/data, ``\texttt{iter}'' is the number of iterations, ``$\lambda_{e}$'' is the weighted norm $\lambda_k$ which is used by NCG when the algorithm is terminated, ``\texttt{soldiff}'' measures the relative solution difference of two methods for primal solutions, and ``$\texttt{t}_{\textrm{ratio}}$'' represents the time ratio of NCG over \ref{eq:prox_nt_scheme} (i.e., the ratio $\frac{t_{\mathrm{NCG}}}{t_{\mathrm{\ref{eq:prox_nt_scheme}}}}$).

\begin{table}[hpt!]
\vspace{-3ex}
\newcommand{\cell}[1]{{\!\!}#1{\!\!}}
\newcommand{\cellf}[1]{{\!\!\!}#1{\!\!}}
\newcommand{\cellbf}[1]{{\!\!}{#1}{\!\!\!}}
\begin{center}
\caption{The performance of NCG and \ref{eq:prox_nt_scheme} for solving the graphical lasso problem \eqref{eq:ex1_gr_p}.}\label{tbl:portf_cp1}
\vspace{-1ex}
\begin{tabular}{ ll | rrr | rrr | rr}
\toprule
\multicolumn{2}{c|}{Problem} & \multicolumn{3}{c|}{ \cell{\ref{eq:prox_nt_scheme}} } & \multicolumn{3}{c|}{\cell{NCG with linesearch}} & \multicolumn{2}{c}{\cell{Comparison}}\\  \midrule
\cell{Name}  & \cell{$p$} & \cell{\texttt{iter}} & \cell{\!\texttt{time[s]}\!}  & \cell{$\lambda_{e}$} & \cell{\texttt{iter}} & \cell{\!\texttt{time[s]}\!}  & \cell{$\lambda_{e}$} & \cell{\texttt{soldiff}} & \cell{\!$\texttt{t}_{\textrm{ratio}}$\!}\\ \midrule
\multicolumn{10}{c}{Synthetic Data} \\ \midrule
\cell{3eltdual} & \cell{9000}& \cell{4} & \cell{11.45} & \cell{3.0e-07} & \cell{3} & \cell{13.15} & \cell{2.7e-07}& \cell{3.0e-12} & \cell{1.15}  \\
\cell{bcsstm38} & \cell{8032}& \cell{3} & \cell{2.84} & \cell{6.1e-07} & \cell{3} & \cell{4.36} & \cell{5.2e-10}& \cell{4.5e-12} & \cell{1.54}  \\
\cell{cage8} & \cell{1015}& \cell{7} & \cell{62.99} & \cell{3.2e-07} & \cell{4} & \cell{116.64} & \cell{3.1e-10}& \cell{1.1e-09} & \cell{1.85}  \\
\cell{cryg10000} & \cell{10000}& \cell{6} & \cell{543.31} & \cell{4.1e-08} & \cell{4} & \cell{634.06} & \cell{2.8e-10}& \cell{5.6e-12} & \cell{1.17}  \\
\cell{FlyingRobot1} & \cell{798}& \cell{6} & \cell{4.23} & \cell{5.3e-07} & \cell{4} & \cell{9.98} & \cell{5.2e-11}& \cell{5.7e-10} & \cell{2.36}  \\
\cell{G32} & \cell{2000}& \cell{4} & \cell{2.79} & \cell{7.3e-07} & \cell{4} & \cell{5.17} & \cell{7.5e-12}& \cell{4.9e-11} & \cell{1.85}  \\
\cell{G50} & \cell{3000}& \cell{5} & \cell{5.49} & \cell{3.9e-09} & \cell{4} & \cell{7.75} & \cell{6.1e-11}& \cell{2.6e-13} & \cell{1.41}  \\
\cell{G57} & \cell{5000}& \cell{5} & \cell{9.10} & \cell{2.1e-07} & \cell{4} & \cell{12.75} & \cell{2.6e-10}& \cell{5.5e-12} & \cell{1.40}  \\
\cell{lshp2614} & \cell{2614}& \cell{6} & \cell{108.29} & \cell{1.8e-07} & \cell{4} & \cell{162.54} & \cell{7.4e-11}& \cell{1.3e-10} & \cell{1.50}  \\
\cell{lshp3025} & \cell{3025}& \cell{6} & \cell{137.24} & \cell{3.8e-07} & \cell{4} & \cell{215.66} & \cell{6.7e-11}& \cell{3.2e-10} & \cell{1.57}  \\
\cell{NotreDamey} & \cell{2114}& \cell{3} & \cell{1.57} & \cell{7.6e-08} & \cell{3} & \cell{2.19} & \cell{4.1e-11}& \cell{1.8e-12} & \cell{1.40}  \\
\cell{orsirr2} & \cell{886}& \cell{6} & \cell{7.17} & \cell{1.7e-07} & \cell{4} & \cell{13.35} & \cell{2.2e-10}& \cell{4.8e-10} & \cell{1.86}  \\
\cell{sherman3} & \cell{5005}& \cell{5} & \cell{56.11} & \cell{5.1e-07} & \cell{4} & \cell{99.77} & \cell{1.3e-11}& \cell{3.0e-10} & \cell{1.78}  \\
\cell{ukerbe1} & \cell{5981}& \cell{3} & \cell{5.23} & \cell{5.8e-07} & \cell{3} & \cell{8.63} & \cell{1.2e-10}& \cell{1.2e-11} & \cell{1.65}  \\
\cell{USpowerGrid} & \cell{4941}& \cell{3} & \cell{4.66} & \cell{4.9e-07} & \cell{3} & \cell{7.09} & \cell{6.7e-09}& \cell{5.2e-12} & \cell{1.52}  \\
\midrule
\multicolumn{10}{c}{Real Data} \\ \midrule
\cell{Arabidopsis} & \cell{834}& \cell{4} & \cell{1.27} & \cell{1.2e-07} & \cell{4} & \cell{1.41} & \cell{5.0e-09}& \cell{2.8e-12} & \cell{1.11}  \\
\cell{ER} & \cell{692}& \cell{4} & \cell{0.89} & \cell{1.5e-08} & \cell{4} & \cell{1.25} & \cell{5.8e-11}& \cell{8.8e-14} & \cell{1.40}  \\
\cell{hereditarybc} & \cell{1869}& \cell{4} & \cell{21.06} & \cell{2.9e-07} & \cell{4} & \cell{35.39} & \cell{1.7e-07}& \cell{7.3e-12} & \cell{1.68}  \\
\cell{Leukemia} & \cell{1255}& \cell{3} & \cell{0.60} & \cell{7.6e-08} & \cell{3} & \cell{0.76} & \cell{2.7e-09}& \cell{8.6e-13} & \cell{1.25}  \\
\cell{Lymph} & \cell{587}& \cell{4} & \cell{0.24} & \cell{8.5e-10} & \cell{3} & \cell{0.25} & \cell{9.1e-07}& \cell{2.4e-14} & \cell{1.03}  \\
\bottomrule
\end{tabular}
\vspace{-4ex}
\end{center}
\end{table}

From Table~\ref{tbl:portf_cp1}, we can see that our algorithm performs better than the state-of-the-art NCG algorithm with linesearch for both collections of datasets. 
Although for some graphs we cannot significantly accelerate, we point out that NCG has already taken the advantages of chordal structure and used linesearch, while we specify a stepsize, and the acceleration is highly related to the sparsity and the shape of the graph. 
Besides, we need slightly more iterations and have a greater $\lambda_e$ value, because we did not solve the subproblem up to a high accuracy, which leads to a smaller descent. 
However, we still met the stopping criterion and obtained almost the same solution as NCG (see the \texttt{soldiff} column in Table~\ref{tbl:portf_cp1}).
\vspace{-3ex}

\appendix
\section{The Proof of Technical Results in The Main Text}\label{sec:apendix1}
\aftersec
This appendix provides the proofs of technical results and missing concepts in the main text.

\beforesubsec
\subsection{\bf The proof of Lemma \ref{le:global_properties}: Properties of global inexact oracle}\label{apdx:le:global_properties}
\aftersubsec
(a)~Substituting $x = y$ into \eqref{eq:global_inexact_oracle}, we obtain  \eqref{eq:global_inexact_oracle_pro1} directly for all $x\in\dom{f}$.

\vspace{1ex}
\noindent (b)~Clearly, if $\iprods{g(\bar{x}), y - \bar{x}} \geq 0$ for all $y\in\dom{f}$, then $\iprods{g(\bar{x}), x^{\star} - \bar{x}} \geq 0$ for a minimizer $x^{\star}$ of $f$.
Using this relation into \eqref{eq:global_inexact_oracle}, we have
\begin{equation*}
\begin{array}{llr}
f^{\star} &= f(x^{\star}) \overset{\tiny\eqref{eq:global_inexact_oracle}}{\geq} \tilde{f}(\bar{x}) + \iprods{g(\bar{x}), x^{\star} - \bar{x}} + \omega((1-\delta_0)\tnorm{x^{\star} - \bar{x}}_{\bar{x}})& \vspace{1ex}\\
& \geq \tilde{f}(\bar{x}) + \omega((1-\delta_0)\tnorm{x^{\star} - \bar{x}}_{\bar{x}}) &~~~\text{since}~~\iprods{g(\bar{x}), x^{\star} - \bar{x}} \geq 0\vspace{1ex}\\
& \geq \tilde{f}(\bar{x}) \overset{\tiny\eqref{eq:global_inexact_oracle_pro1}}{\geq} f(\bar{x}) - \delta_1, 
\end{array}
\end{equation*}
which implies $f^{\star} \leq f(\bar{x}) \leq f^{\star} + \delta_1$.

\vspace{1ex}
\noindent(c)~Let $\nabla f(x)$ be a (sub)gradient of $f$ at $x \in\intx{\dom{f}}$.
For $y\in\dom{f}$, it follows from \eqref{eq:global_inexact_oracle} and \eqref{eq:global_inexact_oracle_pro1} that
\begin{equation*}
f(y)\geq f(x) + \iprod{\nabla f(x),y-x} \overset{\tiny\eqref{eq:global_inexact_oracle_pro1}}{\geq} \tilde{f}(x)+\iprod{\nabla f(x),y-x}.
\end{equation*}
Subtracting this estimate from the second inequality of \eqref{eq:global_inexact_oracle}, we have
\begin{equation}\label{eq:bd_prev}
\iprod{\nabla f(x)-g(x),y-x}\leq \omega_{\ast}\left((1+\delta_0)\tnorm{y - x}_x\right) + \delta_1,
\end{equation}
provided that $\tnorm{y - x}_x < \frac{1}{1+\delta_0}$. Let us consider  an arbitrary $z\in\R^p$ such that
\begin{equation*}
\tnorm{\nabla f(x)-g(x)}_x^{\ast}=\abs{\iprod{\nabla f(x)-g(x),z}}~~~\textrm{and}~~~\tnorm{z}_x=1.
\end{equation*}
Let us choose $y = y_{\tau}(x) := x + \tau\textrm{sign}(\iprod{\nabla f(x)-g(x),z})z$ for some $\tau > 0$.
Since $x\in\intx{\dom{f}}$, for sufficiently small $\tau$, $y\in\dom{f}$.
Moreover, \eqref{eq:bd_prev} becomes $\tau\tnorm{\nabla f(x)-g(x)}_x^{\ast}\leq\omega_{\ast}\left((1+\delta_0)\tau\right)+\delta_1$,
which is equivalent to
\begin{equation}\label{eq:bd_grad}
\tnorm{\nabla f(x)-g(x)}_x^{\ast}\leq s(\tau;\delta_0,\delta_1) := \tfrac{\omega_{\ast}\left((1+\delta_0)\tau\right)+\delta_1}{\tau}.
\end{equation}
Let us take $\tau := \frac{\delta_2}{(1+\delta_0 + \delta_2)(1+\delta_0)}$ for some sufficiently small $\delta_2 > 0$.
Then, we can easily check that $\tnorm{y - x}_x = \tau < \frac{1}{1+\delta_0}$.
In this case, the right-hand side of \eqref{eq:bd_grad} becomes
\begin{equation}\label{eq:bd_grad:c}
s(\tau;\delta_0,\delta_1) = \tfrac{(1+\delta_0)(1+\delta_0 + \delta_2)}{\delta_2}\left[\delta_1 + \ln\left(1 + \tfrac{\delta_2}{1+\delta_0}\right)\right] - (1+\delta_0),
\end{equation}
for any $\delta_2 >0$.
Minimizing  the right-hand side of \eqref{eq:bd_grad:c} w.r.t. $\delta_2 > 0$, we can show that  the minimum is attained at $\delta_2 := \delta_2(\delta_0,\delta_1) > 0$ which is the unique solution $\delta_2 := (1+\delta_0)\omega^{-1}(\delta_1)$ of $\omega \big( \tfrac{\delta_2}{1+\delta_0} \big) = \delta_1$ in $\delta_2$, where $\omega^{-1}$ is the inverse function of $\omega$ (note that $\omega(\tau) = \tau - \ln(1 + \tau)$).

Now, substituting $\delta_2 = \delta_2(\delta_0,\delta_1)$ back into $s(\tau;\delta_0,\delta_1)$, we can see that the minimum value of \eqref{eq:bd_grad:c} is $\delta_2 := \delta_2(\delta_0,\delta_1) = (1+\delta_0)\omega^{-1}(\delta_1)$.
By directly using the definition of $\omega$, it is obvious that if $\delta_0 \to 0$ and $\delta_1\to 0$, then $\delta_2 := (1+\delta_0)\omega^{-1}(\delta_1) \to 0$.

\vspace{1ex}
\noindent(d)~Let us consider the function $\varphi(y) := f(y) - \iprods{\nabla{f}(x^0), y}$ for some $x^0 \in \dom{f}$.
It is clear that $\nabla{\varphi}(x^0) = 0$, which shows that $x^0$ is a minimizer of $\varphi$.
Hence, we have $\varphi(x^0) \leq \varphi(x - tH(x)^{-1}h(x))$ for some $t > 0$ such that $x\in\intx{\dom{f}}$ and $x - tH(x)^{-1}h(x) \in\dom{f}$.
If we define $\tilde{\varphi}(x) := \tilde{f}(x) - \iprods{\nabla{f}(x^0), x}$, and $h(x) := g(x) - \nabla{f}(x^0)$, then, by using \eqref{eq:global_inexact_oracle}, we can further derive
\begin{equation*}
\varphi(x^0) \leq \varphi(x - tH(x)^{-1}h(x)) \leq \tilde{\varphi}(x) - t(\tnorm{h(x)}_{x}^{\ast})^2 + \omega_{\ast}\left( (1+\delta_0)t \tnorm{h(x)}_{x}^{\ast}) \right) + \delta_1.
\end{equation*}
Minimizing the right-hand side of the last estimate w.r.t. $t > 0$, we obtain
\begin{equation*}
\varphi(x^0) \leq   \tilde{\varphi}(x) - \omega\left(\tfrac{\tnorm{h(x)}_{x}^{\ast}}{1+\delta_0}\right) + \delta_1,
\end{equation*}
given $t = \frac{1}{(1+\delta_0)(1+\delta_0+\tnorm{h(x)}_x^{\ast})}$.
Using the definition of $\varphi$ and the Cauchy-Schwarz inequality, we have
\begin{equation*}
\begin{array}{ll}
\omega\left(\frac{\tnorm{h(x)}_{x}^{\ast}}{1+\delta_0}\right) &\leq \tilde{f}(x) - f(x^0) - \iprods{\nabla{f}(x^0), x - x^0} + \delta_1 \vspace{-0.5ex}\\
&\overset{\tiny\eqref{eq:global_inexact_oracle}}{\leq} -\omega((1-\delta_0)\tnorm{x-x^0}_x)+\iprods{g(x)-\nabla f(x^0),x-x^0}+\delta_1 \vspace{1ex}\\
& \leq \tnorm{h(x)}_{x}^{\ast}\tnorm{x-x^0}_x + \delta_1.
\end{array}
\end{equation*}
By letting $x^0 = y$ into this inequality, we obtain exactly \eqref{eq:global_inexact_oracle_pro6a}.
\Eproof

\beforesubsec
\subsection{\bf The proof of Lemma \ref{le:norm_relation}: Properties of local inexact oracle}\label{apdx:le:norm_relation}
\aftersubsec
From the second line of \eqref{eq:local_inexact_oracle}, for any $u\in\R^p$, we have 
\begin{equation*}
(1-\delta_3)^2\norms{u}_x^2  \leq  \tnorm{u}_x^2 \leq (1+\delta_3)^2\norms{u}_x^2,
\end{equation*}
which implies the first expression of \eqref{eq:norm_relation}.

Using again the second line of \eqref{eq:local_inexact_oracle}, we have $\frac{1}{(1+\delta_3)^2}\nabla^2{f}(x)^{-1} \preceq H(x)^{-1} \preceq \frac{1}{(1-\delta_3)^2}\nabla^2{f}(x)^{-1}$.
Hence, for any $v\in\R^p$, one has
\begin{equation*}
\frac{1}{(1+\delta_3)^2}(\norms{v}_x^{*})^2   \leq  (\tnorm{v}_x^{*})^2\leq  \frac{1}{(1-\delta_3)^2}(\norms{v}_x^{*})^2,
\end{equation*}
which implies the second expression of  \eqref{eq:norm_relation}.

Now, we prove \eqref{eq:key_bound3}.
For any $x, y \in \Xc$, using \eqref{eq:norm_relation} with $u := y - x$, we have
\begin{equation}\label{eq:proof_keybound3a}
\frac{\tnorm{y-x}_x}{(1 + \delta_3)} \leq \norm{y-x}_x \leq \frac{\tnorm{y-x}_x}{(1 - \delta_3)}.
\end{equation}
For  $x, y \in \Xc$ such that $\tnorm{y-x}_x < 1-\delta_3$ for $\delta_3 \in [0, 1)$, the estimate \eqref{eq:proof_keybound3a} implies that 
\begin{equation}\label{eq:proof_keybound3}
(1 - \norms{y-x}_x)^2 \geq \left(1 - \frac{\tnorm{y-x}_x}{1-\delta_3}\right)^2 = \frac{\left(1 - \delta_3 - \tnorm{y-x}_x\right)^2}{(1-\delta_3)^2}.
\end{equation}
Since $\tnorm{y-x}_x < 1-\delta_3$, by \eqref{eq:proof_keybound3a}, we have $\norm{y-x}_x < 1$.
Hence, by \cite[Theorem 4.1.6]{Nesterov2004}, we can show that
\begin{equation}\label{eq:proof_keybound3d}
\left(1 - \norm{y-x}_x\right)^2\nabla^2f(x) \preceq \nabla^2f(y) \preceq \frac{1}{\left(1 - \norm{y-x}_x\right)^{2}}\nabla^2f(x).
\end{equation}
Combining \eqref{eq:proof_keybound3d} and \eqref{eq:proof_keybound3}, and using again \eqref{eq:local_inexact_oracle}, we can further derive
\begin{equation*}
\begin{array}{ll}
H(y) &\overset{\tiny\eqref{eq:local_inexact_oracle}}{\succeq} (1-\delta_3)^2\nabla^2{f}(y) \overset{\tiny\eqref{eq:proof_keybound3d}}{\succeq} (1-\delta_3)^2 \left(1 - \norm{y-x}_x\right)^2\nabla^2{f}(x)  \overset{\tiny\eqref{eq:proof_keybound3}}{\succeq}  \left(1 - \delta_3 - \tnorm{y-x}_x\right)^2\nabla^2{f}(x)  \vspace{1ex}\\
& \overset{\tiny\eqref{eq:local_inexact_oracle}}{\succeq}  \Big[\frac{1 - \delta_3 - \tnorm{y-x}_x}{1+\delta_3}\Big]^2H(x),
\end{array}
\end{equation*}
and
\begin{align*}
H(y) \overset{\tiny\eqref{eq:local_inexact_oracle}}{\preceq} (1+\delta_3)^2\nabla^2{f}(y) \overset{\tiny\eqref{eq:proof_keybound3d}}{\preceq} \tfrac{(1+\delta_3)^2}{\left(1 - \norm{y-x}_x\right)^2} \nabla^2f(x) \overset{\tiny\eqref{eq:proof_keybound3}}{\preceq} \left[ \tfrac{(1-\delta_3)(1+\delta_3)}{1 - \delta_3 - \tnorm{y-x}_x}\right]^2\nabla^2f(x) \overset{\tiny\eqref{eq:local_inexact_oracle}}{\preceq} \left[ \tfrac{1+\delta_3}{1 + \delta_3 - \tnorm{y-x}_x}\right]^2H(x).
\end{align*}
Therefore, we obtain the first estimate of \eqref{eq:key_bound3} from these expressions.

From the second line of  \eqref{eq:local_inexact_oracle}, we have $-(2\delta_3 - \delta_3^2)\nabla^2{f}(x) \preceq H(x) - \nabla^2 f(x) \preceq (2\delta_3 + \delta_3^2)\nabla^2 f(x)$.
If we define $G_x := [\nabla^2 f(x)]^{-1/2}(\nabla^2 f(x) - H(x))[\nabla^2 f(x)]^{-1/2}$, then the last estimate implies that
\begin{equation}\label{eq:proof_keybound3e}
\norm{G_x} \leq 2\delta_3+\delta_3^2.
\end{equation}
Moreover, by \eqref{eq:proof_keybound3a}, \eqref{eq:proof_keybound3d},  \eqref{eq:proof_keybound3e}, and the Cauchy--Schwarz inequality in (\textit{i}), we can further derive
\begin{equation*}
\begin{array}{lcl}
 \tnorm{(\nabla^2 f(x)-H(x))v}_y^{\ast} & \overset{\tiny\eqref{eq:proof_keybound3a}}{\leq}  & \tfrac{1}{1-\delta_3} \norms{(\nabla^2 f(x)-H(x))v}_y^{\ast} \vspace{1ex}\\
& \overset{\tiny\eqref{eq:proof_keybound3d}}{\leq} & \tfrac{1}{1-\delta_3} \Big( v^{\top} (\nabla^2 f(x) - H(x))  \tfrac{1}{(1 - \norms{y-x}_x)^2}\nabla^2 f(x)^{-1} (\nabla^2 f(x) - H(x)) v \Big)^{1/2} \vspace{1ex}\\
&=&  \tfrac{1}{(1-\delta_3)(1 - \norm{y-x}_x)} \norms{G_x [\nabla^2 f(x)]^{1/2}v} \vspace{1ex}\\
& \overset{\tiny(i)}{\leq} &  \tfrac{1}{(1-\delta_3)(1 - \norm{y-x}_x)} \norm{G_x} \norm{v}_x \vspace{1ex}\\
& \overset{\tiny\eqref{eq:proof_keybound3e},\eqref{eq:proof_keybound3a}}{\leq} & \tfrac{2\delta_3 + \delta_3^2}{(1-\delta_3)^2\left(1 - (1-\delta_3)^{-1} \tnorm{y-x}_x\right)} \tnorm{v}_x \vspace{1ex}\\
& = & \tfrac{2\delta_3 + \delta_3^2}{(1-\delta_3)(1 -\delta_3 - \tnorm{y-x}_x)} \tnorm{v}_x,
\end{array}
\end{equation*}
which is exactly the second estimate of \eqref{eq:key_bound3}.
\Eproof

\beforesubsec
\subsection{\bf The proof of Lemma \ref{le:inexact_oracle_bounds}: Computational inexact oracle}\label{apdx:le:inexact_oracle_bounds}
\aftersubsec
(a)~We first prove the left-hand side inequality of \eqref{eq:global_inexact_oracle}.
Since $f$ is standard self-concordant, for any $x, y\in\dom{f}$ and $\alpha \in [0, 1]$, we have
\begin{equation}\label{eq:proof_key1a}
\begin{array}{lcl}
f(y) &\overset{(i)}{\geq} & f(x) + \iprods{\nabla{f}(x), y-x} + \omega(\norm{y-x}_x) \vspace{1ex}\\
&\overset{\tiny\eqref{eq:inexact_oracle2}}{\geq} & \hat{f}(x) + \iprods{g(x), y-x}  - \varepsilon + \iprods{\nabla{f}(x) - g(x), y-x} + \omega(\norm{y-x}_x)  \vspace{1ex}\\
&\overset{\tiny(ii),\eqref{eq:norm_relation}}{\geq} & \hat{f}(x) + \iprods{g(x), y-x} - \varepsilon - \tnorm{\nabla{f}(x) - g(x)}^{\ast}_{x}\tnorm{y-x}_x + \omega((1-\delta_3)\tnorm{y-x}_x) \vspace{1ex}\\
&\geq& \hat{f}(x) + \iprods{g(x), y-x} + \omega(\alpha(1-\delta_3)\tnorm{y-x}_x)  - \varepsilon \vspace{1ex}\\
&& - {~} \delta_2\tnorm{y-x}_x + \omega((1-\delta_3)\tnorm{y-x}_x) - \omega(\alpha(1-\delta_3)\tnorm{y-x}_x),
\end{array}
\end{equation}
where (\textit{i}) follows from \cite[Theorem 4.1.7]{Nesterov2004} and (\textit{ii}) follows from the Cauchy-Schwarz inequality. 

Let $\gamma := 1 - \delta_3 \in (0, 1]$. 
We consider the function 
\begin{equation*}
\begin{array}{ll}
\underline{\psi}(t) &:= -\delta_2 t + \omega(\gamma t) - \omega(\alpha\gamma t) \vspace{1ex}\\
& = \gamma t - \ln(1 + \gamma t)  - \delta_2 t  - \alpha\gamma t + \ln(1 + \alpha\gamma t) \vspace{1ex}\\
& = \left[\gamma(1-\alpha) - \delta_2\right]t - \ln\left(1 + \frac{\gamma(1-\alpha)t}{1+\alpha\gamma t}\right).
\end{array}
\end{equation*}
The first and second derivatives of $\underline{\psi}$ are given respectively by
\begin{equation*}
\underline{\psi}'(t) = (1-\alpha)\gamma - \delta_2 - \frac{\gamma}{1+\gamma t} + \frac{\alpha\gamma}{1 + \alpha\gamma t}~~~\text{and}~~~\underline{\psi}''(t) = \frac{\gamma^2}{(1 + \gamma t)^2} - \frac{(\alpha\gamma)^2}{(1 + \alpha\gamma t)^2}.
\end{equation*}
Since $\alpha \in [0, 1]$, it is easy to check that $\underline{\psi}''(t) \geq 0$ for all $t \geq 0$. 
Hence, $\underline{\psi}$ is convex.

If $(1-\alpha)\gamma > \delta_2$, then $\underline{\psi}$ attains the minimum at $\underline{t}^{\ast} > 0$ as the positive solution of $\underline{\psi}'(t) = (1-\alpha)\gamma - \delta_2 - \frac{\gamma}{1+\gamma t} + \frac{\alpha\gamma}{1 + \alpha\gamma t}  = 0$.
Solving this equation for a positive solution, we get
\begin{equation*}
\underline{t}^{\ast} := \tfrac{1}{2\alpha\gamma}\left[ \sqrt{(1+\alpha)^2  + \tfrac{4\alpha\delta_2}{(1-\alpha)\gamma - \delta_2}} - (1+\alpha)\right] = \frac{2\delta_2}{\gamma\left[(1-\alpha)\gamma - \delta_2\right]\left[ \sqrt{(1+\alpha)^2  + \tfrac{4\alpha\delta_2}{(1-\alpha)\gamma - \delta_2}} + (1+\alpha)\right]} > 0.
\end{equation*}
Let choose $\alpha := 1 - \frac{2\delta_2}{1-\delta_3} = \frac{1-2\delta_2-\delta_3}{1-\delta_3}$.
To guarantee $\alpha \in [0, 1]$, we impose $2\delta_2 + \delta_3 \in [0, 1)$.
Moreover, $(1-\alpha)\gamma - \delta_2 = \delta_2 \geq 0$.
Substituting $\alpha$ and  $\gamma = 1 - \delta_3$ into $\underline{t}^{*}$, we eventually obtain
\begin{equation*}
\underline{t}^{*} = \frac{1}{(1-\delta_2 - \delta_3) + \sqrt{(1-\delta_2 - \delta_3)^2 + (1-\delta_3)(1-2\delta_2 - \delta_3)}}.
\end{equation*}
As a result, we can directly compute
\begin{equation*}
\frac{\gamma(1-\alpha)\underline{t}^{*}}{1+\alpha\gamma \underline{t}^{*}} = \frac{2\delta_2}{(2- 3\delta_2 - 2\delta_3) + \sqrt{(1-\delta_2 - \delta_3)^2 + (1-\delta_3)(1-2\delta_2 - \delta_3)}}.
\end{equation*}
In this case, we can write the minimum value $\underline{\psi}(\underline{t}^{*})$ of $\underline{\psi}$ explicitly as
\begin{equation*}
\underline{\psi}^{*}(\delta_2,\delta_3) := \underline{\psi}(\underline{t}^{*}) = \frac{\delta_2}{ \underline{c}_{23} + (1-\delta_2-\delta_3)} - \ln\left(1 + \frac{2\delta_2}{ \underline{c}_{23} + (2- 3\delta_2 - 2\delta_3)}\right) \geq 0,
\end{equation*}
where $\underline{c}_{23} := \left[(1-\delta_2 - \delta_3)^2 + (1-\delta_3)(1-2\delta_2 - \delta_3)\right]^{1/2} \geq 0$.
This is exactly third line of \eqref{eq:psi_stars}.

Now, substituting this lower bound $\underline{\psi}^{*}(\delta_2,\delta_3)$ of $\underline{\psi}$ into \eqref{eq:proof_key1a} and noting that $\alpha(1-\delta_3) = 1 - 2\delta_2 - \delta_3$, we obtain
\begin{equation*}
f(y) \geq \hat{f}(x) + \iprods{g(x), y-x} + \omega\left((1-2\delta_2-\delta_3)\tnorm{y - x}_x\right) - \varepsilon + \underline{\psi}^{*}(\delta_2,\delta_3).
\end{equation*}
Clearly, if we define $\tilde{f}(x) := \hat{f}(x) - \varepsilon ~+~ \underline{\psi}^{*}(\delta_2,\delta_3)$ and $\delta_0 := 2\delta_2 + \delta_3 \in [0, 1)$, then the last inequality is exactly the left-hand side inequality of \eqref{eq:global_inexact_oracle}.
 
(b)~To prove the right-hand side inequality of \eqref{eq:global_inexact_oracle}, we first derive
\begin{equation}\label{eq:proof_key1b}
\begin{array}{lcl}
f(y) &\overset{\tiny\textit{(a)}}{\leq} & f(x) + \iprods{\nabla{f}(x), y - x} + \omega_{\ast}(\norms{y-x}_x)  \vspace{1ex}\\
& \overset{\tiny\eqref{eq:inexact_oracle2},\eqref{eq:norm_relation},\textit{(b)}}{\leq} &  \hat{f}(x) +  \iprods{g(x), y - x} + \omega_{\ast}(\beta(1+\delta_3)\tnorm{y-x}_x) \vspace{1.5ex}\\
& & + {~} \varepsilon + \tnorm{g(x) - \nabla{f}(x)}_x^{\ast}\tnorm{y - x}_x + \omega_{\ast}((1+\delta_3)\tnorm{y-x}_x) - \omega_{\ast}(\beta(1+\delta_3)\tnorm{y-x}_x),
\end{array}
\end{equation}
where \textit{(a)} follows from \cite[Theorem 4.1.8]{Nesterov2004} and \textit{(b)} holds due to the Cauchy-Schwarz inequality.

Let $\beta \geq 1$ and $\bar{\gamma} := 1 + \delta_3 \geq 1$.
We consider the following function
\begin{equation*}
\begin{array}{ll}
\bar{\psi}(t) & := \delta_2t +\omega_{\ast}(\bar{\gamma} t) - \omega_{\ast}(\beta\bar{\gamma} t) \vspace{1ex}\\
& = \delta_2 t -\bar{\gamma} t - \ln(1-\bar{\gamma} t)+\beta\bar{\gamma} t + \ln(1-\beta\bar{\gamma} t) \vspace{1ex}\\
& =  \left[(\beta - 1)\bar{\gamma} + \delta_2\right] t  - \ln\left(1 + \frac{\bar{\gamma}(\beta-1)t}{1-\beta\bar{\gamma} t}\right).
\end{array}
\end{equation*}
First, we compute the first and second derivatives of $\bar{\psi}$ respectively as 
\begin{equation*}
\bar{\psi}'(t)=(\beta-1)\bar{\gamma}+\delta_2-\frac{\beta\bar{\gamma}}{1-\beta\bar{\gamma}t}+\frac{\bar{\gamma}}{1-\bar{\gamma}t}~~\text{and}~~\bar{\psi}''(t)=-\frac{(\beta\bar{\gamma})^2}{(1-\beta\bar{\gamma}t)^2}+\frac{\bar{\gamma}^2}{(1-\bar{\gamma}t)^2}.
\end{equation*}
Clearly, $\bar{\psi}''(t) \leq 0$ for all $0 \leq t < \frac{1}{\bar{\gamma}\beta}$.
Hence, $\bar{\psi}$ is concave in $t$.
To find the maximum value of $\bar{\psi}$, we need to solve $\bar{\psi}'(t) = 0$ for $t > 0$, and obtain
\begin{equation*}
\bar{t}^{\ast} = \tfrac{1}{2\beta\bar{\gamma}}\left(1+\beta - \sqrt{(1+\beta)^2 - \tfrac{4\beta\delta_2}{(\beta-1)\bar{\gamma}+\delta_2}}\right) = \frac{2\delta_2}{\bar{\gamma}\left[(\beta-1)\bar{\gamma}+\delta_2\right]\left[\sqrt{(1+\beta)^2 - \tfrac{4\beta\delta_2}{(\beta-1)\bar{\gamma}+\delta_2}} + 1+\beta\right]} > 0.
\end{equation*}
Let us choose $\beta := 1 + \frac{2\delta_2}{1+\delta_3} \geq 1$.
Then we can explicitly compute $\bar{t}^{*}$ as
\begin{equation*}
\bar{t}^{\ast} = \frac{1}{3(1+\delta_2+\delta_3) + \sqrt{3(1+\delta_2+\delta_3)^2 - (1+\delta_3)(1+2\delta_2+\delta_3)}} > 0.
\end{equation*}
To evaluate $\bar{\psi}(\bar{t}^{*})$, we first compute
\begin{equation*}
\frac{\bar{\gamma}(\beta-1)\bar{t}^{*}}{1 - \beta\bar{\gamma}\bar{t}^{*}} = \frac{2\delta_2}{2(1+\delta_2+\delta_3) + \sqrt{3(1+\delta_2+\delta_3)^2 - (1+\delta_3)(1+2\delta_2+\delta_3)}}.
\end{equation*}
Using this expression, we can explicitly compute the maximum value $\bar{\psi}(\bar{t}^{*})$ of $\bar{\psi}$ as
\begin{equation*}
\bar{\psi}^{*}(\delta_2,\delta_3) := \bar{\psi}(\bar{t}^{*}) = \frac{3\delta_2}{3(1+\delta_2+\delta_3) + \bar{c}_{23}} - \ln\left(1 + \frac{2\delta_2}{2(1+\delta_2+\delta_3) + \bar{c}_{23}}\right) \geq 0,
\end{equation*}
where $\bar{c}_{23} :=  \sqrt{3(1+\delta_2+\delta_3)^2 - (1+\delta_3)(1+2\delta_2+\delta_3)} \geq 0$.
Plugging this expression into \eqref{eq:proof_key1b}, and noting that $\beta(1+\delta_3) = 1 + 2\delta_2 + \delta_3 = 1 + \delta_0$, we can show that
\begin{equation*} 
f(y) \leq  \hat{f}(x)  +  \iprods{g(x), y - x} + \omega_{\ast}\left((1+\delta_0)\tnorm{y-x}_x\right)  + \varepsilon + \bar{\psi}^{*}(\delta_2,\delta_3).
\end{equation*}
Finally, by defining $\delta_1 := \max\set{0, 2\varepsilon +  \bar{\psi}^{*}(\delta_2,\delta_3) - \underline{\psi}^{*}(\delta_2,\delta_3)} \geq 0$ and noting that $\tilde{f}(x) = \hat{f}(x) - \varepsilon + \underline{\psi}^{*}$, we obtain
\begin{equation*}
f(y) \leq  \tilde{f}(x)  +  \iprods{g(x), y - x} + \omega_{\ast}\left((1+\delta_0)\tnorm{y-x}_x\right) + \delta_1,
\end{equation*}
which proves the right-hand side inequality of \eqref{eq:global_inexact_oracle}.
\Eproof

\beforesubsec
\subsection{\bf The proof of Lemma \ref{le:est_A}: Inexact oracle of the dual problem}\label{apdx:le:est_A}
\aftersubsec
Since $\varphi$ is self-concordant, by \cite[Theorem 4.1.6]{Nesterov2004} and $\delta(x) := \norm{\tilde{u}^{\ast}(x) - u^{\ast}(x)}_{\tilde{u}^{\ast}(x)}$, we have
\begin{equation*} 
(1-\delta(x))^2 [\nabla^2 \varphi(u^{\ast}(x))]^{-1} \preceq [\nabla^2 \varphi(\tilde{u}^{\ast}(x))]^{-1} \preceq (1- \delta(x))^{-2} [\nabla^2 \varphi(u^{\ast}(x))]^{-1}.
\end{equation*}
Multiplying this estimate by  $A$ and $A^{\top}$ on the left and right, respectively we obtain
\begin{equation*} 
(1-\delta(x))^2 A[\nabla^2 \varphi(u^{\ast}(x))]^{-1}A^{\top} \preceq A[\nabla^2 \varphi(\tilde{u}^{\ast}(x))]^{-1}A^{\top} \preceq (1- \delta(x))^{-2} A[\nabla^2 \varphi(u^{\ast}(x))]^{-1}A^{\top}.
\end{equation*}
Using \eqref{eq:exact_derivs} and \eqref{eq:inexact_oracle3}, this estimate leads to
\begin{equation}\label{eq:lm3_proof1}
(1-\delta(x))^2\nabla^2f(x) \preceq H(x) \preceq (1- \delta(x))^{-2}\nabla^2f(x).
\end{equation}
Since $\delta(x) \leq \delta$ and $\delta_3 :=  \frac{\delta}{1-\delta} \in [0, 1)$, we have $(1-\delta(x))^2 \geq (1-\delta)^2 \geq (1-\delta_3)^2$ and $\frac{1}{(1-\delta(x))^2} \leq \frac{1}{(1-\delta)^2} = (1+\delta_3)^2$.
Using these inequalities in \eqref{eq:lm3_proof1}, we obtain the second bound of \eqref{eq:bounds_exam1}.

Next, by the definition of $g(x)$ and $\nabla{f}(x)$, we can derive that
\begin{equation*}
\begin{array}{ll}
\left[\tnorm{ g(x) - \nabla f(x)}_x^{\ast}\right]^2  & =  (\tilde{u}^{\ast}(x) \!-\! u^{\ast}(x))^{\top}{\!\!}A^{\top}\left(A\nabla^2{\varphi}(\tilde{u}^{\ast}(x))^{-1}{\!\!}A^{\top}\right)^{-1}{\!\!}A(\tilde{u}^{\ast}(x) \!-\! u^{\ast}(x)) \vspace{1ex}\\
&\overset{\tiny{(i)}}{\leq}  (\tilde{u}^{\ast}(x)  - u^{\ast}(x))^{\top}\nabla^2{\varphi}(\tilde{u}^{\ast}(x))(\tilde{u}^{\ast}(x)  - u^{\ast}(x)) \vspace{1ex}\\
&= \norm{\tilde{u}^{\ast}(x)  - u^{\ast}(x)}_{\tilde{u}^{\ast}(x)}^2 \leq \delta^2(x)\leq\delta^2,
\end{array}
\end{equation*}
where we use $A^{\top}(AQ^{-1}A^{\top})^{-1}A \preceq Q$ for $Q = \nabla^2{\varphi}(u^{\ast}(x))\succ 0$ in (\textit{i}) (see \cite{TranDinh2012c} for a detailed proof of this inequality).
This expression implies $\tnorm{ g(x) - \nabla f(x)}_x^{\ast} \leq \delta$, the first estimate of \eqref{eq:bounds_exam1}. 

Now, by the definition of $f$ in \eqref{eq:max_func} and of $\tilde{f}$ in \eqref{eq:inexact_oracle3}, respectively, and the optimality condition $\nabla{\varphi}(u^{\ast}(x)) = A^{\top}x$ in \eqref{eq:max_func}, we have
\begin{equation*}
\begin{array}{lcl}
f(x) - \tilde f(x)  & \overset{\tiny \eqref{eq:max_func}, \eqref{eq:inexact_oracle3}}{=} & \big[\iprods{u^{\ast}(x), A^{\top}x} - \varphi(u^{\ast}(x)) \big] -  \big[\iprods{\tilde{u}^{\ast}(x), A^{\top}x} - \varphi(\tilde{u}^{\ast}(x)) \big]  \vspace{1ex}\\
& = & \varphi(\tilde{u}^{\ast}(x)) - \varphi(u^{\ast}(x)) - \iprods{A^{\top}x,  \tilde{u}^{\ast}(x) - u^{\ast}(x)} \vspace{1ex}\\
& = & \varphi(\tilde{u}^{\ast}(x)) - \varphi(u^{\ast}(x)) - \iprod{\nabla \varphi(u^{\ast}(x)), \tilde{u}^{\ast}(x) - u^{\ast}(x)}.
\end{array}
\end{equation*}
Since $\varphi$ is standard self-concordant,  using \cite[Theorem 4.1.7, 4.1.8]{Nesterov2004} we obtain from the last expression that
\begin{align*}
\omega(\norms{\tilde{u}^{\ast}(x) - u^{\ast}(x)}_{u^{\ast}(x)})  \leq f(x) - \tilde f(x) \leq  \omega_{\ast}(\norms{\tilde{u}^{\ast}(x) - u^{\ast}(x)}_{u^{\ast}(x)}),
\end{align*}
which leads to 
\begin{equation*}
0\leq\omega\left(\tfrac{\delta(x)}{1+\delta(x)}\right)  \leq f(x) - \tilde f(x) \leq  \omega_{\ast}\left(\tfrac{\delta(x)}{1-\delta(x)}\right)\leq \omega_{\ast}\left(\tfrac{\delta}{1-\delta}\right), 
\end{equation*}
provided that $\delta(x)<1$. 
This condition leads to $\vert  f(x) - \tilde{f}(x)\vert \leq  \omega_{\ast}\left(\tfrac{\delta}{1-\delta}\right) =: \varepsilon$.

Using Lemma \ref{le:inexact_oracle_bounds} with $\varepsilon:=\omega_{\ast}\left(\frac{\delta}{1-\delta}\right)$ and $\delta_2 := \delta$, and $\delta_3$ defined above, we conclude that $(\tilde{f}, g, H)$ given by \eqref{eq:inexact_oracle3} is a $(\delta_0,\delta_1)$-global inexact oracle of $f$, where $\delta_0$ and $\delta_2$ are computed from Lemma~\ref{le:inexact_oracle_bounds}. 
Since $2\delta_2+\delta_3<1$ is required in Lemma \ref{le:inexact_oracle_bounds}, by a direct numerical calculation, we obtain $\delta\in [0,0.292]$.

From the optimality condition of \eqref{eq:max_func_arg} we have $\nabla{\varphi}(u^{\ast}(x)) - A^{\top}x = 0$.
Let $r(x) := \nabla{\varphi}(\tilde{u}^{\ast}(x)) - A^{\top}x$.
Then, using the self-concordance of $\varphi$, by \cite[Theorem 4.1.7]{Nesterov2004}, we have
\begin{equation*}
\tfrac{\norms{\tilde{u}^{\ast}(x) - u^{\ast}(x)}_{u^{\ast}(x)}^2}{1 + \norms{\tilde{u}^{\ast}(x) - u^{\ast}(x)}_{u^{\ast}(x)}} \overset{\tiny{(a)}}{\leq} \iprods{\nabla{\varphi}(\tilde{u}^{\ast}(x)) - \nabla{\varphi}(u^{\ast}(x)), \tilde{u}^{\ast}(x) - u^{\ast}(x)} = \iprods{r(x), \tilde{u}^{\ast}(x) - u^{\ast}(x)},
\end{equation*}
where we use \cite[Theorem 4.1.7]{Nesterov2004} in (\textit{a}).
Since $\delta(x) := \norms{\tilde{u}^{\ast}(x) - u^{\ast}(x)}_{\tilde{u}^{\ast}(x)}$, by the Cauchy-Schwarz inequality, we can show that $\frac{\delta(x)^2}{1 + \delta(x)} \leq \norms{r(x)}^{\ast}_{\tilde{u}^{\ast}(x)}\delta(x)$, which leads to 
$\frac{\delta(x)}{1+\delta(x)} \leq \Vert r(x)\Vert^{*}_{\tilde{u}^{*}(x)}$.

Finally, we assume that $\Vert r(x)\Vert^{*}_{\tilde{u}^{*}(x)} \leq \frac{\delta}{1+\delta}$ for some $\delta > 0$ as stated in Lemma \ref{le:est_A}.
Using this condition and the last inequality  $\frac{\delta(x)}{1+\delta(x)} \leq \Vert r(x)\Vert^{*}_{\tilde{u}^{*}(x)}$ we have $\frac{\delta(x)}{1+\delta(x)} \leq \frac{\delta}{1+\delta}$, which implies that $\delta(x) \leq \delta$.
\Eproof

\beforesubsec
\subsection{\bf The proof of Lemma \ref{le:key_est_for_prox_nt_scheme_ine}: Key estimate for local convergence analysis}\label{apdx:le:key_est_for_prox_nt_scheme_ine}
\aftersubsec
First, recall that $\nu^k\in g(x^k)+H(x^k)(z^k - x^k)+\partial R(z^k)$ from \eqref{eq:inexact_subp}.
Using the definition of $\Pc_x$ from \eqref{eq:scaled_prox_oper}, this expression leads to
\begin{equation*}
H(x^k)x^k + \nu^k - g(x^k) \in \partial R(z^k)+H(x^k)z^k~~~~{\iff}~~~~z^k = \mathcal{P}_{x^k}(x^k + [H(x^k)]^{-1}(\nu^k-g(x^k))).
\end{equation*}
Shifting the index from $k$ to $k+1$, the last expression leads to
\begin{equation}\label{opt:quad3}
z^{k+1} = \mathcal{P}_{x^{k+1}}(x^{k+1} + [H(x^{k+1})]^{-1}(\nu^{k+1} - g(x^{k+1}))).
\end{equation}
Next, if we denote by $r_{x^k}(z^k):=g(x^k)+H(x^k)(z^k-x^k)$, then again from \eqref{eq:inexact_subp} and \eqref{eq:scaled_prox_oper}, we can rewrite
\begin{equation}\label{opt:quad4}
{\!\!\!\!}\begin{array}{rrl}
\nu^k-r_{x^k}(z^k) \in \partial R(z^k)  &~~~ \iff ~~~& z^k+[H(x^{k+1})]^{-1}(\nu^k-r_{x^k}(z^k)) \in z^k+[H(x^{k+1})]^{-1}\partial R(z^k)\vspace{1.25ex}\\
&~~~ \iff ~~~& z^k = \mathcal{P}_{x^{k+1}}(z^k+[H(x^{k+1})]^{-1}(\nu^k-r_{x^k}(z^k))).
\end{array}{\!\!\!\!}
\end{equation}
Denote $H_k := H(x^k)$, $f_k':=\nabla f(x^k)$, and $g_k := g(x^k)$ for simplicity. 
By the triangle inequality, we have
\begin{equation}\label{triangle_subinexact}
\lambda_{k+1}=\tnorm{z^{k+1} - x^{k+1}}_{x^{k+1}} \leq  \tnorm{x^{k+1} - z^{k}}_{x^{k+1}} + \tnorm{z^{k+1} - z^{k}}_{x^{k+1}}.
\end{equation}
To upper bound $\lambda_{k+1}$, we upper bound each term of \eqref{triangle_subinexact} as follows.

(a)~For the first term $\tnorm{x^{k+1}-z^k}_{x^{k+1}}$ of \eqref{triangle_subinexact}, since $f$ is standard self-concordant, by \eqref{eq:norm_relation} and \cite[Theorem 4.1.5]{Nesterov2004}, we have
\begin{equation*} 
\begin{array}{lcl}
\tnorm{x^{k+1}-z^k}_{x^{k+1}} &\overset{\tiny\eqref{eq:norm_relation}}{\leq} & (1+\delta_3)\norms{x^{k+1}-z^k}_{x^{k+1}}  \vspace{1ex}\\
&\overset{\tiny{\text{\cite[Theorem 4.1.5]{Nesterov2004}}}}{\leq}& \frac{1}{1-\norms{x^{k+1}-x^k}_{x^k}}\cdot (1+\delta_3)\norms{x^{k+1}-z^k}_{x^{k}} \vspace{1ex}\\
&\overset{\tiny{\eqref{eq:norm_relation}}}{\leq}&  \frac{1}{1-\tfrac{1}{1-\delta_3}\tnorm{x^{k+1}-x^k}_{x^k}} \cdot \frac{(1+\delta_3)\tnorm{x^{k+1}-z^k}_{x^{k}}}{1-\delta_3}   \vspace{1ex}\\
&= & \frac{(1+\delta_3)\tnorm{x^{k+1}-z^k}_{x^{k}}}{1-\delta_3 - \tnorm{x^{k+1}-x^k}_{x^k}}.
\end{array}
\end{equation*}
Since $\alpha_k \in [0, 1]$, $\lambda_k := \tnorm{d^k}_{x^k}$, and $x^{k+1} := x^k + \alpha_k(z^k - x^k) = x^k + \alpha_kd^k$ due to \eqref{eq:prox_nt_scheme}, we have
\begin{equation}\label{eq:norm_lbd}
\left\{\begin{array}{ll}
\tnorm{x^{k+1}-z^k}_{x^k} &= \tnorm{(1-\alpha_k)(z^k - x^k) }_{x^k} = (1-\alpha_k)\tnorm{d^k}_{x^k} =  (1-\alpha_k)\lambda_k, \vspace{1.2ex}\\
\tnorm{x^{k+1} - x^k}_{x^k} &= \tnorm{x^k + \alpha_k(z^k - x^k) - x^k}_{x^k} = \alpha_k\tnorm{z^k - x^k}_{x^k} = \alpha_k\tnorm{d^k}_{x^k} = \alpha_k\lambda_k.
\end{array}\right.
\end{equation}
Substituting \eqref{eq:norm_lbd} into the last estimate, we obtain
\begin{equation}\label{inexact_subest0}
\tnorm{x^{k+1}-z^k}_{x^{k+1}} \leq \frac{(1+\delta_3)(1-\alpha_k)\lambda_k}{1-\delta_3 - \alpha_k\lambda_k}.
\end{equation}

(b)~For the second term $\tnorm{z^{k+1} - z^{k}}_{x^{k+1}}$ of \eqref{triangle_subinexact}, using \eqref{opt:quad3}, \eqref{opt:quad4}, the triangle inequality in (\textit{i}), and the nonexpansiveness of the scaled proximal operator $\mathcal{P}_{x}$ from \eqref{eq:nonexpansiveness}, we can show that
\rvnn{
\begin{equation}\label{inexact_subest1}
{\!\!\!\!\!\!\!}\begin{array}{lcl}
\tnorm{z^{k+1} - z^{k}}_{x^{k+1}} {\!\!\!\!}& \overset{\tiny\eqref{opt:quad3},\eqref{opt:quad4}}{=} {\!\!\!}& \tnorm{\mathcal{P}_{x^{k+1}}(x^{k+1} + H_{k+1}^{-1}(\nu^{k+1}-g_{k+1}))-\mathcal{P}_{x^{k+1}}(z^k+H_{k+1}^{-1}(\nu^k-r_{x^k}(z^k)))}_{x^{k+1}}\vspace{1ex}\\
& \overset{\tiny\eqref{eq:nonexpansiveness}}{\leq} & \tnorm{(x^{k+1} + H_{k+1}^{-1}(\nu^{k+1}-g_{k+1})-(z^k+H_{k+1}^{-1}(\nu^k - r_{x^k}(z^k)))}_{x^{k+1}}\vspace{1.25ex}\\
& \overset{\tiny\eqref{eq:weighted_norm}}{=} & \tnorm{(H_{k+1}-H_k)(x^{k+1} - z^k)-(g_{k+1}-g_k-H_k(x^{k+1}-x^k)) + (\nu^{k+1}-\nu^k)}_{x^{k+1}}^{\ast} \vspace{1ex}\\
& \overset{\tiny(i)}{\leq}  & \underbrace{ \tnorm{(H_{k+1}-H_k)(x^{k+1} - z^k)-(g_{k+1}-g_k-H_k(x^{k+1}-x^k))}_{x^{k+1}}^{\ast} }_{[\Tc_1]} \vspace{1ex}\\
& & + {~} \underbrace{ \tnorm{\nu^{k+1}-\nu^k}_{x^{k+1}}^{\ast} }_{[\Tc_2]}.
\end{array}{\!\!\!\!}
\end{equation}}
To further estimate the last term $[\Tc_2]$ of \eqref{inexact_subest1}, we have
\begin{equation*}
\begin{array}{ll}
\tnorm{\nu^{k}}_{x^{k+1}}^{\ast} &\overset{\tiny{\eqref{eq:norm_relation}}}{\leq} \frac{1}{1-\delta_3}\norms{\nu^k}_{x^{k+1}}^{\ast} \overset{\text{\tiny{\cite[Theorem 4.1.6]{Nesterov2004}}}}{\leq} \frac{\norms{\nu^k}_{x^{k}}^{\ast} }{(1-\delta_3)(1-\norms{x^{k+1}-x^k}_{x^k})}\vspace{1ex}\\
&\overset{\tiny{\eqref{eq:norm_relation}}}{\leq} \frac{(1+\delta_3)\tnorm{\nu^k}_{x^k}^{\ast}}{(1 - \delta_3)\left(1 - \frac{1}{1-\delta_3}\tnorm{x^{k+1}-x^k}_{x^k}\right) } \vspace{1ex}\\
&\overset{\tiny\eqref{eq:norm_lbd}}{=} \frac{(1 + \delta_3)}{(1-\delta_3 - \alpha_k\lambda_k)}\tnorm{\nu^k}_{x^k}^{\ast}.
\end{array}
\end{equation*}
Utilizing this estimate and the triangle inequality, we can estimate the  term $[\Tc_2]$ of \eqref{inexact_subest1} as
\begin{equation}\label{inexact_subest2}
{\!\!\!\!}\begin{array}{lll}
[\Tc_2] &:= \tnorm{\nu^{k+1} \!-\! \nu^{k}}_{x^{k+1}}^{\ast}  &\leq  \tnorm{\nu^{k+1}}_{x^{k+1}}^{\ast} + \tnorm{\nu^{k}}_{x^{k+1}}^{\ast} \vspace{1ex}\\
& \leq \tnorm{\nu^{k+1}}_{x^{k+1}}^{\ast} + \frac{(1 + \delta_3)}{(1-\delta_3 - \alpha_k\lambda_k)}\tnorm{\nu^k}_{x^k}^{\ast} &\overset{\eqref{eq:inexact_subp}}{\leq} \delta_4\lambda_{k+1}+\frac{(1 + \delta_3)\delta_4\lambda_k}{(1-\delta_3 - \alpha_k\lambda_k)}.
\end{array}{\!\!\!\!}
\end{equation}
Now, using triangle inequality, we can split the term $[\Tc_1]$ of  \eqref{inexact_subest1} as
\begin{equation}\label{inexact_subest3}
{\!\!\!\!}\begin{array}{ll}
[\Tc_1] &:= \tnorm{(H_{k+1}-H_k)(x^{k+1} - z^k)-(g_{k+1}-g_k-H_k(x^{k+1}-x^k))}_{x^{k+1}}^{\ast}\vspace{1.5ex}\\
&\leq  \tnorm{H_{k+1}(x^{k+1}-z^k)}_{x^{k+1}}^{\ast} + \tnorm{H_k(x^{k+1} \!-\! z^k)}_{x^{k+1}}^{\ast} + \tnorm{f_{k+1}' - g_{k+1}}^{\ast}_{x^{k+1}} + \tnorm{{f_k'-g_k}}^{\ast}_{x^{k+1}} \vspace{1.5ex}\\
&+~ \tnorm{f_{k+1}'-f_k'-\nabla^2 f(x^k)(x^{k+1}-x^k)}^{\ast}_{x^{k+1}} + \tnorm{(H_k-\nabla^2 f(x^k))(x^{k+1}-x^k)}^{\ast}_{x^{k+1}}.
\end{array}{\!\!\!\!}
\end{equation}
In addition, using the left-hand side inequality in the first line of \eqref{eq:key_bound3} with $x := x^k$ and $y := x^{k+1}$, we have
\begin{equation}\label{eq:key_bound3_n}
\tnorm{\cdot}_{x^{k+1}}^{*} \overset{\tiny\eqref{eq:key_bound3}}{\leq} \frac{(1+\delta_3)}{(1-\delta_3 - \tnorm{x^{k+1}-x^k}_{x^k})}\tnorm{\cdot}_{x^k}^{*} \overset{\tiny\eqref{eq:norm_lbd}}{=} \frac{(1+\delta_3)}{(1-\delta_3 - \alpha_k\lambda_k)}\tnorm{\cdot}_{x^k}^{*}.
\end{equation}
To estimate each term of \eqref{inexact_subest3}, we first note that
\begin{equation}\label{eq:s3_term1}
 \tnorm{H_{k+1}(x^{k+1}-z^k)}_{x^{k+1}}^{\ast} \overset{\tiny\eqref{eq:weighted_norm}}{=}  \tnorm{x^{k+1}-z^k}_{x^{k+1}} \overset{\tiny\eqref{inexact_subest0}}{\leq} \frac{(1+\delta_3)(1-\alpha_k)\lambda_k}{1-\delta_3 - \alpha_k\lambda_k}.
\end{equation}
Second, using \eqref{eq:key_bound3_n} and $\tnorm{H_kd^k}_{x^k}^{\ast} = \tnorm{d^k}_{x^k} = \lambda_k$, we can show that
\begin{equation}\label{eq:s3_term2}
\begin{array}{ll}
\tnorm{H_k(x^{k+1} - z^k)}_{x^{k+1}}^{\ast} &= (1-\alpha_k)\tnorm{H_kd^k}_{x^{k+1}}^{\ast} \vspace{1ex}\\
& \overset{\tiny\eqref{eq:key_bound3_n}}{\leq} \frac{(1+\delta_3)(1-\alpha_k)}{(1- \delta_3 - \tnorm{x^{k+1} - x^k}_{x^k})}\tnorm{H_kd^k}_{x^k}^{\ast} \vspace{1ex}\\
&\overset{\tiny\eqref{eq:s3_term1}}{\leq}  \frac{(1+\delta_3)(1-\alpha_k)\lambda_k}{(1- \delta_3 - \alpha_k\lambda_k)}.
\end{array}
\end{equation}
Third, by \eqref{eq:local_inexact_oracle} and \eqref{eq:key_bound3_n}, we have
\begin{equation}\label{eq:s3_term3}
\left\{\begin{array}{ll}
\tnorm{f_{k+1}' - g_{k+1}}^{\ast}_{x^{k+1}} &\overset{\tiny\eqref{eq:local_inexact_oracle}}{\leq} \delta_2\vspace{1ex}\\
\tnorm{{f_k'-g_k}}^{\ast}_{x^{k+1}} &\overset{\tiny\eqref{eq:key_bound3_n}}{\leq} \frac{(1+\delta_3)}{(1- \delta_3 - \alpha_k\lambda_k)}\tnorm{{f_k'-g_k}}^{\ast}_{x^{k}}
\overset{\tiny\eqref{eq:local_inexact_oracle}}{\leq} \frac{(1+\delta_3)\delta_2}{(1- \delta_3 - \alpha_k\lambda_k)}.
\end{array}\right.
\end{equation}
Fourth, utilizing \eqref{eq:key_bound3_n} and \cite[Theorem 4.1.14]{Nesterov2004}, we can show that
\begin{equation}\label{eq:s3_term4}
{\!\!\!\!}\begin{array}{lcl}
\tnorm{f_{k+1}'-f_k'-\nabla^2 f(x^k)(x^{k+1}-x^k)}^{\ast}_{x^{k+1}} 
&\overset{\tiny\eqref{eq:norm_relation}}{\leq} &  \frac{1}{(1-\delta_3)}\norms{f_{k+1}'-f_k'-\nabla^2 f(x^k)(x^{k+1}-x^k)}^{\ast}_{x^{k+1}} \vspace{1ex}\\
&{\!\!\!\!\!\!\!\!\!\!\!\!\!}\overset{\tiny\text{\cite[Theorem 4.1.14]{Nesterov2004}}}{\leq} {\!\!\!\!}&  \frac{1}{(1-\delta_3)}\left(\frac{\norms{x^{k+1}-x^k}_{x^k}}{1-\norms{x^{k+1}-x^k}_{x_k}}\right)^2 \vspace{1ex}\\
&\overset{\tiny\eqref{eq:norm_relation}}{\leq} &   \frac{1}{(1-\delta_3)}\left(\frac{\tnorm{x^{k+1}-x^k}_{x^k}}{1-\delta_3 - \tnorm{x^{k+1}-x^k}_{x_k}}\right)^2 \vspace{1ex}\\
&\overset{\tiny\eqref{eq:norm_lbd}}{=}  &  \frac{1}{(1-\delta_3)}\left(\frac{\alpha_k\lambda_k}{1-\delta_3 - \alpha_k\lambda_k}\right)^2.
\end{array}
\end{equation}
Fifth, employing the second inequality of \eqref{eq:key_bound3} with $x := x^k$, $y := x^{k+1}$, and $v := x^{k+1} - x^k$, we can show that
\begin{equation}\label{eq:s3_term5}
\begin{array}{ll}
\tnorm{(H_k-\nabla^2 f(x^k))(x^{k+1}-x^k)}^{\ast}_{x^{k+1}} 
&\overset{\tiny\eqref{eq:key_bound3}}{\leq} \frac{(2\delta_3 + \delta_3^2)}{(1-\delta_3)(1 -\delta_3 - \tnorm{x^{k+1} - x^k}_{x^k})} \tnorm{x^{k+1} - x^k}_{x^k} \vspace{1ex}\\
&\overset{\tiny\eqref{eq:norm_lbd}}{=} \frac{(2 + \delta_3)\delta_3\alpha_k\lambda_k}{(1-\delta_3)(1 -\delta_3 -  \alpha_k\lambda_k)}.
\end{array}
\end{equation}
Finally, substituting \eqref{eq:s3_term1}, \eqref{eq:s3_term2}, \eqref{eq:s3_term3}, \eqref{eq:s3_term4}, and \eqref{eq:s3_term5} into \eqref{inexact_subest3}, we can upper bound $[\Tc_1]$ as
\begin{equation*}
\begin{array}{ll}
[\Tc_1] &\leq  \frac{(1+\delta_3)(1-\alpha_k)}{1-\delta_3 - \alpha_k\lambda_k}\lambda_k 
+ \frac{(1 + \delta_3)(1-\alpha_k)}{(1-\delta_3 - \alpha_k\lambda_k)}\lambda_k + \delta_2 
+ \frac{(1 + \delta_3)\delta_2}{(1-\delta_3 - \alpha_k\lambda_k)}\vspace{1.5ex}\\
&+~ \frac{1}{1-\delta_3}\cdot\left(\frac{\alpha_k\lambda_k}{1-\delta_3 - \alpha_k\lambda_k}\right)^2 
+ \frac{(2+\delta_3)\delta_3}{1-\delta_3}\cdot \frac{\alpha_k\lambda_k}{1-\delta_3 - \alpha_k\lambda_k}.
\end{array}
\end{equation*}
Plugging this upper bound of $[\Tc_1]$ and the upper bound of $[\Tc_2]$ from \eqref{inexact_subest2} into \eqref{inexact_subest1}, we obtain
\begin{equation*}
\begin{array}{ll}
\tnorm{z^{k+1} - z^{k}}_{x^{k+1}} &\leq  \frac{2(1+\delta_3)(1-\alpha_k)}{1-\delta_3 - \alpha_k\lambda_k}\lambda_k 
+ \delta_2 
+ \frac{(1 + \delta_3)\delta_2}{(1-\delta_3 - \alpha_k\lambda_k)}  + \frac{1}{1-\delta_3} \cdot \left(\frac{\alpha_k\lambda_k}{1-\delta_3 - \alpha_k\lambda_k}\right)^2 
+ \frac{(2+\delta_3)\delta_3}{1-\delta_3} \cdot \frac{\alpha_k\lambda_k}{1-\delta_3 - \alpha_k\lambda_k} \vspace{1ex}\\
&+ {~}\delta_4\lambda_{k+1} 
+ \frac{(1 + \delta_3)\delta_4\lambda_k}{(1-\delta_3 - \alpha_k\lambda_k)}.
\end{array}
\end{equation*}
Substituting this estimate and \eqref{inexact_subest0} back into \eqref{triangle_subinexact} we get
\begin{equation*}
\begin{array}{ll}
\lambda_{k+1} &\leq \delta_4\lambda_{k+1} + \delta_2 
+ \frac{3(1+\delta_3)(1-\alpha_k)\lambda_k}{1-\delta_3 - \alpha_k\lambda_k} 
+ \frac{(1 + \delta_3)\delta_2}{(1-\delta_3 - \alpha_k\lambda_k)} 
+ \frac{1}{1-\delta_3} \cdot \left(\frac{\alpha_k\lambda_k}{1-\delta_3 - \alpha_k\lambda_k}\right)^2  \vspace{1.5ex}\\
&+{~} \frac{(2+\delta_3)\delta_3}{1-\delta_3} \cdot \frac{\alpha_k\lambda_k}{1-\delta_3 - \alpha_k\lambda_k} 
+ \frac{(1 + \delta_3)\delta_4\lambda_k}{(1-\delta_3 - \alpha_k\lambda_k)}.
\end{array}
\end{equation*}
Since $0 < 1 - \delta_4 < 1$, rearranging this estimate, we obtain \eqref{eq:key_local_estimate_com_ine}.
\Eproof

\beforesubsec
\subsection{\bf Detailed proofs of the missing technical results in the main text}\label{app:detail_proofs}
\aftersubsec
In this subsection, we provide more details of some missing proofs in the main text.

\vspace{1ex}
\noindent\textbf{(a)~Technical details in the proof of Theorems~\ref{th:local_covergence1} and \ref{th:local_covergence2}:}
Let  us denote the right-hand side of \eqref{eq:key_local_estimate_com_ine} by
\begin{equation*} 
\begin{array}{ll}
H(\alpha_k, \lambda_k, \theta) &
:= \frac{\delta_2}{1-\delta_4}  + \frac{(1+\delta_3)\left[\delta_2 
+ \left(\delta_4 + 3(1-\alpha_k)\right)\lambda_k\right]}{(1-\delta_4)\left( 1 - \delta_3 - \alpha_k\lambda_k\right)}  
+ \frac{\alpha_k(2+\delta_3)\delta_3\lambda_k}{(1-\delta_4)(1-\delta_3)\left( 1 - \delta_3 - \alpha_k\lambda_k\right)} \vspace{1ex}\\
& + {~} \frac{\alpha_k^2\lambda_k^2}{(1-\delta_4)(1-\delta_3)\left( 1 - \delta_3 - \alpha_k\lambda_k\right)^2},
\end{array}
\end{equation*}
where $\lambda_k, \delta_2 \geq 0$, $\alpha_k \in [0, 1]$, $\delta_3, \delta_4 \in [0, 1)$, $\alpha_k\lambda_k + \delta_3 < 1$, and $\theta := (\delta_2, \delta_3, \delta_4)$.

\noindent If $\alpha_k = 1$, then $H(\cdot)$ reduces to
\begin{equation}\label{eq:H_func1}
\begin{array}{ll}
H_1(\lambda_k, \theta) &:= \frac{\delta_2}{1-\delta_4}  
+ \frac{(1+\delta_3)\left(\delta_2 + \delta_4\lambda_k\right)}{(1-\delta_4)\left( 1 - \delta_3 - \lambda_k\right)}  
+ \frac{(2+\delta_3)\delta_3\lambda_k}{(1-\delta_4)(1-\delta_3)\left( 1 - \delta_3 - \lambda_k\right)}  
+ \frac{\lambda_k^2}{(1-\delta_4)(1-\delta_3)\left( 1 - \delta_3 - \lambda_k\right)^2}.
\end{array}
\end{equation}
If $\alpha_k = \frac{1-\delta_4}{(1+\delta)(1+\delta + (1-\delta)\lambda_k)} \in [0, 1]$, then $H(\cdot)$ can be rewritten as
\begin{equation}\label{eq:H_func2}
\begin{array}{ll}
H_2(\alpha_k,\lambda_k, \delta, \theta) &:= \frac{\delta_2}{1-\delta_4}  
+ \frac{(1+\delta_3)\left(\delta_2 + \delta_4\lambda_k\right)}{(1-\delta_4)\left( 1 - \delta_3 - \alpha_k\lambda_k\right)}  
+  \frac{3(1+\delta_3)\lambda_k}{\left( 1 - \delta_3 - \alpha_k\lambda_k\right)}\cdot \frac{2\delta + \delta^2 + \delta_4 + (1-\delta^2)\lambda_k}{(1+\delta)(1+\delta + (1-\delta)\lambda_k)} \vspace{1ex}\\
& + \frac{\alpha_k(2+\delta_3)\delta_3\lambda_k}{(1-\delta_4)(1-\delta_3)\left( 1 - \delta_3 - \alpha_k\lambda_k\right)}  
+ \frac{\alpha_k^2\lambda_k^2}{(1-\delta_4)(1-\delta_3)\left( 1 - \delta_3 - \alpha_k\lambda_k\right)^2}.
\end{array}
\end{equation}
The following lemma is used to prove Theorems~\ref{th:local_covergence1} and \ref{th:local_covergence2} in the main text.

\begin{lemma}\label{le:apdx-monotone}
The function $H_1(\cdot)$ defined by \eqref{eq:H_func1} is monotonically increasing w.r.t. each variable $\lambda_k \geq 0$, $\delta_2 \geq 0$, $\delta_3 \in [0, 1)$, and $\delta_4\in [0, 1)$ such that $\lambda_k + \delta_3 < 1$.

Similarly, for given $\lambda_k > 0$ and $\delta \in [0, 1)$, the function $H_2(\cdot)$ defined by \eqref{eq:H_func2} is monotonically increasing w.r.t. each variable $\alpha_k \in [0, 1]$, $\delta_2 \geq 0$, $\delta_3 \in [0, 1)$, and $\delta_4\in [0, 1)$ such that $\alpha_k\lambda_k + \delta_3 < 1$.
Moreover, if $0 \leq \delta_3, \delta_4 \leq \delta$, then we can upper bound $H_2$ as $H_2(\alpha_k,\lambda_k, \delta, \theta) \leq \widehat{H}_2(\lambda_k, \delta, \delta_2)$, where
\begin{equation*}
\begin{array}{ll}
\widehat{H}_2(\lambda_k, \delta, \delta_2) &:= \frac{\delta_2}{1-\delta}  
+ \frac{(1+\delta)\left(\delta_2 + \delta\lambda_k\right)}{(1-\delta)\left( 1 - \delta - \lambda_k\right)}  
+ \frac{3\lambda_k\left[3\delta + \delta^2 + (1-\delta^2)\lambda_k\right]}{\left( 1 - \delta - \lambda_k\right)\left[1+\delta + (1-\delta)\lambda_k\right]}  + \frac{(2+\delta)\delta\lambda_k}{(1-\delta)^2\left( 1 - \delta - \lambda_k\right)}  
+ \frac{\lambda_k^2}{(1-\delta)^2\left( 1 - \delta - \lambda_k\right)^2}.
\end{array}
\end{equation*}
The function $\widehat{H}_2(\cdot)$ is also monotonically increasing w.r.t. each variable $\delta_2$ and $\lambda_k$.
\end{lemma}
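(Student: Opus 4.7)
\medskip
\noindent\textbf{Proof proposal.} The plan is to reduce both monotonicity claims to a term-by-term inspection, exploiting the structural fact that every summand in $H_1$, $H_2$, and $\widehat{H}_2$ has the form ``non-negative numerator'' divided by ``positive denominator'', where each factor is an elementary rational expression in the variables. Since the set of non-negative, non-decreasing functions is closed under addition, multiplication, and composition with $t\mapsto 1/(c-t)$ on the range $t<c$, it suffices to check the sign of the dependence of each factor on each variable, without ever differentiating.

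First I would handle $H_1$. Under the standing constraint $\lambda_k+\delta_3<1$, each of the four summands is well-defined and non-negative. Monotonicity in $\delta_4\in[0,1)$ is immediate: every summand carries the factor $(1-\delta_4)^{-1}$ and the remaining factors are non-negative and independent of $\delta_4$. Monotonicity in $\delta_2\geq 0$ follows because $\delta_2$ appears only linearly with positive coefficient in the first two terms. For $\lambda_k$, the second term factors as $\frac{(1+\delta_3)}{1-\delta_4}\cdot\frac{\delta_2+\delta_4\lambda_k}{1-\delta_3-\lambda_k}$, whose numerator is non-decreasing and denominator positive and non-increasing in $\lambda_k$; the third and fourth terms are increasing because $\lambda_k\mapsto\frac{\lambda_k}{1-\delta_3-\lambda_k}$ and $\lambda_k\mapsto\frac{\lambda_k^2}{(1-\delta_3-\lambda_k)^2}$ are increasing on $[0,1-\delta_3)$. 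For $\delta_3$, I would rewrite the third summand as $\frac{\lambda_k}{1-\delta_4}\cdot\frac{2\delta_3+\delta_3^2}{(1-\delta_3)(1-\delta_3-\lambda_k)}$, where $2\delta_3+\delta_3^2$ is non-decreasing and both denominator factors are positive and non-increasing; the second and fourth summands are handled analogously.

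Next I would carry out the same program for $H_2$. With $\lambda_k>0$ and $\delta\in[0,1)$ held fixed, $\alpha_k$ enters $H_2$ only through the denominator factor $(1-\delta_3-\alpha_k\lambda_k)$ in the second, third, fourth, and fifth summands, through the numerator $\alpha_k$ of the fourth summand, and through $\alpha_k^2$ in the fifth summand; each such appearance is manifestly monotone increasing in $\alpha_k\in[0,1]$ whenever $\alpha_k\lambda_k+\delta_3<1$. Monotonicity in $\delta_2$, $\delta_4$ follows exactly as for $H_1$. For $\delta_3$ I would split the fourth summand using $(2+\delta_3)\delta_3=\delta_3^2+2\delta_3$ and observe, as before, that the pieces $(1+\delta_3)$, $(2+\delta_3)\delta_3$ are non-decreasing while $(1-\delta_3)^{-1}$ and $(1-\delta_3-\alpha_k\lambda_k)^{-1}$ are also non-decreasing in $\delta_3$.

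Finally, I would deduce the upper bound $H_2\leq\widehat{H}_2$ as a corollary rather than a fresh computation: a direct substitution shows that $\widehat{H}_2(\lambda_k,\delta,\delta_2)$ is exactly $H_2(\alpha_k,\lambda_k,\delta,\theta)$ evaluated at $\alpha_k=1$, $\delta_3=\delta_4=\delta$ (in particular, using $1-\alpha_k=\frac{2\delta+\delta^2+\delta_4+(1-\delta^2)\lambda_k}{(1+\delta)(1+\delta+(1-\delta)\lambda_k)}$ for the third term with $\delta_4=\delta$). Since $0\leq\delta_3,\delta_4\leq\delta$ and $\alpha_k\in[0,1]$, the monotonicity just established gives the inequality. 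The closing monotonicity of $\widehat{H}_2$ in $\lambda_k$ and $\delta_2$ is then a one-line term-by-term check, using once more that every factor $(1-\delta-\lambda_k)^{-1}$, $(1+\delta+(1-\delta)\lambda_k)$, etc., moves in the expected direction. The only real obstacle is notational bookkeeping to keep track of the five summands in $H_2$; there is no analytic difficulty, since no derivative is ever needed.
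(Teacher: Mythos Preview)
Your approach is correct and essentially the same as the paper's: both proceed by term-by-term inspection, noting that each summand is a ratio of non-negative, monotone factors. Your write-up is in fact slightly more complete than the paper's, since you explicitly verify monotonicity of $H_2$ in $\alpha_k$ (the paper's proof only checks $\delta_2,\delta_3,\delta_4$ for $H_2$ despite the statement) and you spell out that $\widehat{H}_2$ is exactly $H_2$ evaluated at $\alpha_k=1$, $\delta_3=\delta_4=\delta$, which is what makes the inequality $H_2\le\widehat{H}_2$ an immediate corollary of the monotonicity.
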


\begin{proof}
We first consider $H_1$ defined by \eqref{eq:H_func1}.
For $\lambda_k \geq 0$, $\delta_2 \geq 0$, $\delta_3 \in [0, 1)$, and $\delta_4\in [0, 1)$ such that $\lambda_k + \delta_3 < 1$, the term $1$ is $\frac{\delta_2}{1-\delta_4}$, which is monotonically increasing w.r.t. $\delta_2$ and $\delta_4$.
The term   $2$  is monotonically increasing w.r.t. $\lambda_k$, $\delta_2$, $\delta_3$, and $\delta_4$.
The terms  $3$ and $4$ are monotonically increasing w.r.t. $\lambda_k$, $\delta_3$, and $\delta_4$.
Consequently, $H_1(\cdot)$ is monotonically increasing w.r.t. $\lambda_k$, $\delta_2$, $\delta_3$, and $\delta_4$.

For fixed $\lambda_k > 0$ and $\delta \in [0, 1)$, we consider $H_2(\cdot)$ defined by \eqref{eq:H_func2}.
Clearly, for $\alpha_k \in [0, 1]$, $\delta_2 \geq 0$, $\delta_3 \in [0, 1)$, and $\delta_4\in [0, 1)$ such that $\alpha_k\lambda_k + \delta_3 < 1$, the term 1 is monotonically increasing w.r.t. $\delta_2$ and $\delta_4$.
The term   $2$  is monotonically increasing w.r.t. $\lambda_k$, $\delta_2$, $\delta_3$, and $\delta_4$.
The terms $3$ and $4$ are monotonically increasing w.r.t. $\delta_3$, and $\delta_4$.
Consequently, $H_2(\cdot)$ is monotonically increasing w.r.t.  $\delta_2$, $\delta_3$, and $\delta_4$.
Using the upper bound $\delta$ of $\delta_3$ and $\delta_4$ into $H_2$, we can easily get $H_2(\alpha_k,\lambda_k, \delta, \theta) \leq \widehat{H}_2(\lambda_k, \delta, \delta_2)$. 
The monotonic increase of $\widehat{H}_2$ w.r.t. $\delta_2$ and $\lambda_k$ can be easily checked directly by verifying each term separately.
\Eproof
\end{proof}

\noindent\textbf{(b)~The detailed proof for Example 1(c) in Subsection~\ref{ex:global}:}
We provide here the detailed proof of the estimate \eqref{eq:exam1c_bound} in Example 1(c) of Subsection~\ref{ex:global}.

Since $f_1(x) = -\ln(x)$, $f_2(x) = \max\{\delta_1x, \delta_1\}$, and $f(x) = f_1(x) + f_2(x)$, we have $\dom{f} = \{x\in\R \mid x > 0\}$.
Moreover, since $\nabla^2{f_1}(x) = H(x) = \frac{1}{x^2}$, the condition $\vert\!\Vert y - x\vert\!\Vert_{x} < 1$ (here we use $\delta_0 = 0$) leads to $\frac{(y-x)^2}{x^2} < 1$, which is equivalent to $-x < y-x < x$, or equivalently, $0 < y < 2x$. Since $y > 0$, the condition $\Vert y - x\Vert_{x} < 1$ is equivalent to $0 < y < 2x$.
In this case, we have 
\begin{equation*}
g_2(x) = \begin{cases}
0 &\text{if}~x < 1\\
\delta_1&\text{if}~x > 1\\
[0, \delta_1] &\text{if}~ x = 1.
\end{cases}
\end{equation*}
Using this expression, one can show that
\begin{equation*}
\begin{array}{ll}
f_2(y) - f_2(x) -  \langle g_2(x), y-x\rangle &= \max\{\delta_1y, \delta_1\} - \max\{\delta_1x, \delta_1\} - \delta_1(y-x) \vspace{1ex}\\
& \leq \begin{cases}
\delta_1 - \delta_1 &\text{if}~0 < x < 1~\text{and}~~0 < y < 1\\
2\delta_1 - \delta_1 &\text{if}~0 < x < 1~\text{and}~~1 < y \leq 2x < 2\\
\delta_1 - \delta_1x - \delta_1(y-x) &\text{if}~x > 1~\text{and}~~0 < y \leq 1\\
\delta_1y - \delta_1x - \delta_1(y-x) &\text{if}~x > 1~\text{and}~~y > 1\\
\delta_1 - \delta_1 - \xi(y-1) &\text{if}~x = 1~\text{and}~~0 < y \leq 1,~~\xi\in [0, \delta_1]\\
2\delta_1 - \delta_1 - \xi(y-1) &\text{if}~x = 1~\text{and}~~1 < y \leq 2x = 2.
\end{cases}\\
&\leq \begin{cases}
0 &\text{if}~0 < x < 1~\text{and}~~0 < y < 1\\
\delta_1 &\text{if}~0 < x < 1~\text{and}~~1 < y \leq 2x < 2\\
\delta_1 &\text{if}~x > 1~\text{and}~~0 < y \leq 1\\
0 &\text{if}~x > 1~\text{and}~~y > 1\\
\delta_1 &\text{if}~x = 1~\text{and}~~0 < y \leq 1,~~\xi\in [0, \delta_1]\\
\delta_1 &\text{if}~x = 1~\text{and}~~1 < y \leq 2x = 2.
\end{cases}
\end{array}
\end{equation*}
In summary, we get $f_2(y) - f_2(x) -  \langle g_2(x), y-x\rangle \leq \delta_1$, which is exactly \eqref{eq:exam1c_bound}.
\Eproof

\beforesubsec
\subsection{\bf Implementation details: Approximate proximal Newton directions}\label{app:sub}
\aftersubsec
When solving for $z^k$ in \eqref{eq:prox_nt_scheme}, we use FISTA \cite{Beck2009}. 
At the $j^{\mathrm{th}}$ iteration of the inner loop, $d^j$ is computed as
\begin{equation*}
d^j := \prox_{\alpha R}\left(x^k+w-\alpha(g(x^k)+H(x^k)w)\right)-x^k,
\end{equation*}
where $w^j :=d^{j-1}+\frac{t_{j-1}-1}{t_{j}}(d^{j-1}-d^{j-2})$. 
By the definition of $\prox_{\alpha R}$, the following relation holds:
\begin{equation*}
\tfrac{1}{\alpha}(w^j - d^j)\in g(x^k)+H(x^k)w^j +\partial R(x^k+d^j),
\end{equation*}
which guarantees that the vector $\nu^k :=\frac{w^j - d^j}{\alpha}+H(x^k)(d^j - w^j)=\left(\frac{\Id_p}{\alpha}-H(x^k)\right)(w^j - d^j)$ satisfies the condition $\nu^k \in g(x^k)+H(x^k)(d^j)+\partial R(x^k+d^j)$. 
In our implementation, this $\nu^k$ was used in \eqref{eq:inexact_subp} to determine whether to accept this $d^k := d^j$ as an inexact proximal Newton direction at the iteration $k$ in \eqref{eq:prox_nt_scheme}.
\Eproof
%
\beforesec
\bibliographystyle{plain}

\end{document}